\title{On the existence property over a predicate}
\author{Alexander Usvyatsov\thanks{ In memory of Zo\'{e} Chatzidakis. \newline \quad    
The author thanks the Austrian Science Foundation (FWF), projects P33895 and  P33420, for supporting this research. 
}}
\newcommand{\Addresses}{{
  \bigskip
  \footnotesize

Alexander Usvyatsov, \textsc{Institut f\"{u}r Diskrete Mathematik und Geometrie,
TU Wien,
1040, Vienna, Austria}

}} 
\newtheorem{theorem}{Theorem}[section]
\newtheorem{definition}[theorem]{Definition}
\newtheorem{example}[theorem]{Example}
\newtheorem{lem}[theorem]{Lemma}
\newtheorem{obs}[theorem]{Observation}
\newtheorem{co}[theorem]{Corollary}
\newtheorem{hyp}[theorem]{Hypothesis}
\newtheorem{remark}[theorem]{Remark}
\newtheorem{con}[theorem]{Conjecture}
\newtheorem{conv}[theorem]{Convention}
\newtheorem{dsc}[theorem]{Discussion}
\newtheorem{qst}[theorem]{Question}
\newtheorem{ft}[theorem]{Fact}
\renewenvironment{proof}{\noindent {\em Proof:}}{\hspace*{1cm}
        \hspace*{\fill}$\rule{1.2ex}{1.4ex}$\medskip} 
\newenvironment{re}{\begin{remark}\rm}{\end{remark}}
\newenvironment{ex}{\begin{example}\rm}{\end{example}} 
\newenvironment{de}{\begin{definition}\rm}{\end{definition}}
\newenvironment{fact}{\begin{ft}\rm}{\end{ft}}
\date{}
\begin{document}

%

\maketitle

\abstract{We prove that in a countable theory $T$ fully stable over a predicate $P$, any complete set $A$ has the existence property. This means that $A$ can be extended to a model of $T$ without changing the $P$-part. In particular, $T$ has the Gaifman property: any model of $P$ occurs as the $P$-part of some model of $T$.} This generalizes results of Lachlan (on stable theories), Hodges (on relatively categorical abelian groups), and Afshordel (on difference fields of characteristic 0).

\section{Introduction}

\subsection{Motivation and Summary}

This paper is concerned with Classification Theory over a predicate, following previous investigations, such as \cite{Pil-cat,PiSh130,Sh234,ShUs322a,usvyatsov2024stability}. In this framework, we are given a complete first order theory $T$, where $P$ is a distinguished unary predicate in its vocabulary. In order to make more sense of the questions below, we make a few additional assumptions (such as: $T$ has quantifier elimination, and $P$ is stably embedded). We discuss these assumptions and their validity in more detail in section 2, Hypothesis \ref{asm:1}. The goal is understanding the complexity and the properties of the class of models of $T$ with a prescribed $P$-part. More precisely, let $\cC$ be the monster model of $T$, and let $T^P$ be the theory of its $P$-part. One question that one can ask is: how complex can be the class of models of $T$ with a given $P$-part? A different (but related) natural question is: is the class of models of $T$ with a prescribed $P$-part always non-empty? And more generally, under what conditions, given $N\models T^P$, does there exist a model of $T$ whose $P$-part equals $N$? Our thesis is  that \emph{stability over $P$} (Definition  \ref{dfn:startypes})  should play an important role in this discussion. Here we provide some evidence for this. 

The results in this paper are partially motivated by the following example. Consider $T = ACFA_0$, the theory of algebraically closed fields of characteristic $0$ with a generic automorphism $\sigma$. Let $\cC$ be a monster model of $T$, and let $P$ be a new unary predicate symbol that is interpreted as $Fix(\sigma)$, the fixed field of $\sigma$. The theory $T$ is simple unstable, but it was already noted by Chatzidakis and Hrushovski in \cite{Chatzidakis1999ModelTO} that in some strong sense, the only source of instability lies within the fixed field (this idea can be formalized in the notion of\emph{ stability over $P$}, which we will define and discuss in detail later).  This observation has led to the realization that stability theoretic techniques can be applied to the class of models of $ACFA_0$ ``over'' $P = Fix(\sigma)$. For example, in \cite{Chatzidakis2020RemarksAT}, Chatzidakis has used inspiration from stability theory in order to construct prime models in the class of $\lam$-saturated models of $T$ with a prescribed $\lam$-saturated (pseudo-finite) fixed field. The author has generalized this result to theories fully stable over $P$ in \cite{usvyatsov2024stability}.

More relevant to our present work here are the results of Afshordel  \cite{MR3343525}. He has shown that every pseudo-finite fields occurs  as the fixed field of some model of $ACFA$, and, moreover, any difference field $A$ whose fixed field $k$ is pseudo-finite, embeds into a model of $ACFA$ whose fixed field is $k$. 

Motivated by these results, we show that similar strong existence properties hold under the assumption of stability over $P$. This generalizes Afshordel's theorem (in characteristic 0) to an arbitrary countable theory $T$, fully stable over a predicate $P$. It also generalizes Lachlan's results  \cite{Lachlan1972} that imply full existence for the case of $T$ countable and stable (and $P$ any unary predicate in $T$), and some of the work of Hodges on Gaifman's Conjecture for abelian groups, e.g., \cite{Hod-cat1}.

We explain the connection with Lachlan's and Hodges' work in Remark \ref{re:lachlan}, Corollary \ref{co:lachlan}, and Corollary \ref{co:hodges}.


In section 8, we discuss $ACFA_0$, and show that indeed  (any completion of) $ACFA_0$ is fully stable over $P$. This confirms that this is an example of our framework, and all techniques developed here apply to it.

 In the last section we briefly discuss the assumptions that Shelah makes in \cite{Sh234} (under which he proves the existence property for a large family of complete sets),  introduce the notion of nulldimensionality of a theory $T$ over a predicate $P$, and connect it to categoricity over $P$. 

%

\subsection{Background}

Let $T$ be a complete first order theory, $P$ a distinguished predicate in its vocabulary. In \cite{gaifman}, Gaifman conjectured that any countable theory $T$ categorical over $P$ (has at most one model with any given prescribed $P$ -part, up to isomorphism over $P$) has the following property: every model of $T^P$ (the theory of $P$) occurs as the $P$-part of some model of $T$. In other words, if $T$ has at most one model with a prescribed $P$-part, it has \emph{exactly} one such model for each model of $T^P$. Gaifman proved this statement for the case that $T$ is rigid over $P$. Specific non-rigid examples (such as $T$ is a theory of an abelian group, $P$ is a subgroup) were investigated by Hodges, e.g., \cite{Hod-cat1, Hod-cat3}.

Shelah established the full statement of Gaifman's Conjecture in \cite{Sh234} modulo some set-theoretic issues that have not been yet completely resolved: specifically, it is currently our understanding that the results in \cite{Sh234} hold for theories that are \emph{provably} categorical over $P$, i.e., the categoricity of $T$ over $P$ is provable in ZFC, or at least the categoricity is preserved in all forcing extensions (we discuss Shelah's theorem in more detail in section 9). In order to derive the statement above, Shelah actually needed to establish a much stronger result. Specifically, let us say that a subset $A$ of the monster model $\cC$ of $T$ has \emph{the existence property over $P$} (or simply the existence property) if there exists $M \models T$  with $A \subseteq M$ and $P^M = P^A$.  In order for $A$ to have the existence property, it is absolutely essential to assume that $P^A$ is ``closed'' within $\cC$ in certain ways; we will summarize this requirement in the definition of \emph{complete sets} later in section 2.1. In order to establish Gaifman's Conjecture, Shelah proved the existence property for a whole class of complete sets of a certain form called ``$\mathcal P^-(n)$-systems'' (and not only over models of $T^P$). 

Given these results, it is natural to ask some  general questions. For example: under what (weaker) assumptions on $T$ does the conclusion of Gaifman's Conjecture hold? One may also want to expand the question to bigger categories of complete sets. For instance, we can ask whether (and when) the existence property is true for all complete sets, or, at least, all complete sets whose $P$-part is ``nice enough'' (e.g., somewhat saturated). The former question is addressed in a recent preprint \cite{ShUs322b}. Here we focus on the latter one. 

\smallskip

We will say that $T$ has the \emph{full existence property over $P$}, or \emph{full existence} for short, if all complete subsets of the monster model of $T$ have the existence property (see Definition \ref{de:fullexist}). One natural question is:

\begin{qst}\label{qst:1}
 Under which assumptions does $T$ have the full existence property?
\end{qst}

Sometimes it is natural to restrict oneself to a certain subclass of complete sets. For example, one may wonder when $T$ has the existence property for the class of all complete sets with a specific $P$-part. Alternatively, one can ask about complete sets whose $P$-part is of a certain form (e.g., is somewhat saturated). 

\begin{qst}\label{qst:2}
Let $N$ be a model of the theory of $P$. Under which conditions on $N$ does $T$ have the existence property for all complete sets $A$ with $P^A = N$? 
\end{qst}
  
%

\bigskip

As we have already mentioned, the existence property (or some variant thereof) is known to be true in many natural examples (that are not categorical over $P$). For example, if $T$ is stable and countable, and $P$ is any unary predicate it its vocabulary, Lachlan's results \cite{Lachlan1972} imply the full existence property. 
If $T$ is the theory of a vector space $V$ over a field $F$ (where $P$ is interpreted as the field), then  $T$ (which will normally be unstable) also has the full existence property. 

A more interesting example (that we have already mentioned earlier) arises from $ACFA$, the theory  of algebraically closed fields with a generic automorphism. 
Afshordel's results in \cite{MR3343525} imply that any completion $T$  of $ACFA$ has full existence over the fixed field $P = Fix(\sigma)$. The particular case of Afshordel's theorem for $ACFA_0$ (the theory of algebraically closed fields of characteristic $0$ with a generic automorphism) has inspired our investigation, since it falls in the category of theories stable over $P$. The main theorem in our paper confirms that any countable theory fully stable over $P$ (satisfying a few ``reasonable'' assumptions; see Hypothesis \ref{asm:1}) has full existence.
It also provides a general procedure for constructing locally constructible models over complete sets, assuming stability over $P$.  

\smallskip

Another (more complex, and much less understood) example for which we know (a version of) the existence property is the class of exponentially closed fields, $ECF$, studied by Kirby and Zilber \cite{KiZil}. 
This class is elementary under the assumption  of CIT, Zilber's Conjecture known as the Conjecture of Intersection of Tori with Varieties (in fact, CIT is equivalent to $ECF$ being an elementary class \cite{KiZil}). Therefore, under CIT, its theory fits in the context of model theory over $P$, with $P$ interpreted as the kernel of the exponential function, to which we will just refer below as ``the kernel''. 

Kirby and Zilber have shown that every $\aleph_0$-saturated model of $Th(\mathbb Z)$, the theory of integers (as a ring), occurs as the multiplicative stabilizer of the kernel of some exponentially closed field $F$. Moreover, $F$ is saturated over its kernel, hence embeds exponentially closed fields $K$ with $Ker(K) = Ker(F)$ of cardinality $|K| \le |F|$. It follows that the existence property (over $P= Ker(exp)$) is true for subsets of exponentially closed fields with an $\aleph_0$-saturated  kernel.

 The theorem above also implies that $ECF$ is (in a certain sense) stable (even superstable) over $P$, hence (under CIT) our discussion is relevant to this theory. However, we do not know whether $ECF$ is \emph{fully} stable over $P$ (what is missing is understanding of complete sets that are not models of $T$, and of relevant types over these sets), so it is not currently clear whether all of the results in this paper apply to it. 

\smallskip

Finally, some general results regarding Question \ref{qst:2} have also been established. For example, Proposition 4.13 in \cite{ShUs322a} says that \emph{any} theory (that satisfies our basic assumptions, Hypothesis \ref{asm:1}, of course) has the existence property over saturated models of the theory of $P$.

\bigskip

Here we address the two questions stated above (Questions \ref{qst:1}, \ref{qst:2}) and investigate the existence property under the assumption of stability over $P$. Specificaly, we prove the following: if $T$ is stable over a certain $N \models T^P$, then every complete set $A$ with $P^A = N$ has the existence property (Theorem \ref{th:primary}). In particular, if $T$ is fully stable over $P$ (all complete sets are stable), then it has the full existence property. 

The proof of Theorem \ref{th:primary} does not require the full strength of the assumption stated above. For example, as we point out in Remark \ref{re:stab_unions}, it is enough to assume that stability (over $P$) of complete sets is preserved under taking unions of arbitrary increasing continuous chains. It would be interesting to investigate these properties and connections further.

\bigskip

The paper is organized as follows. Section 2 contains all the necessary preliminaries, including the assumptions that we make on $T$ and $P$, the definitions of complete sets, stability over $P$, and basic properties of these notions. In section 3 we recall the rank function relevant for stability over $P$. In section 4 we revisit the notion of complete sets, and prove a few simple new characterizations, that  come in handy in our later discussion. Section 5 is devoted to the discussion of minimal types (with respect to the rank discussed in section 3). Section 6 contains definitions and basic properties of the notions of isolation and constructibility relevant for this work. Section 7 is devoted to the proof of the main result, Theorem  \ref{th:primary}.

Section 8 addresses the following natural question: is the assumption of full stability too strong? So far (with the exception of one recent paper of the author \cite{usvyatsov2024stability}) only stability over $P$ of models and systems of models (e.g., \cite{Sh234, ShUs322a, ShUs322b}) has been assumed in the program of Classification Theory over $P$. One reason for this is that negating these assumptions leads to non-structure results \cite{Sh234}, which, according to the Classification Theory guidelines, makes them  reasonable (and justified). At the same time, our results indicate that full stability can be incredibly useful  for various applications. In is therefore advisable to make sure that some natural and motivating examples are indeed fully stable. Here we confirm this for (any completion of) $ACFA_0$. We are positive that there are many more such natural examples. 

Finally, section 9 is devoted to a brief discussion of Gaifman's Conjecture and the Gaifman property. We discuss what we believe is known about the general statement of Gaifman's Conjecture, and compare full stability introduced here with the assumptions of Shelah's  \cite{Sh234}. On the one hand, Shelah's assumption of stability of systems is potentially significantly weaker than full stability. On the other hand, the assumption of ``no two cardinal model'' (that is assumed throughout \cite{Sh234}) is very strong (although it makes perfect sense in the context of Gaifman's Conjecture, since it follows easily from categoricity over $P$). In  section 9, we translate this assumption to the notion of nulldimensionality over $P$, which further exemplifies its strength. 

We should point out that the ongoing work of Shelah and the author (e.g., \cite{ShUs322a, ShUs322b}) is aimed at removing the assumption of no two-cardinal model, and establishing the Gaifman's property just under the  assumption of stability of $\mathcal P^-(n)$-systems. However,  this would not imply the results in the present paper, since here we establish \emph{full} existence (under the stronger assumption of full stability).

\subsection*{Acknowledgements} I am particularly grateful to  Zo\'{e} Chatzidakis. Her questions and our correspondence has motivated this line of research. I would also like to thank Itay Kaplan and Anass Alzurba for motivating and helpful discussions.

\section{The setting}

In this section we set up the framework and introduce some of the basic hypotheses and notions. The reader is referred to \cite{ShUs322a} for a more detailed discussion.

\subsection{Conventions and hypotheses}

\begin{conv}
Let $T$ be a complete first order theory, $P$ a monadic predicate in
its vocabulary. For simplicity, we will assume that the language of $P$ does not contain function symbols (so 
that every subset of a model, containing all the constant symbols, is a substructure; in fact, we will treat all subsets as substructures). In addition, we assume that $T$ implies that $P$ is infinite. 
\end{conv}

\noindent
Let $\mathcal{C} $ be the monster model of $T$. From now on, we assume that all models of $T$ are elementary submodels of $\mathcal{C}$, and all sets are subsets of $\mathcal{C}$. \medskip
 
For $M \models T$, we denote by  $M|_P$ the set $P^M$ viewed as a substructure of $M$. Similarly, for a subset $A \subseteq M$, we denote by $M|_A$ the substructure of $M$ with universe $A$. We write $A\equiv B$ if $Th({\cal C}|_A)=Th({\cal
C}|_B)$.

We also denote $\cC^P = \cC|_P$ and $T^P = Th(\cC|_P)$. For a set $A$, we denote $P^A = A \cap P^\cC$.

When no confusion should arise, we will write $P$ for $P^\cC$. Also, for a set $A$, we will often denote by $A$ both the set and the substructure of $\cC$ with universe $A$. So for example, when we write that $A\cap P^\cC$ is $\lam$-saturated, or just that $A\cap P$ is $\lam$-saturated, we mean that the substructure $\cC|_{A\cap P^\cC}$ is a $\lam$-saturated model of the appropriate theory (if $A \cap P \prec P$, which will be the case in this paper, then the appropriate theory is $T^P$).


%

\bigskip

Throughout the paper, we are going to make the following fundamental assumptions on $T$:

\begin{hyp}\label{asm:1} (\emph{\underline{Hypothesis 1})}

  
\begin{enumerate}
 \item Every type over $P^\mathcal{C}$ is definable. In other words, $P$ is \emph{stably embedded}: subsets of $P^\mathcal{C}$  that are definable  with parameters (in $\mathcal{C}$), are already definable (in $\mathcal{C}$) with parameters in $P^\mathcal{C}$.
\item  In addition, subsets of $P^\mathcal{C}$  that are 0-definable (in $\mathcal{C}$),  are already 0-definable in
$\cC^P$ (modulo $T^P$).
\item $T$ has quantifier elimination (even down to the level of predicates). 
 \end{enumerate}
\end{hyp}

Note that, combining (i) and (ii), one sees that the induced (from $\cC$) structure on $P^\cC$ is precisely the structure of $\cC^P$ (in the theory $T^P$). Indeed, if $\ph(\x,\c)$ defines (in $\cC$) a subset of $P^\cC$, then (by (i)) the same subset is defined by some formula $\theta(\x,\d)$ with $\d \in P$. At the same time, the subset defined in $\cC$ by $\theta(\x,\y)$ is also defined by some $\hat \theta(\x,\y)$ in $\cC^P$; so the original set is defined by $\hat \theta(\x,\d)$ in $\cC^P$. 

In addition, given (i) and (ii) above, since $T$ does not add any new definable sets to $\cC^P$, one can Morleyize $T$ (add a new predicate symbol for each formula in the language) without much cost; that is, Hypothesis 1(iii) above comes in a sense ``for free'' (see Observation 3.6 in \cite{ShUs322a} for more details). Conversely, clause (ii) actually follows immediately from clause (iii), so we could have omitted it. 
\smallskip

These assumptions are obviously very convenient, but there are also ``good'' reasons to assume them. First, they hold in many important and interesting examples. Second, it was shown by Pillay and Shelah in \cite{PiSh130} that these assumptions are in some sense ``justified'' in our context. Specifically, they have shown that, if either Hypotheses (i) or Hypothesis (ii) above fails, one can deduce  non-structure results ($\ge\lambda^+$ models of $T$ with the same $P$-part non-isomorphic over $P$ in every $\lam>|T|$ satisfying certain set-theoretic assumptions). Since we are currently mostly interested in theories $T$ that lie on the ``structure'' side of the classification theory dividing lines, we will  make these assumptions freely. 

At the same time, it is worth noting that, when considering specific examples, it is sometimes difficult to establish (or impossible to assume) clauses (ii) and/or (iii) of the Hypothesis above. At the end of section 7, we discuss this issue in the context of Lachlan's Theorem on full existence for countable stable theories. Specifically, we note that for some applications one may be able to Morleyize $T$, even if Hypothesis 1(ii) fails. For example, if we are interested (as we are here) in full existence, i.e., the existence property for all complete sets, we can work in the Morleyzation $T'$ (which will satisfy the assumptions above), and then deduce certain conclusions on the original $T$ (since the property of completeness is preserved between $T$ and $T'$). However, if one does not want to change $T^P$, the theory of $P$, its models and definable subsets (for example, if one is interested in Gaifman's Conjecture), this approach does not always work. 

\smallskip

Finally, note that $\cC^P$ can be seen as the monster model for the theory $T^P$, and that it follows from Hypothesis 1(iii) that $T^P$ also has QE. 


\subsection{The existence property, completeness, and stability}

The main concept under discussion in this paper is the existence property (see also Definition \ref{de:fullexist} below):

\begin{de}\label{de:existence}
\begin{enumerate}
\item 
	We say that a set $A$ has the \emph{existence property over $P$}, or simply the \emph{existence property} if there exists $M \models T$ such that $A \subseteq M$ and $P^M = P^A$. 
\item
	We say that $A$ has the \emph{full existence property over $P$}, or simply \emph{full existence} if every $B \subseteq A$ with $P^B = P^A$ has the existence property. 
\item
	Let $\lam$ be a cardinal. If, in addition, in clause (i) there is $M$ which is $\lam$-saturated, we say that $A$ has the \emph{$\lam$-existence property over $P$}, or simply the \emph{$\lam$-existence property}. Similarly for full $\lam$-existence.  
\item
	We say that $T$ has the \emph{Gaifman property} if every $N \models T^P$ has the existence property. That is, $T$ has the Gaifman property if every model of $T^P$ occurs as the $P$-part of some model of $T$. 
\end{enumerate}
\end{de}

Given a set $A$, in order for their to be a chance for $M$ as above to exist, $P^A$ should be ``suitable'' for being the $P$-part of a model of $T$. For example, $P^A$ should obviously be itself a model of $T^P$. However, this is not enough. $P^A$ should also be closed under the parameters necessary to define types of tuples of $A$ over $P$. These conditions are summarized in the following definition (see also Fact \ref{obs:complete_characterization}).


\begin{de}\label{6}\label{dfn:complete}
$A\subseteq {\cal C}$ is {\it complete} if for every
formula $\psi(\bar x,\bar y)$ and $\bar b\subseteq A, \models
(\exists\bar x\in P)\psi(\bar x,\bar b)$ implies $(\exists
\bar a\subseteq P\cap A)\models \psi(\bar a,\bar b)$. 
\end{de}

\begin{re}\label{re:lachlan} The notion of completeness was defined by Pillay and Shelah in \cite{PiSh130}, but it had been identified more than a decade earlier by Lachlan \cite{Lachlan1972} as a necessary condition for the existence property. Lachlan referred to this as ``the pair $(P,A)$ has the Tarski-Vaught property'', and proved that, if $T$ is countable and stable, the Tarski-Vaught property implies that $A$ has the existence property. See Corollary \ref{co:lachlan} below. 
 
\end{re}


The following is clear:

\begin{obs}\label{obs:complete}
If $M\prec {\cal C}$ and $P^M\subseteq A\subseteq M$, then
$A$ is complete.
\end{obs}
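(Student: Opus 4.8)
The plan is to deduce completeness of $A$ directly from the elementarity of $M$ in $\cC$, using only that $\bar b$ and the witness we produce live inside $M$, together with the hypothesis $P^M \subseteq A$. No stability or quantifier elimination is needed here; this is purely a Tarski--Vaught argument.

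First I would fix an arbitrary formula $\psi(\bar x,\bar y)$ and a tuple $\bar b \subseteq A$ with $\cC \models (\exists \bar x \in P)\,\psi(\bar x,\bar b)$; the goal is to produce $\bar a \subseteq P^\cC \cap A$ with $\cC \models \psi(\bar a,\bar b)$. Since $A \subseteq M$, we have $\bar b \subseteq M$, so $(\exists \bar x)\big(\bigwedge_i P(x_i) \wedge \psi(\bar x,\bar b)\big)$ is a formula with parameters from $M$ that holds in $\cC$; by $M \prec \cC$ it holds in $M$ as well. Hence there is a tuple $\bar a \in M$ with $\bar a \subseteq P^M$ and $M \models \psi(\bar a,\bar b)$. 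Applying elementarity once more, this time in the other direction, $\cC \models \psi(\bar a,\bar b)$.

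It then remains to check that $\bar a$ lands in $P^\cC \cap A$: each entry of $\bar a$ lies in $P^M = M \cap P^\cC \subseteq P^\cC$, and by the hypothesis $P^M \subseteq A$ we also get $\bar a \subseteq A$, so $\bar a \subseteq P^\cC \cap A = P^A$, which is exactly what the definition of completeness demands.

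There is essentially no obstacle. The only points requiring a little care are reading the relativized quantifier ``$\exists \bar x \in P$'' correctly as the displayed formula above, and keeping the two invocations of elementarity straight: one to push the existential statement down into $M$ so that a witness can be extracted from $P^M$, and one to pull the resulting instance $\psi(\bar a,\bar b)$ back up to $\cC$. The assumption $P^M \subseteq A$ enters precisely (and only) at the last step, to guarantee that the witness belongs to $A$.
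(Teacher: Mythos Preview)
Your argument is correct and is exactly the natural Tarski--Vaught verification one would expect; the paper itself gives no proof, simply declaring the observation ``clear.'' There is nothing to add or correct.
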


The last Observation confirms that in order for a set $A$ to have the existence property, it has to be complete. In this paper we are interested in the converse.



\medskip

The following useful characterization offers another understanding of the notion of completeness (see Observation 4.2 in \cite{ShUs322a}):

\begin{fact}
\label{obs:complete_characterization}
A set $A$ is complete if and only if for every $\bar a\subseteq A$ and
$\psi(\bar x,\bar y)$  the $\psi$-type $tp_\psi (\bar a/P^{\cal C})$ is definable
over $A\cap P^{\cal C}$ and $A\cap P^{\cal C}\prec P^{\cal C}$.
\end{fact}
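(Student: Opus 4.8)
The plan is to prove the two implications separately; throughout, the workhorse is Hypothesis~\ref{asm:1} --- chiefly that $P$ is stably embedded, so that the structure induced on $P^\cC$ from $\cC$ coincides (uniformly in parameters) with that of $\cC^P$ --- which lets me move parameter-definable subsets of $P^\cC$ between $\cC$ and $\cC^P$ without enlarging the parameter set. For the forward direction, assume $A$ is complete. That $A \cap P^\cC \prec P^\cC$ follows from the Tarski--Vaught test: a $\cC^P$-formula over $A \cap P^\cC$ with a solution in $P^\cC$ agrees on $P^\cC$ with a $\cC$-formula $\theta(\bar x, \bar b)$, $\bar b \subseteq A$ (by Hypothesis~\ref{asm:1}), so completeness (Definition~\ref{dfn:complete}) provides a solution in $P \cap A$, which is then a solution of the original formula. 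For definability, fix $\bar a \subseteq A$ and $\psi(\bar x, \bar y)$; by stable embeddedness $tp_\psi(\bar a / P^\cC)$ is definable over $P^\cC$, say by $\phi(\bar y, \bar e)$ with $\bar e \in P^\cC$. The key observation is that
\[
(\exists \bar z \in P)\,(\forall \bar y \in P)\,\bigl(\psi(\bar a, \bar y) \leftrightarrow \phi(\bar y, \bar z)\bigr)
\]
is a first-order sentence with parameters only from $\bar a \subseteq A$, of exactly the form occurring in Definition~\ref{dfn:complete}, and it is true in $\cC$ (witnessed by $\bar e$); hence completeness supplies $\bar e' \subseteq P \cap A = A \cap P^\cC$ for which $\phi(\bar y, \bar e')$ defines $tp_\psi(\bar a/P^\cC)$ over $A \cap P^\cC$.

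For the converse, assume $A \cap P^\cC \prec P^\cC$ and that every such $\psi$-type of a tuple from $A$ over $P^\cC$ is definable over $A \cap P^\cC$. To check completeness, take $\psi(\bar x, \bar y)$ and $\bar b \subseteq A$ with $\models (\exists \bar x \in P)\psi(\bar x, \bar b)$. Applying the definability hypothesis to the tuple $\bar b$ and the flipped formula $\psi^{*}(\bar y, \bar x) := \psi(\bar x, \bar y)$ gives a formula $d(\bar x)$ over $A \cap P^\cC$ with $\models \psi(\bar c, \bar b) \leftrightarrow d(\bar c)$ for all $\bar c \in P^\cC$. Then $(\exists \bar x \in P)\,d(\bar x)$ holds in $\cC$; realizing $\{\bar c \in P^\cC : \models d(\bar c)\}$ inside $\cC^P$ over $A \cap P^\cC$ (again Hypothesis~\ref{asm:1}) and invoking $A \cap P^\cC \prec P^\cC$, I pull a solution $\bar a$ down into $A \cap P^\cC = P \cap A$, which then satisfies $\psi(\bar a, \bar b)$, as required.

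The only recurring technicality is the bookkeeping just used twice: transferring a parameter-definable subset of $P^\cC$ between ``definable in $\cC$'' and ``definable in $\cC^P$'' while keeping parameters inside $A \cap P^\cC$. This is precisely what the remark following Hypothesis~\ref{asm:1} delivers once it is read uniformly in parameters, so it is routine rather than an obstacle. The step I regard as the real content is the trick in the forward direction --- noticing that ``$tp_\psi(\bar a/P^\cC)$ admits a definition with parameters in $P$'' is itself a first-order condition on $\bar a$, so that completeness automatically forces such a definition to live over $A \cap P^\cC$.
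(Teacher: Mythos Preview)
Your proof is correct. The paper does not give its own proof of this Fact (it is cited from \cite{ShUs322a}), but your forward direction coincides with the argument the paper later spells out in Observation~\ref{obs:defcomplete} and Corollary~\ref{co:defcomplete}: apply completeness to the sentence asserting the existence of a defining parameter in $P$, thereby pulling that parameter into $P\cap A$; your converse via the flipped formula and elementarity of $A\cap P^\cC$ in $P^\cC$ is the natural counterpart and is implicit in the paper's Corollary~\ref{co:defcomplete}.
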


Clearly, in order for $A$ to have the $\lam$-existence property, $P^A$ also has to be $\lam$-saturated.


%
%
%

\smallskip

As an easy consequence of Hypothesis \ref{asm:1}, we obtain that types over complete sets of elements ``in $P$'' are, 
in fact, types over $P^A$. Specifically (see Corollary 4.7 in \cite{ShUs322a}):

\begin{fact}\label{fct:typeoverP}
	Let $A$ be a complete set, $p(x)$ a (partial) type over $A$ with $P(x) \subseteq p$. Then $p$ is equivalent 
	to a $T^P$-type $p'$ over $P^A$ with $|p'|=|p|$. 
	
	In particular, if $p$ is finite, then it realized in $P^A$. Similarly, if $|p|<\lam$ and $P^A$ is $\lam$-compact.
\end{fact}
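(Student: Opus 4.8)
The plan is to translate $p$ formula by formula into a $T^P$-type $p'$ over $P^A$, using completeness of $A$ to keep all parameters inside $P^A$, and Hypothesis \ref{asm:1} to pass from the language of $\cC$ to that of $\cC^P$.

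First I would fix a formula $\varphi(x,\bar y)$ and a tuple $\bar a\subseteq A$ with $\varphi(x,\bar a)\in p$. Since $P(x)\in p$, what matters about $\varphi(x,\bar a)$ is only the trace $\varphi(\cC,\bar a)\cap P^\cC$ it cuts out of $P^\cC$. Reading this trace off the relevant $\psi$-type of $\bar a$ over $P^\cC$ (take $\psi(\bar u,v):=\varphi(v,\bar u)\wedge P(v)$), Fact \ref{obs:complete_characterization} --- completeness of $A$ --- gives that it is definable over $A\cap P^\cC=P^A$: there are a formula $d_\varphi$ and a tuple $\bar e\subseteq P^A$ with $\cC\models\varphi(c,\bar a)$ iff $\cC\models d_\varphi(c,\bar e)$, for all $c\in P^\cC$. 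This is the one step with genuine content: stable embeddedness alone (Hypothesis \ref{asm:1}(i)) would only place the defining parameters somewhere in $P^\cC$, whereas completeness forces them into $P^A$.

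Next I would pass to $\cC^P$. The formula $d_\varphi(x,\bar y)$ (with $\bar e$ now turned into free variables $\bar y$), conjoined with $P(x)\wedge\bigwedge_i P(y_i)$, is parameter-free and its solution set in $\cC$ lies in a finite power of $P^\cC$; so by Hypothesis \ref{asm:1}(ii) (equivalently, by the remark after Hypothesis \ref{asm:1}, the structure induced on $P^\cC$ from $\cC$ is exactly that of $\cC^P$) this set is also $0$-definable in $\cC^P$, say by a $T^P$-formula $\hat d_\varphi(x,\bar y)$. Setting $\chi_\varphi(x):=\hat d_\varphi(x,\bar e)$, I get a $T^P$-formula over $P^A$ with $\chi_\varphi(\cC^P)=\varphi(\cC,\bar a)\cap P^\cC$. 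Let $p':=\{\chi_\varphi:\varphi(x,\bar a)\in p\}$ (the conjunct $P(x)$ contributes only a tautology; here $|p'|\le|p|$, with equality easily arranged if wanted, and $p'$ can be taken to be a single formula when $p$ is finite). By construction, $c\in P^\cC$ realizes $p$ in $\cC$ precisely when $c$ realizes $p'$ in $\cC^P$, which is the asserted equivalence; in particular $p$ and $p'$ are simultaneously consistent.

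Finally, the two ``in particular'' clauses follow from $P^A\prec P^\cC$ (also part of Fact \ref{obs:complete_characterization}) together with the fact that $\cC^P$ is a monster model of $T^P$. If $p$ is finite and consistent, take $p'$ finite; it is realized in $\cC^P$, so $\exists x\,\bigwedge p'$ holds in $P^\cC$, hence --- all its parameters lying in $P^A$ --- also in $P^A$, and any witness lies in $P^A$ and realizes $p$. If $|p|<\lam$ and $P^A$ is $\lam$-compact, the same argument applied to each finite subset of $p'$ shows $p'$ (of size $\le|p|<\lam$) is finitely satisfiable in $P^A$, so by $\lam$-compactness $p'$, hence $p$, is realized in $P^A$. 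The only real obstacle is the parameter-tracking in the second paragraph; everything after it is routine bookkeeping with Hypothesis \ref{asm:1}.
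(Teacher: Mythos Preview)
Your argument is correct. The paper does not supply its own proof --- the statement is recorded as a Fact with a reference to Corollary~4.7 in \cite{ShUs322a} and described as an easy consequence of Hypothesis~\ref{asm:1} --- and your line (use Fact~\ref{obs:complete_characterization} to push the defining parameters into $P^A$, then Hypothesis~\ref{asm:1}(ii), via the remark that the induced structure on $P^\cC$ is exactly $\cC^P$, to pass to the language of $T^P$, then $P^A\prec P^\cC$ for the realizability clauses) is exactly the expected one.
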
 


\smallskip

It is also useful to note that, since $T$ has QE, the property of completeness for a set $A$  depends only on its first order theory (as a substructure of $\cC$):

\begin{lem}(Lemma 4.5 in \cite{ShUs322a})\label{lem:complete_QE}\label{7.5}
\begin{itemize}
\item[(i)] If $A_1\equiv A_2$, then $A_1$ is complete iff $A_2$ is
complete.
\item[(ii)] $A$ is complete iff whenever the
sentence $$\theta=:(\forall \bar y)[S(\bar y)\longleftrightarrow (\exists x\in P)
R(x,\bar y)]$$ for quantifier free 
$R,S$ is satisfied in ${\cal C}$, then
$A$ satisfies $\theta$.
%
%
\end{itemize}
\end{lem}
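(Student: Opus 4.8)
The plan is to establish (ii) directly and then obtain (i) as a formal consequence of it. Write $\theta_{R,S}$ for the sentence $(\forall \bar y)\,[\,S(\bar y)\leftrightarrow(\exists x\in P)\,R(x,\bar y)\,]$, with a \emph{single} existential variable $x$. The heart of the matter is to recognize that both the notion ``$A$ is complete'' and the condition ``$\cC\models\theta_{R,S}$ implies $A\models\theta_{R,S}$ for all quantifier-free $R,S$'' are equivalent to one and the same statement, call it $(\star)$: for every quantifier-free $R(x,\bar y)$ with a single distinguished variable $x$ and every $\bar b\subseteq A$, if $\cC\models(\exists x\in P)\,R(x,\bar b)$ then there is $a\in P^A$ with $\cC\models R(a,\bar b)$.

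First I would match up $(\star)$ with the right-hand side of (ii). Given a quantifier-free $R$, the formula $(\exists x)(P(x)\wedge R(x,\bar y))$ is existential, so by Hypothesis \ref{asm:1}(iii) there is a quantifier-free $S(\bar y)$ with $\cC\models\theta_{R,S}$; and in (ii) only the $\theta_{R,S}$ true in $\cC$ are relevant. So the sentences occurring in (ii) are, up to equivalence in $\cC$, indexed by the quantifier-free $R$. Now fix such $R$ and an $S$ with $\cC\models\theta_{R,S}$, and unwind ``$A\models\theta_{R,S}$'', using that (by our conventions) $A$ is a substructure of $\cC$, hence quantifier-free formulas are absolute between $A$ and $\cC$ on tuples from $A$: for $\bar b\subseteq A$ we have $A\models S(\bar b)$ iff $\cC\models(\exists x\in P)R(x,\bar b)$, while $A\models(\exists x\in P)R(x,\bar b)$ iff some $a\in P^A$ satisfies $\cC\models R(a,\bar b)$. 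The implication from the latter to the former is automatic, so $A\models\theta_{R,S}$ holds exactly when the converse implication holds for every $\bar b\subseteq A$; quantifying over all $R$ this is precisely $(\star)$.

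Next I would show $(\star)$ is equivalent to completeness of $A$ in the sense of Definition \ref{dfn:complete}. One direction is immediate: applying the definition of completeness with a one-element tuple $\bar x=x$ and $\psi=R$ yields $(\star)$. For the converse, assume $(\star)$ and verify the definition for an arbitrary $\psi(\bar x,\bar y)$, $\bar x=(x_1,\dots,x_n)$, and $\bar b\subseteq A$ with $\cC\models(\exists\bar x\in P)\psi(\bar x,\bar b)$. By Hypothesis \ref{asm:1}(iii) we may assume $\psi$ is quantifier-free, and then induct on $n$, the case $n=1$ being $(\star)$. For $n\ge 2$, let $R(x_1,\bar y)$ be a quantifier-free formula equivalent over $T$ to the existential formula $(\exists x_2,\dots,x_n\in P)\,\psi(x_1,x_2,\dots,x_n,\bar y)$; then $\cC\models(\exists x_1\in P)R(x_1,\bar b)$, so $(\star)$ gives $a_1\in P^A$ with $\cC\models R(a_1,\bar b)$, i.e. $\cC\models(\exists x_2,\dots,x_n\in P)\,\psi(a_1,x_2,\dots,x_n,\bar b)$. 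Applying the induction hypothesis to the quantifier-free formula $\psi(a_1,x_2,\dots,x_n,\bar y)$ with parameters $a_1$ and $\bar b$ (all in $A$) produces $a_2,\dots,a_n\in P^A$ with $\cC\models\psi(a_1,\dots,a_n,\bar b)$. This proves (ii).

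Finally, (i) is an immediate corollary: by (ii), $A$ is complete if and only if $Th(\cC|_A)$ contains every sentence $\theta_{R,S}$ (for quantifier-free $R,S$) that holds in $\cC$; the set of such sentences depends only on $\cC$, so if $A_1\equiv A_2$, that is $Th(\cC|_{A_1})=Th(\cC|_{A_2})$, then $A_1$ is complete iff $A_2$ is. The only delicate point is the quantifier-elimination bookkeeping in the induction step of (ii) --- one must take care that at each stage the formula whose inner $P$-relativized quantifiers are being collapsed is genuinely existential, which is exactly why $\psi$ is reduced to a quantifier-free form before the induction begins.
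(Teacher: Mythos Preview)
Your argument is correct. Note, however, that the paper does not supply a proof of this lemma at all: it is simply quoted as Lemma~4.5 of \cite{ShUs322a}, so there is no ``paper's own proof'' to compare against here.

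That said, your write-up is a clean self-contained verification. The key move --- isolating the single-variable condition $(\star)$ and then recovering the full definition of completeness by induction on the length of $\bar x$, using QE at each step to collapse the inner $P$-relativized block of existentials to a quantifier-free formula --- is exactly the right way to exploit Hypothesis~\ref{asm:1}(iii). Your derivation of (i) from (ii) is also the intended one: (ii) expresses completeness as membership of a fixed set of sentences (depending only on $\cC$) in $Th(\cC|_A)$, which is manifestly invariant under $A_1\equiv A_2$. The only cosmetic remark is that, when you pass from an arbitrary pair $(R,S)$ with $\cC\models\theta_{R,S}$ to a canonical $S$ determined by $R$, you are implicitly using that any two such $S,S'$ are $\cC$-equivalent and hence (being quantifier-free) $A$-equivalent; this is true and you use it, but it could be stated once rather than left to the reader.
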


%

\smallskip

Let us now recall the relevant notion of type for this context. Note that it is only defined over a complete set.

\begin{de}\label{6.5}\label{dfn:startypes}

Let $A$ be a complete set. 

\begin{itemize}

\item[(i)] Let
$$S_*(A)=\{tp(\bar c/A):P\cap (A\cup \bar c)=P\cap A {\rm\ and}\ A\cup \bar
c\ {\rm is\ complete}\}$$
\item[(ii)] $A$ is \emph{stable over $P$}, or simply \emph{stable}, if for all $A^\prime$
with $A^\prime\equiv A$, we have $|S_*(A^\prime)|\leq |A^\prime|^{|T|}$.
\end{itemize}
\end{de}

\begin{remark}
\begin{enumerate}
\item Even though ``stability over $P$'' is a more appropriate and accurate name for our notion of stability of a set (and the term ``stable set'' exists in literature, and has a different meaning), since we  have only one notion of stability in this article (stability over $P$), we will sometimes omit ``over P'' and simply write ``stable''. 
\item Sometimes we refer to types in  $S_*(A)$ as \emph{complete types over $A$ which are weakly orthogonal to $P$}. 
\end{enumerate}

\end{remark}

%

\begin{obs}\label{8.5}
 Let $A$ be a set and $\c$ a tuple. Then $A$ is complete and $tp(\c/A) \in S_*(A)$ if and only if for every formula $\psi(\x,\b,\z)$ over $A$ we have 
 \[ \models \exists \z \in P\, \psi(\c,\b,\z) \Longrightarrow \exists \bar d \in P\cap  A \, \text{such that} \, \models \psi(\bar c,\b,\d) \]
 \end{obs}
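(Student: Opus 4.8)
The statement is an unwinding of the definitions of completeness and of $S_*(A)$; the only real content is the bookkeeping of which variables play the role of parameters and which are quantified over $P$. The plan is to treat the two directions separately, using the slogan that the displayed condition ``is'' the completeness of $A\cup\bar c$, read off in a way that moves the coordinates of $\bar c$ between the parameter block and the variable block $\bar x$ — once one also knows that $\bar c$ contributes no new elements to $P$.

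For the direction $(\Rightarrow)$, I would assume $A$ is complete and $tp(\bar c/A)\in S_*(A)$, so that $A\cup\bar c$ is complete and $P\cap(A\cup\bar c)=P\cap A$. Given a formula $\psi(\bar x,\bar b,\bar z)$ with $\bar b\subseteq A$ and $\models\exists\bar z\in P\,\psi(\bar c,\bar b,\bar z)$, I would apply Definition \ref{dfn:complete} to $A\cup\bar c$ with the formula obtained from $\psi$ by reading $\bar z$ as the $P$-ranging variables and $\bar b\bar c\subseteq A\cup\bar c$ as the parameter tuple. This produces $\bar d\subseteq P\cap(A\cup\bar c)$ with $\models\psi(\bar c,\bar b,\bar d)$, and since $P\cap(A\cup\bar c)=P\cap A$ we get $\bar d\subseteq P\cap A$, as required.

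For the direction $(\Leftarrow)$, I would extract the three needed facts from the displayed implication by feeding it suitable instances of $\psi$. First, completeness of $A$: given a formula $\chi(\bar u,\bar v)$ and $\bar b\subseteq A$ with $\models\exists\bar u\in P\,\chi(\bar u,\bar b)$, apply the hypothesis to $\psi(\bar x,\bar b,\bar z):=\chi(\bar z,\bar b)$ (in which $\bar x$ is a dummy), obtaining $\bar d\subseteq P\cap A$ with $\models\chi(\bar d,\bar b)$. Second, $P\cap(A\cup\bar c)=P\cap A$: if some coordinate $c_i$ of $\bar c$ lies in $P$, apply the hypothesis to $\psi(\bar x,\bar z):=(x_i=z_1)$ with empty parameter block; since $c_i$ itself witnesses $\models\exists z_1\in P\,(c_i=z_1)$, we get $d\in P\cap A$ with $c_i=d$, so $c_i\in A$, whence $P\cap(A\cup\bar c)\subseteq P\cap A$ (the reverse inclusion being trivial). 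Third, completeness of $A\cup\bar c$: given $\chi(\bar u,\bar v)$ and $\bar e\subseteq A\cup\bar c$ with $\models\exists\bar u\in P\,\chi(\bar u,\bar e)$, split $\bar e$ (after reindexing $\chi$) as $\bar b\bar c'$ with $\bar b\subseteq A$ and $\bar c'$ a subtuple of $\bar c$, let $\psi(\bar x,\bar b,\bar u)$ be the result of replacing each coordinate of $\bar c'$ in $\chi(\bar u,\bar e)$ by the corresponding variable of $\bar x$ (padding $\bar x$ with dummies to the length of $\bar c$), apply the hypothesis with $\bar z:=\bar u$, and use the second fact to see that the resulting $\bar d\subseteq P\cap A$ lies in $P\cap(A\cup\bar c)$. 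Together with the first fact this says $A$ is complete, and with the second fact it says $tp(\bar c/A)\in S_*(A)$.

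I do not expect a serious obstacle; the one point requiring care is the substitution step — legitimately moving coordinates of $\bar c$ between the parameter slot and the $\bar x$ slot of a formula — together with the observation that the displayed condition is postulated uniformly over all formulas $\psi(\bar x,\bar b,\bar z)$, which is exactly what makes the various special instances above admissible. (One could alternatively route the argument through Fact \ref{obs:complete_characterization}, recasting completeness of $A\cup\bar c$ as definability of $\psi$-types over $(A\cup\bar c)\cap P = A\cap P$, but the direct approach above seems cleanest.)
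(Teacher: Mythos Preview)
Your proposal is correct and is precisely the intended unwinding of Definitions \ref{dfn:complete} and \ref{dfn:startypes}; the paper gives no explicit proof, treating the observation as immediate from those definitions. Your careful handling of the $(\Leftarrow)$ direction---extracting completeness of $A$, the equality $P\cap(A\cup\bar c)=P\cap A$, and completeness of $A\cup\bar c$ from three well-chosen instances of $\psi$---is exactly what is implicitly required.
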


\medskip

Now that we have defined complete sets, we can conclude this session with the  definition of the main property that we are after in this paper.

\begin{de}\label{de:fullexist}
 \begin{enumerate}
\item We say that a theory $T$ has the \emph{full existence property}, or just has \emph{full existence}, if every complete set $A$ has the existence property. 
\item Given $N \models T^P$, we say that $T$ has  \emph{full existence over $N$} if every complete set $A$ with $P^A = N$ has the existence property. 
\end{enumerate}

\end{de}

Let us recall one known general example of the full existence property:

\begin{fact}(\cite[4.13]{ShUs322a})\label{fact:satexist}
Let $N$ be a saturated model of $T^P$. Then $T$ has the full existence property over $N$.
\end{fact}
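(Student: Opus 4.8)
The plan is to construct the desired $M$ as an increasing continuous union $A = A_0 \subseteq A_1 \subseteq \cdots \subseteq A_\delta = M$ of complete sets, each with $P$-part equal to $N$, arranged so that $M$ passes the Tarski--Vaught test inside $\cC$; then $M \prec \cC$ with $A \subseteq M$ and $P^M = N = P^A$, as required. Limit stages will cost nothing: a union of an increasing chain of complete sets all having the same $P$-part $N$ is again complete with $P$-part $N$, since any tuple and formula witnessing a failure of completeness already lie in one of the $A_i$. So everything comes down to the successor step, i.e.\ to the following one-point extension lemma: \emph{if $B$ is complete with $P^B = N$ and $\psi(x,\bar b)$ (with $x$ a single variable and $\bar b \subseteq B$) satisfies $\cC \models \exists x\, \psi(x,\bar b)$, then there is $c$ with $\cC \models \psi(c,\bar b)$ such that $B \cup \{c\}$ is complete and $P^{B\cup\{c\}} = N$}. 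Granting this, I would run a standard Henkin-style bookkeeping over the pairs $(\psi,\bar b)$ that arise, setting $A_{i+1} = A_i \cup \{c\}$ for the witness $c$ supplied by the lemma, taking unions at limits, and continuing for a regular cardinal's worth of stages (say $\geq |A| + |N| + |T|$) so that every Tarski--Vaught requirement is eventually met; since no element of $P$ outside $N$ is ever added, $P^M = N$ at the end.

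For the one-point extension lemma I would distinguish two cases. First, if $\cC \models (\exists x \in P)\,\psi(x,\bar b)$, then the set $\{a \in P^\cC : \cC \models \psi(a,\bar b)\}$ is nonempty, and by stable embeddedness (Hypothesis \ref{asm:1}(i)) together with completeness of $B$ --- which is exactly the statement that this set, being (a piece of) $tp_\psi(\bar b/P^\cC)$, is definable over $N$ --- it is a nonempty $N$-definable subset of $P^\cC$; since $N = P^B \prec P^\cC$ it meets $N$, so I may take $c \in N \subseteq B$ and $B \cup \{c\} = B$. This case uses only completeness and Hypothesis \ref{asm:1}; the saturation of $N$ plays no role.

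The substantive case is when every witness of $\psi(x,\bar b)$ lies outside $P$, i.e.\ $\psi(x,\bar b) \vdash \neg P(x)$. Now I must produce $c \notin P$ with $\cC \models \psi(c,\bar b)$ for which $B \cup \{c\}$ remains complete; by Definition \ref{dfn:complete} (equivalently Fact \ref{obs:complete_characterization}) this means that for every formula $\chi(\bar y,x,\bar z)$ and every $\bar b' \subseteq B$ the set $\{\bar z \in P^\cC : \cC \models \chi(\bar b',c,\bar z)\}$ is definable over $N$. The idea is to realize $\psi(x,\bar b)$ by a $c$ whose type over $B \cup P^\cC$ is, in a suitable sense, definable over $N$ (an heir-like extension of the $N$-definable type $tp(\bar b/P^\cC)$). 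Concretely: by completeness of $B$ the $P^\cC$-definable sets $\{\bar z \in P^\cC : \cC \models \chi(\bar b',d,\bar z)\}$, for $d \in B$, are uniformly definable over $N$ --- finitely many formulas, with canonical parameters lying in $N$ --- so the task is to choose $c$ realizing $\psi(x,\bar b)$ such that the analogous sets with $c$ in place of $d$ also have canonical parameters in $N$; the saturation of $N$ (keeping $|B| < |N|$ along the chain) makes this possible, by realizing inside $N$ the needed parameters, which form a type over a subset of $N$ of size $< |N|$. A realization $c$ of the resulting type is the witness. The hard part --- and, I expect, the only real obstacle --- is exactly this: producing, when $\psi \vdash \neg P(x)$, a witness of $\psi$ outside $P$ that does not destroy completeness. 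This is where the saturation of $N$ is essential; without it the lemma can fail, and one needs a genuine structural hypothesis such as stability over $P$ instead (which is the concern of the rest of the paper). The remaining bookkeeping --- the chain length, meeting all Tarski--Vaught requirements, and the management of cardinalities (in particular $|A| \geq |N|$, where the $P$-data that has to be reflected into $N$ is still governed by definability over $N$) --- is routine.
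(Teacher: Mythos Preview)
The paper does not prove this statement: it is recorded as a Fact with a bare citation to \cite[4.13]{ShUs322a}, so there is no ``paper's own proof'' to compare against. I therefore comment only on the internal soundness of your attempt.

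Your overall architecture (a Tarski--Vaught chain of complete sets with fixed $P$-part, reduced to a one-point extension lemma) is reasonable, and your Case~1 is fine. The gap is in Case~2. You write that saturation of $N$ succeeds ``keeping $|B|<|N|$ along the chain'', but this never holds: $N\subseteq B$ at every stage, so $|B|\ge |N|$ from the outset. What you presumably intend is a bound on $|B\setminus N|$, but even that can fail, since the statement covers arbitrary complete $A$ with $P^A=N$, and nothing prevents $|A|>|N|$. Your final sentence concedes the case $|A|\ge|N|$ and declares the bookkeeping ``routine''; it is not. The concrete difficulty is this: to get $tp(c/B)\in S_*(B)$ you must arrange, for \emph{every} finite $\bar b'\in B$ and every $\chi$, that $tp_\chi(c\bar b'/P^{\cC})$ is definable over $N$. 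These are $|B|\cdot|T|$ many constraints, and the witnesses $\bar d_{\chi,\bar b'}\in P$ you need in $N$ depend on $c$. Saturation of $N$ lets you realize types of size $<|N|$, so it gives no direct leverage once $|B|\ge |N|$.

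Even in the favourable regime $|B\setminus N|<|N|$, your argument is only a gesture: you assert that ``the needed parameters \dots\ form a type over a subset of $N$ of size $<|N|$'' without saying what that type is, over which parameters it lives, or why a realization of it inside $N$ actually yields a single $c$ that works simultaneously for all $\bar b'\in B$. Theorem~\ref{co:complete_char}(iv) gives you, for each \emph{finite} collection of $(\chi_i,\bar b'_i)$, some $\bar d_i\in N$ making the finite fragment consistent; the problem is coherence of these choices across all $\bar b'\in B$, and you have not explained the mechanism (automorphism, heir/coheir, resplendence, or otherwise) that achieves it. This is exactly the step that the stability hypothesis handles in the main body of the paper (via locally isolated types, Lemma~\ref{lem:isolated_star}), and it is the step that a proof using only saturation of $N$ must genuinely replace.
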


\section{Stability and rank}

Next, let us recall a notion of rank that ``captures'' stability over $P$ (\cite{PiSh130}, \cite{ShUs322a}). It will play a central role in our arguments. 

\smallskip

\begin{de}\label{R}
For a complete set $A$, a (partial) $n$-type $p(\bar x)$ (with parameters in ${\cal C}$), 
  sets $\Delta _1,\Delta _2$ of formulas $\psi (\bar
x,\bar y)$, and a cardinal $\lambda$, we define when $R^n_A(p,\Delta
_1,\Delta _2,\lambda)\geq \alpha$. We usually omit $n$.

\begin{itemize}
\item[(i)] $R_A(p,\Delta _1,\Delta _2,\lambda)\geq 0$ if $p(\bar x)$ is consistent. 

\item[(ii)] For $\alpha$ a limit ordinal: $R_A(p,\Delta _1,\Delta
_2,\lambda)\geq \alpha$ if $R_A(p,\Delta _1,\Delta
_2,\lambda)\geq \beta$ for every $\beta <\alpha$.

\item[(iii)] For $\alpha =\beta +1$ and $\beta$ even:
For $\mu<\lambda$ and finite $q(\bar x)\subseteq p(\bar x)$ we can
find $r_i(\bar x)$ for $i\leq\mu$ such that;

\begin{itemize}
\item[{1.}] Each $r_i$ is a $\Delta _1$-type over $A$,

\item[{2.}] For $i\not= j, r_i$ and $r_j$ are explicitly contradictory
(i.e. for some $\psi$ and $\bar c$, $\psi (\bar x,\bar c)\in r_i,
\neg\psi(\bar x,\bar c)\in r_j$).

\item[{3.}] $R_A(q(\bar x)\cup r_i(\bar x), \Delta _1,\Delta
_2,\lambda)\geq \beta$ {for all $i$}.
\end{itemize}

\item[(iv)] For $\alpha =\beta +1: \beta$ odd: For 
$\mu<\lambda$ and finite $q(\bar x)\subseteq p(\bar x)$ and $\psi
_i\in \Delta _2, \bar b_i\in A$ ($i\leq \mu$), there are $\bar d_i\in
A\cap P$ such that $R(r_i,\Delta _1,\Delta
_2,\lambda)\geq \beta$ where $r_i=q(\bar x)\cup \{(\forall\bar
z\subseteq P) \left[\psi _i(\bar x,\bar b_i,\bar z)\equiv\Psi_{\psi
_i}(\bar z,\bar d_i)\right] \colon i<\mu\}$, and $\Psi_{\psi _i}$ is as in Observation \ref{obs:defcomplete}.
\end{itemize}

$R^n_A(p,\Delta _1,\Delta _2,\lambda)= \alpha$ if $R^n_A(p,\Delta
_1,\Delta _2,\lambda)\geq \alpha$ but not $R^n_A(p,\Delta
_1,\Delta _2,\lambda)\geq \alpha +1$. $R^n_A(p,\Delta
_1,\Delta _2,\lambda)=\infty$ iff $R^n_A(p,\Delta
_1,\Delta _2,\lambda)\geq \alpha$ for all $\alpha$.
\end{de}

The main case for applications will be $\lambda =2$. Note that the
larger $R^n_A(p,\Delta _1,\Delta _2,\lambda)$, the more evidence there
is for the existence of many types $q(\bar x)\in S_*(A)$ consistent with
$p(\bar x)$.

\medskip 

See section 5 of \cite{ShUs322a} for a detailed discussion of some basic properties of the rank; we recall here only a few. 
\medskip

\begin{fact}\label{rank}(Fact 5.3 in \cite{ShUs322a})
%
For every $p$ there is a finite $q\subseteq p$, such that 
$R_A(p,\Delta _1,\Delta _2, 2)=R_A(q,\Delta _1,\Delta _2, 2)$.
%
%
\end{fact}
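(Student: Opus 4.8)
The plan is a routine finite-character (compactness-style) argument that exploits the key structural feature of Definition \ref{R}: the type $p$ enters the inductive clauses (iii) and (iv) only through its \emph{finite} subsets $q\subseteq p$. First I would record the monotonicity of the rank: if $q\subseteq p$, then $R_A(p,\Delta_1,\Delta_2,2)\le R_A(q,\Delta_1,\Delta_2,2)$ (with the convention $\alpha\le\infty$). This follows by an easy induction on $\alpha$ — for $\alpha=0$ it is just preservation of consistency under passing to a subtype, for limit $\alpha$ it is clause (ii), and for $\alpha=\beta+1$ it is immediate because the requirement imposed on $p$ by clause (iii) (resp.\ (iv)) quantifies over \emph{all} finite $q'\subseteq p$, hence in particular over all finite $q'\subseteq q$. (Monotonicity is in any case among the basic properties of the rank recalled from section 5 of \cite{ShUs322a}.) Consequently $R_A(q,\Delta_1,\Delta_2,2)\ge R_A(p,\Delta_1,\Delta_2,2)$ for every finite $q\subseteq p$, and it remains only to exhibit one finite $q\subseteq p$ with $R_A(q,\Delta_1,\Delta_2,2)\le R_A(p,\Delta_1,\Delta_2,2)$.

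Now write $\alpha=R_A(p,\Delta_1,\Delta_2,2)$. If $\alpha=\infty$, then by monotonicity $R_A(q,\Delta_1,\Delta_2,2)=\infty$ for every finite $q\subseteq p$, so any such $q$ works (and if $p$ itself is finite there is nothing to prove, so one may assume $p$ infinite). So suppose $\alpha$ is an ordinal; then $R_A(p,\Delta_1,\Delta_2,2)\not\ge\alpha+1$. Since $\alpha+1$ is a successor ordinal, whichever of clause (iii) or clause (iv) of Definition \ref{R} governs the step from $\alpha$ to $\alpha+1$ (according to the parity of $\alpha$, with limit ordinals counted as even) must \emph{fail} for $p$. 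By the syntactic form of these clauses, that failure is witnessed by some fixed $\mu<2$ and some fixed finite $q_0\subseteq p$ (together, in the case of clause (iv), with fixed $\psi_i\in\Delta_2$ and $\bar b_i\in A$) for which no admissible family $r_i$ ($i\le\mu$) — equivalently, no admissible choice of the $\bar d_i$ — exists with the required rank $\ge\beta$. But $q_0$ is a finite subset of itself, so exactly the same data witness that clause (iii) (resp.\ (iv)) fails for $q_0$ in place of $p$; hence $R_A(q_0,\Delta_1,\Delta_2,2)\not\ge\alpha+1$, i.e.\ $R_A(q_0,\Delta_1,\Delta_2,2)\le\alpha$. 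Combined with monotonicity this yields $R_A(q_0,\Delta_1,\Delta_2,2)=\alpha=R_A(p,\Delta_1,\Delta_2,2)$, as desired.

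I do not expect a genuine obstacle: the whole proof rests on the trivial but essential observation that a finite subset of a finite subset of $p$ is again a finite subset of $p$, so that ``the clause holds for all finite $q\subseteq p$'' and ``the clause holds for all finite subsets of all finite $q\subseteq p$'' are literally the same condition. The only points that need a little care are the bookkeeping of the even/odd parity convention in Definition \ref{R} — so that one unfolds the correct clause for the successor $\alpha+1$, in particular when $\alpha$ is a limit — and stating monotonicity in the right direction. The argument is manifestly uniform in $\lambda$, so the same proof in fact gives the statement with an arbitrary $\lambda$ in place of $2$.
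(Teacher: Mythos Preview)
Your argument is correct and is exactly the standard finite-character proof one expects here: monotonicity in the right direction, then unfold the failure of $R\ge\alpha+1$ to extract a finite witness $q_0\subseteq p$, and observe that the same witness works for $q_0$ itself. The paper does not supply a proof of this statement at all --- it is recorded as a Fact with a citation to \cite{ShUs322a} --- so there is nothing to compare against; your write-up is precisely the argument behind that citation.
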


\begin{fact}\label{rankeven}(see Fact 5.3 in \cite{ShUs322a}). 
	Let $A$ be complete, $p\in S_*(A)$, $q^* \subseteq p$, and assume
	 $$R^n_A(q^*,\Delta _1,\Delta _2,\lambda)=R^n_A(p,\Delta _1,\Delta _2,\lambda)=k < \infty$$ Then $k$ is even.
\end{fact}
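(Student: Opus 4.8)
The plan is to argue by contradiction. Assume $p\in S_*(A)$ and $R_A(p,\Delta_1,\Delta_2,\lambda)=k<\infty$, and suppose toward a contradiction that $k$ is odd. I will show that then $R_A(p,\Delta_1,\Delta_2,\lambda)\ge k+1$, which is impossible. The mechanism is this: since $k=:\beta$ is odd, the inequality $R_A(p,\Delta_1,\Delta_2,\lambda)\ge\beta+1$ is governed by clause (iv) of Definition \ref{R}, which only asks that the types obtained from a finite $q\subseteq p$ by adjoining the canonical $P$-definitions of formulas $\psi_i(\bar x,\bar b_i,\bar z)\in\Delta_2$ still have rank $\ge\beta$. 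For a type in $S_*(A)$ these $P$-definitions are already \emph{consequences of $p$ itself}, so the resulting type is a subtype of $p$, hence of rank $\ge R_A(p,\dots)=\beta$ by monotonicity. In short, the ``odd step'' (iv) is automatic for members of $S_*(A)$, which forces a finite value of the rank to be even.

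In detail: fix $\mu<\lambda$, a finite $q(\bar x)\subseteq p(\bar x)$, and formulas $\psi_i(\bar x,\bar b_i,\bar z)$ with $\psi_i\in\Delta_2$, $\bar b_i\in A$, for $i<\mu$; we must produce $\bar d_i\in P\cap A$ as required by clause (iv). Pick a realization $\bar c\models p$ (one exists since by Definition \ref{dfn:startypes} $p$ is the type of some tuple over $A$). Since $p\in S_*(A)$, the set $A\cup\bar c$ is complete and $P\cap(A\cup\bar c)=P\cap A$; hence by Fact \ref{obs:complete_characterization}, for each $i<\mu$ the subset $\{\bar z\in P:\models\psi_i(\bar c,\bar b_i,\bar z)\}$ of $P$, being definable with parameters in $A\cup\bar c$, is in fact definable over $(A\cup\bar c)\cap P=P\cap A$, say by $\Psi_{\psi_i}(\bar z,\bar d_i)$ with $\bar d_i\in P\cap A$ (the uniform choice of $\Psi_{\psi_i}$ being Observation \ref{obs:defcomplete}). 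Now each sentence $(\forall\bar z\in P)[\psi_i(\bar c,\bar b_i,\bar z)\equiv\Psi_{\psi_i}(\bar z,\bar d_i)]$ is a formula over $A$ satisfied by $\bar c$, hence lies in the complete type $p=tp(\bar c/A)$; since also $q\subseteq p$, the type $$r:=q(\bar x)\cup\{(\forall\bar z\in P)[\psi_i(\bar x,\bar b_i,\bar z)\equiv\Psi_{\psi_i}(\bar z,\bar d_i)]:i<\mu\}$$ is a subtype of $p$. By monotonicity of the rank in its type argument — immediate from Definition \ref{R}, since enlarging a type only makes the ``for all finite $q\subseteq p$'' quantifier in each successor clause more restrictive, which is precisely what underlies Fact \ref{rank} — we obtain $R_A(r,\Delta_1,\Delta_2,\lambda)\ge R_A(p,\Delta_1,\Delta_2,\lambda)=k=\beta$. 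As $\mu,q,(\psi_i),(\bar b_i)$ were arbitrary, clause (iv) of Definition \ref{R} yields $R_A(p,\Delta_1,\Delta_2,\lambda)\ge k+1$, contradicting $R_A(p,\Delta_1,\Delta_2,\lambda)=k$. Hence $k$ is even. (The subtype $q^*$ in the statement plays no role in this parity argument; one needs only $R_A(p,\dots)=k<\infty$. The hypothesis that $q^*$ — taken finite, via Fact \ref{rank} — has the same rank as $p$ is what gets used downstream to reduce rank computations to a finite subtype.)

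The one point that requires care is the claim ``$r\subseteq p$'', as opposed to the weaker ``$r$ is consistent (with $p$)'': this needs both that a single realization $\bar c\models p$ supplies defining parameters $\bar d_i\in P\cap A$ simultaneously for all $i<\mu$ — which is why one invokes completeness of the \emph{whole} set $A\cup\bar c$, not merely of $A$ together with finitely many elements — and that those defining formulas, being over $A$ and true of $\bar c$, are forced by the complete type $p$. I expect no other real obstacle: once $r\subseteq p$ is established, the parity conclusion is a purely formal consequence of the alternation between the even clause (iii) and the odd clause (iv) in Definition \ref{R}, using only Fact \ref{obs:complete_characterization}, Fact \ref{rank}, and monotonicity of $R_A$.
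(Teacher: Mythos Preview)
The paper does not supply its own proof of this Fact; it is simply cited from \cite{ShUs322a}. Your argument is correct and is the natural one: for $p\in S_*(A)$, the defining formulas demanded in clause (iv) of Definition \ref{R} are already elements of $p$ itself (this is exactly Corollary \ref{co:stardefincluded}/Remark \ref{re:stardefinincluded}), so the auxiliary type $r$ is a subtype of $p$, and monotonicity gives $R_A(r,\dots)\ge k$, forcing $R_A(p,\dots)\ge k+1$. Your remark that $q^*$ plays no role in the parity argument is also right.

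It is worth comparing with the paper's proof of the related Lemma \ref{lem:rankeven} (minimal finite extensions have even rank). There one works only with completeness of $A$, not of $A\cup\bar c$, so one can only conclude (via Theorem \ref{co:complete_char}) that the defining formulas are \emph{consistent} with the finite type $p'$; minimality is then invoked to ensure the rank does not drop when they are adjoined. Your argument for $p\in S_*(A)$ is more direct precisely because the defining formulas lie in $p$ outright, so no minimality hypothesis is needed --- which is exactly the difference between the hypotheses of Fact \ref{rankeven} and Lemma \ref{lem:rankeven}.
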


The following Theorem allows us to make use of the rank under the assumption of stability. 

\begin{theorem}\label{13}\label{thm:stablerank}(\cite{PiSh130}, see also Theorem 5.4 in \cite{ShUs322a})

The following are equivalent:
\begin{itemize}
\item[(i)] $A$ is stable.
\item[(ii)] For every finite $\Delta _1$ and finite $n$ there are some
finite $\Delta _2$ and finite $m$ such that 
$R^n_A(\bar x=\bar x,\Delta _1,\Delta _2,2)\leq m$.
\end{itemize}
\end{theorem}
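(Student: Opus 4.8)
The plan is to establish the equivalence between stability of $A$ and the existence of a finite rank bound by a direct back-and-forth between the combinatorial structure witnessed by the rank and the counting of types in $S_*(A)$.

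\medskip

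\noindent\textit{From (ii) to (i): bounding the number of types.} First I would fix a model $A' \equiv A$ (which is automatically complete by Lemma \ref{lem:complete_QE}, and note $R^n_{A'} = R^n_A$ since the rank, being defined by a first-order-flavored recursion on formulas and parameters, is invariant under $\equiv$ — this should be recorded earlier, or argued in a line). For each type $p \in S_*(A')$ and each finite $\Delta_1$, one uses the finiteness from Fact \ref{rank} to assign to $p$ a finite $\Delta_1$-piece of bounded rank, and then the standard ``local counting'' argument: if $R^n_{A'}(\bar x = \bar x, \Delta_1, \Delta_2, 2) \le m$ then there are at most $|A'|^{|T|}$ many $(\Delta_1,\Delta_2)$-types weakly orthogonal to $P$ over $A'$. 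The point is that a type in $S_*(A')$ is determined by all of its local restrictions (ranging over all finite $\Delta_1$), each of which lives in a set of size $\le |A'|^{|T|}$; taking the product over the $\le |T|$ many choices of $\Delta_1$ keeps us at $|A'|^{|T|}$. The slightly delicate point is that, because of the two alternating clauses (iii) and (iv) in Definition \ref{R}, a ``type'' here is not just a set of $\Delta_1$-formulas but also carries the definitional data $(\forall \bar z \subseteq P)[\psi_i(\bar x, \bar b_i, \bar z) \equiv \Psi_{\psi_i}(\bar z, \bar d_i)]$; one must check that finitely many such choices of $\bar d_i$ suffice at each stage, which is exactly what a bound on the odd steps of the rank gives. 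Once this is in place, $|S_*(A')| \le |A'|^{|T|}$, so $A$ is stable.

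\medskip

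\noindent\textit{From $\neg$(ii) to $\neg$(i): building a tree of types.} For the converse, suppose that for some finite $\Delta_1$ and some $n$, for every finite $\Delta_2$ and every $m$ we have $R^n_A(\bar x = \bar x, \Delta_1, \Delta_2, 2) > m$, i.e.\ the rank is $\ge \omega$ (or $\infty$). I would run the usual tree construction: using clauses (iii) and (iv) with $\lambda = 2$ (so $\mu = 1$, i.e.\ binary splitting), build a binary tree $(p_\eta : \eta \in {}^{<\omega}2)$ of finite partial types, where at even levels we split via clause (iii) (contradictory $\Delta_1$-formulas along $p_{\eta 0}$ vs.\ $p_{\eta 1}$) and at odd levels we insert the definitional formulas from clause (iv), all while keeping $R_A(p_\eta, \Delta_1, \Delta_2, 2)$ large enough to continue — here one chooses $\Delta_2$ and the rank threshold along the way so that the construction never gets stuck, using that the rank is unbounded. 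Passing to a suitable cardinal $\kappa$ (so that $\kappa^{|T|} > \kappa$ fails, e.g.\ $2^{\aleph_0}$-many branches over a set of size $\aleph_0$, after first enlarging $A$ to $A'$ of size $\kappa$ with $A' \equiv A$ if needed), the $2^\kappa$ branches give $2^\kappa$ distinct types; the splitting at even levels guarantees they are pairwise distinct, and the clause-(iv) formulas guarantee each branch extends to a member of $S_*(A')$ — indeed the insertion of the definitional formulas is precisely what forces weak orthogonality to $P$ (compare Observation \ref{8.5}). Hence $|S_*(A')| > |A'|^{|T|}$ and $A$ is unstable.

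\medskip

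\noindent\textit{Main obstacle.} The genuine work is in the $\neg$(ii) $\Rightarrow$ $\neg$(i) direction, and specifically in verifying that every branch of the tree yields a \emph{consistent} type lying in $S_*(A')$ rather than just a consistent partial type: one must check that the accumulated definitional formulas at the odd stages are mutually consistent along a branch and genuinely certify that no new elements of $P$ are needed to witness existential formulas over the branch — i.e.\ that $P \cap (A' \cup \bar c) = P \cap A'$ and $A' \cup \bar c$ is complete for a realization $\bar c$. This is where the precise bookkeeping of the rank definition (the role of $\Psi_{\psi_i}$ from Observation \ref{obs:defcomplete}, and the alternation even/odd) has to be handled carefully; everything else is bookkeeping of the standard Shelah-style rank-vs-counting argument. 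I would also expect to lean on Fact \ref{rank} and Fact \ref{rankeven} to keep the ranks finite and well-behaved during the construction, and to cite \cite{PiSh130} and Theorem 5.4 of \cite{ShUs322a} for the details that are purely routine adaptations.
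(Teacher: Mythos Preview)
The paper does not prove this theorem at all: it is stated as a citation of \cite{PiSh130} (and Theorem~5.4 of \cite{ShUs322a}) and used as a black box. So there is no ``paper's own proof'' to compare against; your proposal is an outline of the argument that lives in those references.

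As an outline of that argument your sketch is broadly on target, with the correct identification of the main issue (the odd-level definitional formulas are what force branches into $S_*$). Two points to tighten. First, in the $\neg$(ii)$\Rightarrow\neg$(i) direction your cardinal arithmetic is muddled: you want $A'\equiv A$ of size $\lambda=\lambda^{|T|}$ and then $2^{\lambda}$ branches, not ``$\kappa^{|T|}>\kappa$ fails''. Second, and more substantively, to land each branch in $S_*(A')$ you must insert definitional formulas for \emph{every} formula $\psi$, not just those in a fixed finite $\Delta_2$; the hypothesis $\neg$(ii) gives unbounded rank for \emph{all} finite $\Delta_2$, so along the tree you enumerate all $\psi$ and at stage $k$ work with a $\Delta_2$ large enough to contain $\psi_0,\dots,\psi_k$. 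You gesture at this (``one chooses $\Delta_2$ \dots\ along the way''), but it is the crux and should be made explicit. Fact~\ref{rankeven} is not needed here; Fact~\ref{rank} is used only on the (ii)$\Rightarrow$(i) side.
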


\begin{fact} (Corollary 5.5 in \cite{ShUs322a})\label{fct:satstable}
	In Definition \ref{6}(iv), it is not necessary to consider all $A' \equiv A$. 
	Specifically, a complete set $A$ is stable if and only if $|S_*(A')| \le |A'|^{|T|}$ for some $A' \equiv A$ saturated, $|A'|>|T|$. 
	
	Moreover, it is enough that for $A'$ as above, $|S_*(A')| < 2^{|A'|}$.
\end{fact}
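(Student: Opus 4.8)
One direction is immediate: if $A$ is stable then, by Definition~\ref{dfn:startypes}(ii), $|S_*(A')|\le|A'|^{|T|}$ for \emph{every} $A'\equiv A$ (and $A'$ is again complete, by Lemma~\ref{lem:complete_QE}), in particular for any saturated one. For the converse the plan is to prove the contrapositive in sharp form: if $A$ is \emph{not} stable, then $|S_*(A')|=2^{|A'|}$ for \emph{every} saturated $A'\equiv A$ with $|A'|>|T|$. This single statement yields the Fact: the ``moreover'' is its contrapositive, and the ``iff'' follows since $|S_*(A')|\le 2^{|A'|}$ always while $|A'|^{|T|}<2^{|A'|}$ for the relevant (arbitrarily large) saturated $A'$.

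So assume $A$ is not stable and fix a saturated $A'\equiv A$ with $\lambda:=|A'|>|T|$. By Theorem~\ref{thm:stablerank} there are a finite $\Delta_1$ and a finite $n$ such that no finite $\Delta_2,m$ satisfy $R^n_A(\bar x=\bar x,\Delta_1,\Delta_2,2)\le m$; equivalently, for every finite $\Delta_2$ this rank is $\ge\omega$. First I would upgrade this to an \emph{infinite} rank: a compactness and Ramsey argument (stretching the binary tree of $\Delta_1$-types of each finite height that the rank $\ge\omega$ provides into one of any prescribed height, cf.\ the proof of Theorem~\ref{thm:stablerank}) yields a finite $\Delta_2\supseteq\Delta_1$ with $R^n_A(\bar x=\bar x,\Delta_1,\Delta_2,2)=\infty$. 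Then I would run the tree construction dictated by Definition~\ref{R}: apply clause~(iii) at the ``even'' stages (splitting a partial type into two explicitly contradictory $\Delta_1$-extensions without dropping the rank below the next even value) and clause~(iv) at the ``odd'' stages (adjoining a definitional schema $(\forall\bar z\in P)[\psi(\bar x,\bar b,\bar z)\equiv\Psi_\psi(\bar z,\bar d)]$ with $\bar d\in A'\cap P$), building a binary tree of consistent partial types over $A'$ of height $\lambda$. The saturation of $A'$ is used at each stage to realize the parameters and small types the construction calls for, and the completeness of $A'$ guarantees the definitions $\Psi_\psi$ can be chosen over $A'\cap P$. Along the way I would interleave a bookkeeping over all pairs $(\psi(\bar x,\bar y,\bar z),\bar b)$ with $\bar b\in A'$, applying clause~(iv) to each, so that every branch, once completed, yields a type in $S_*(A')$ by Observation~\ref{8.5}. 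The $2^{\lambda}$ branches then give $2^{\lambda}$ pairwise contradictory members of $S_*(A')$; since $|S_*(A')|\le 2^{\lambda}$ always, we get $|S_*(A')|=2^{\lambda}$, as wanted.

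The hard part, I expect, is twofold. First, passing from ``many types'' to exactly $2^{\lambda}$: the rank only controls splitting relative to a \emph{finite} $\Delta_1$, so making the tree tall and wide enough to produce $2^{\lambda}$ distinct branches is exactly where the hypothesis that $A'$ is \emph{saturated} of size $>|T|$ gets used --- one spreads the splitting configuration over a linear order of size $\lambda$ with $2^{\lambda}$ cuts realized inside $A'$, as in the classical stability-spectrum computation. Second, making each branch type lie in $S_*(A')$ --- weak orthogonality to $P$ for \emph{every} formula, not just those of $\Delta_2$ --- requires carefully merging the finitary rank bookkeeping with an exhaustive bookkeeping over all formulas and all parameters from $A'$ while keeping the rank from collapsing to $0$; this is the technical core. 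Finally, I would note that all of this is essentially latent in the proof of Theorem~\ref{thm:stablerank}, whose implication (i)$\Rightarrow$(ii) is proved contrapositively by producing, from the failure of the rank bound, a saturated $A'\equiv A$ with $|S_*(A')|=2^{|A'|}$, and that construction already goes through over an arbitrary saturated $A'\equiv A$ of size $>|T|$; so a shorter route is simply to quote it.
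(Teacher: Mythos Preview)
The paper does not prove this statement; it is quoted as Corollary~5.5 of \cite{ShUs322a}, so there is no in-paper argument to compare against. Your outline is the standard route and is correct in shape; in particular, your final paragraph is exactly the point: the Fact is a by-product of the \emph{proof} of Theorem~\ref{thm:stablerank}, whose contrapositive direction (failure of (ii) $\Rightarrow$ failure of (i)) already manufactures $2^{|A'|}$ members of $S_*(A')$ over an \emph{arbitrary} saturated $A'\equiv A$ of cardinality $>|T|$, not merely over one specific such $A'$. That observation alone suffices, and it is presumably why \cite{ShUs322a} records the statement as a corollary of that theorem rather than as an independent result.

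One minor comment on the longer sketch: you invoke ``compactness and Ramsey'' to pass from rank $\ge\omega$ to rank $=\infty$. For ranks of the form in Definition~\ref{R}, whose successor clauses (iii) and (iv) quantify only over finite fragments $q\subseteq p$, this upgrade is automatic and purely combinatorial; no Ramsey-type argument is required. The genuinely delicate steps are the two you correctly flag afterwards: stretching the finitary splitting to produce $2^{\lambda}$ branches inside a saturated $A'$ of size $\lambda$ (this is where saturation is actually used, typically via an order/cut argument rather than a literal tree of height $\lambda$), and interleaving the clause-(iv) bookkeeping over \emph{all} formulas and parameters so that each branch lands in $S_*(A')$ rather than just in $S(A')$.
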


\medskip

We  often omit the superscript and the subscript in the rank $R^n_A$, and write simply $R$ (at least when 
$n$ and $A$ are easily deduced from the context).

\medskip

It  follows (see  Corollary 5.6 in \cite{ShUs322a}) that every type $p \in S_*(A)$ over a stable set $A$ is definable internally in $A$:



\begin{co}\label{14} 

\begin{enumerate}
\item
If $A$ is stable, then for every $\psi (\bar
x,\bar y)\in L(T)$ there is $\Psi _\psi$ in $L(A)$ such that if $p\in
S_*(A)$, then for some $\bar b\subseteq A, \Psi _\psi (\bar
y,\bar b)$ defines $p| \psi$ in $\cC_A$. 

That is, for every $\c \in A$, $\psi(\x,\c) \in p$ if and only if $A \models \Psi _\psi (\bar
c,\bar b)$.
 \item Moreover, if $|A|\geq 2$, then for every $\psi(\x,\y)$, there is a definition $\Psi_\psi(\x,\y)$ as above which works uniformly for all $B \equiv A$ and $p \in S_*(B)$. 
 \end{enumerate}
\end{co}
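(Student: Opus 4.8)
The plan is to imitate the classical proof of definability of types in stable theories, with the rank $R^n_A(\cdot,\Delta_1,\Delta_2,2)$ of Definition~\ref{R} playing the role of local $\psi$-rank and Theorem~\ref{thm:stablerank} supplying the finite bound that replaces stability of a single formula.

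Fix $\psi(\bar x,\bar y)$ with $|\bar x|=n$, let $\Delta_1$ be a finite set of formulas containing $\psi$ (with negations allowed in $\Delta_1$-types), and use Theorem~\ref{thm:stablerank} to obtain a finite $\Delta_2$ and $m<\omega$ with $R^n_A(\bar x=\bar x,\Delta_1,\Delta_2,2)\le m$; since completeness and the (parameter-free) rank depend only on the first-order theory of the set (Lemma~\ref{lem:complete_QE}), for the uniform statement~(2) I would arrange via Fact~\ref{fct:satstable} that the same $\Delta_2,m$ work for all $B\equiv A$. Now fix $p\in S_*(A)$, put $k=R^n_A(p,\Delta_1,\Delta_2,2)\le m$ — even, by Fact~\ref{rankeven} — and choose, by Fact~\ref{rank}, a finite $q=q_p\subseteq p$ with $R_A(q,\Delta_1,\Delta_2,2)=k$. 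The first key point is that for every $\bar c\subseteq A$,
\[
\psi(\bar x,\bar c)\in p \;\Longleftrightarrow\; R_A\bigl(q\cup\{\psi(\bar x,\bar c)\},\Delta_1,\Delta_2,2\bigr)\ge k .
\]
The forward implication is monotonicity of the rank under extension; for the converse, if $\psi(\bar x,\bar c)\notin p$ then $\neg\psi(\bar x,\bar c)\in p$ (as $p$ is complete and $\psi\in\Delta_1$), so $R_A(q\cup\{\neg\psi(\bar x,\bar c)\},\ldots)\ge k$ as well, and then $r_0=\{\psi(\bar x,\bar c)\}$, $r_1=\{\neg\psi(\bar x,\bar c)\}$ are explicitly contradictory $\Delta_1$-types over $A$ witnessing, via clause~(iii) of Definition~\ref{R} with $\mu=1<\lambda=2$, that $R_A(q,\ldots)\ge k+1$, contradicting the choice of $q$. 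Thus membership of $\psi(\bar x,\bar c)$ in $p$ is controlled by a single rank inequality whose data $(q,k,\Delta_1,\Delta_2)$ is fixed once $p$ is, with the parameters of $q$ to be absorbed into the parameter slot of the intended definition.

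It remains to show that, for a fixed finite shape of type, fixed finite $\Delta_1,\Delta_2$, and each $j\le m$, the condition ``$R_A(\cdot,\Delta_1,\Delta_2,2)\ge j$'' is expressible by a first-order formula in the parameters of the type. This I would prove by induction on $j$: the base case $j=0$ is consistency of a finite conjunction; for $j=\beta+1$ I would unwind clause~(iii) or clause~(iv) of Definition~\ref{R}, using that $\lambda=2$ forces a binary split, so clause~(iii) reduces to the finite disjunction over $\chi\in\Delta_1$ of ``$\exists\bar c\;(R_A(q\cup\{\chi(\bar x,\bar c)\})\ge\beta\ \wedge\ R_A(q\cup\{\neg\chi(\bar x,\bar c)\})\ge\beta)$'' and clause~(iv) only adds an existential over $\bar d_i\in A\cap P$ together with the already-available definitions $\Psi_{\psi_i}$ of Observation~\ref{obs:defcomplete}; as $j\le m$ the recursion halts after finitely many steps. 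Feeding $j=k$ and the type $q\cup\{\psi(\bar x,\bar c)\}$ into this yields the formula $\Psi_\psi(\bar y,\bar z)\in L(A)$ and the tuple $\bar b\subseteq A$ (the parameters of $q$) demanded by~(1); for~(2), once $\Delta_2$ and $m$ are common to all $B\equiv A$ the induction produces the same finite list of candidate formulas, and amalgamating them into one scheme is the classical step that needs $|A|\ge 2$ (so that a finite ``choice'' can be coded by a parameter from $B$). The main obstacle is exactly this last induction: verifying that the bounded rank $R_A(\cdot,\Delta_1,\Delta_2,2)$ is genuinely first-order definable — the delicate point being that clauses~(iii)--(iv) quantify over $\Delta_1$-types and tuples \emph{over $A$} rather than over $\cC$, which must be reconciled (e.g.\ by comparing $R_A$ with the monster-model rank on types in $S_*(A)$, or by exploiting $A\cap P\prec P^\cC$ and the fact that only tuples from $A$ are ever substituted); granting this, everything else is bookkeeping on top of Theorem~\ref{thm:stablerank}, Fact~\ref{rank}, and Fact~\ref{rankeven}.
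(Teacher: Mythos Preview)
The paper does not prove this statement itself; it simply refers to Corollary~5.6 of \cite{ShUs322a}. Your rank-based approach is the natural one and is presumably what appears there: the key equivalence $\psi(\bar x,\bar c)\in p \Leftrightarrow R_A(q\cup\{\psi(\bar x,\bar c)\},\{\psi\},\Delta_2,2)\ge k$ is correct (the contrapositive uses that $k$ is even, exactly as you say), and your inductive argument that the bounded rank unwinds to a first-order condition in the parameters---with QE handling the consistency base case and clauses~(iii)--(iv) only quantifying over $A$ and $A\cap P$---does resolve the ``delicate point'' you flag at the end.

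There is, however, a real gap in the passage to a \emph{single} $\Psi_\psi$ independent of $p$. Your defining formula has shape determined by $k_p$ and by the formulas appearing in $q_p$; while $k_p\le m$ is bounded, Fact~\ref{rank} gives no control whatsoever over which formulas occur in $q_p$---it could be any finite subset of $p$. So as written you have only shown that each $p\rest\psi$ is definable by \emph{some} formula over $A$, not that one scheme works uniformly. Your claim that ``the induction produces the same finite list of candidate formulas'' is precisely the missing step, and it needs justification: one must argue that $q_p$ can always be taken to consist of formulas from a fixed finite set $\Delta^*$ (built from $\Delta_1=\{\psi\}$ together with the clause-(iv) templates for $\psi_i\in\Delta_2$), e.g.\ by showing $R_A(p,\Delta_1,\Delta_2,2)=R_A(p\rest\Delta^*,\Delta_1,\Delta_2,2)$, or by building $q_p$ explicitly by descending through the rank clauses instead of invoking Fact~\ref{rank} as a black box. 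Once the candidate shapes are finite, the amalgamation using $|A|\ge 2$ that you mention does finish~(ii).
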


\section{More on completeness}

In this section we take a closer look at the notion of completeness, and prove several easy ``internal'' characterizations that will become very useful later. 

\begin{obs}\label{obs:uniformdef}
In Hypothesis \ref{asm:1} (i), the definitions of $\psi$-types over $P$ can be assumed to be \emph{uniform}. Specifically:

 There are $\langle \Psi_\psi(\y,\z) :\psi(\bar
x,\bar y)\in L(T)\rangle $ such that for all
$\bar a\subseteq \cC$, $tp_\psi (\bar a/P^\cC)$ is definable by
$\Psi _\psi (\bar y,\bar c)$ for some $\bar c\subseteq P^\cC$. 

In other words, for every $\psi(\x,\y)$ and $\a \in \fC$ there exists $\c \in P^\cC$ such that $\cC \models \forall \y \psi(\a,\y) \longleftrightarrow \Psi(\y,\c)$.

\end{obs}
\begin{proof}
 This is a standard  compactness argument. First, note that by compactness, for every $\psi(\x,\y)$ there is a finite number of possibilities for $\Psi_\psi(\y,\z)$. Otherwise, by considering the following:
 
 \[
\left\{ 
 \forall \z  \neg\forall \y \left(\psi(\x,\y) \longleftrightarrow  \Psi(\y,\z)\right) \colon \Psi(\y,\z)
\right\} 
 \]
 
one constructs an undefinable $\psi$-type.
 
Since $|P^\cC| \ge 2$, one can now easily manipulate these into a single defining formula. 
\end{proof}

\bigskip

\begin{obs} \label{obs:defcomplete}For any complete $A$ and
for all
$\bar a\subseteq A$, $tp_\psi (\bar a/P\cap A)$ is definable by
$\Psi _\psi (\bar y,\bar c)$ for some $\bar c\subseteq A\cap P$
(where $\Psi _\psi (\bar y,\bar z)$ is as in Observation \ref{obs:uniformdef}).
\end{obs}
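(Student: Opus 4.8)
The plan is to deduce Observation \ref{obs:defcomplete} directly from the definition of completeness together with the uniform definability provided by Observation \ref{obs:uniformdef}. Fix a complete set $A$, a formula $\psi(\bar x,\bar y)$, and a tuple $\bar a\subseteq A$. By Observation \ref{obs:uniformdef} there is a tuple $\bar c\subseteq P^\cC$ such that $\Psi_\psi(\bar y,\bar c)$ defines $tp_\psi(\bar a/P^\cC)$ in $\cC$, i.e. $\cC\models\forall\bar y\,(\psi(\bar a,\bar y)\leftrightarrow\Psi_\psi(\bar y,\bar c))$. The only thing to fix is that a priori $\bar c$ lies in $P^\cC$ but not necessarily in $P^A=P\cap A$; the task is to replace it by a tuple of parameters from $P\cap A$.

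First I would write down, for the fixed $\bar a$, the first-order formula $\chi(\bar z)$ (with parameters $\bar a\subseteq A$) expressing ``$\Psi_\psi(\bar y,\bar z)$ defines $tp_\psi(\bar a/P^\cC)$'', namely $\chi(\bar z):=\forall\bar y\,\bigl(\psi(\bar a,\bar y)\leftrightarrow\Psi_\psi(\bar y,\bar z)\bigr)$. We have just observed that $\cC\models(\exists\bar z\in P)\,\chi(\bar z)$ — indeed $\bar c$ witnesses it. Now apply the definition of completeness (Definition \ref{dfn:complete}) to the formula $\chi$ and the parameter tuple $\bar a\subseteq A$: since $\cC\models(\exists\bar z\in P)\chi(\bar z)$, there is $\bar c'\subseteq P\cap A$ with $\cC\models\chi(\bar c')$. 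But $\cC\models\chi(\bar c')$ says precisely that $\Psi_\psi(\bar y,\bar c')$ defines $tp_\psi(\bar a/P^\cC)$, and restricting the equivalence $\psi(\bar a,\bar b)\leftrightarrow\Psi_\psi(\bar b,\bar c')$ to $\bar b\subseteq P\cap A$ shows that $\Psi_\psi(\bar y,\bar c')$ defines $tp_\psi(\bar a/P\cap A)$ as well. This is the desired conclusion, with $\bar c'$ in place of the $\bar c$ from the statement.

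One small point to be careful about: the formula $\chi(\bar z)$ as written contains the parameters $\bar a$, so strictly speaking to invoke Definition \ref{dfn:complete} one should absorb $\bar a$ into the parameter tuple there — that is, apply completeness with the formula $\psi'(\bar z,\bar w):=\forall\bar y(\psi(\bar w,\bar y)\leftrightarrow\Psi_\psi(\bar y,\bar z))$ and the parameter $\bar w=\bar a\subseteq A$. This is purely bookkeeping. The only genuine ingredient is that the property ``$\Psi_\psi(\bar y,\bar z)$ is a correct definition of the $\psi$-type of the fixed $\bar a$ over $P$'' is expressible by a single first-order formula in $\bar z$ over $A$ — which is immediate because $\Psi_\psi$ is a fixed formula and $\bar a$ is a fixed tuple — so completeness can reflect the existence of such a $\bar z$ from $P^\cC$ down to $P\cap A$. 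I do not anticipate any real obstacle here; the statement is essentially an unwinding of the definitions, and the work is to phrase the existential assertion in the exact syntactic form demanded by Definition \ref{dfn:complete}.
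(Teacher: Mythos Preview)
Your proposal is correct and follows essentially the same approach as the paper's proof: express ``$\Psi_\psi(\bar y,\bar z)$ defines $tp_\psi(\bar a/P)$'' as a single formula in $\bar z$ over $A$, observe that it is satisfied by some $\bar z\in P^\cC$ (Observation \ref{obs:uniformdef}), and then invoke completeness of $A$ to pull the witness down into $P\cap A$. The paper's argument is just a terser version of yours.
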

\begin{proof}
 Let $\bar a\subseteq A$. We know (by the choice of  $\Psi _\psi (\bar y,\bar z)$) that 
 
 \[
 	\cC \models (\exists \z \in P)\left( \forall \y \psi(\a,\y) \longleftrightarrow \Psi _\psi (\bar y,\bar z)\right)
 \]
 Since $A$ is complete, there is already such a $\c \in P^A$. 
 
\end{proof}

In \cite{ShUs322a} we stated this with dependence on $A$, but there is clearly no need.

\smallskip

This gives us the following useful characterization of complete sets, slightly strengthening Fact \ref{obs:complete_characterization}:

\begin{co}
 \label{co:defcomplete} Let $A$ be a set. The following are equivalent:
\begin{enumerate}
\item $A$ is complete
\item  $P^A \elem P^\cC$, and for any formula  $\psi(\a,\y)$ over $A$ (i.e., $\a \in A$) there is $\d \in P^A$ such that:
\[ \cC\models \forall \y \left[\psi(\a,\y) \longleftrightarrow \Psi _\psi (\bar y,\bar d)\right] \]
\end{enumerate}

\end{co}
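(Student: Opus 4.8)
\textbf{Proof plan for Corollary \ref{co:defcomplete}.}

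The plan is to prove the equivalence by showing that each condition implies the other, using the earlier characterization of completeness (Fact \ref{obs:complete_characterization}) together with the uniform definability Observation \ref{obs:uniformdef} and Observation \ref{obs:defcomplete}. The implication (1) $\Rightarrow$ (2) is essentially a restatement of Observation \ref{obs:defcomplete}: if $A$ is complete, then by Fact \ref{obs:complete_characterization} we get $P^A \prec P^\cC$ for free, and Observation \ref{obs:defcomplete} directly says that for every $\psi(\x,\y)$ and every $\a \subseteq A$ the $\psi$-type $tp_\psi(\a/P^A)$ is defined by $\Psi_\psi(\y,\d)$ for some $\d \in P^A$, which (unwinding what ``$\Psi_\psi(\y,\d)$ defines $tp_\psi(\a/P^A)$'' means, and recalling that $\Psi_\psi(\y,\z)$ also defines $tp_\psi(\a/P^\cC)$ over $P^\cC$ by the choice made in Observation \ref{obs:uniformdef}) is exactly the displayed equation $\cC \models \forall \y\, [\psi(\a,\y) \longleftrightarrow \Psi_\psi(\y,\d)]$.

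For the converse (2) $\Rightarrow$ (1), I would verify the defining condition of completeness (Definition \ref{6}) directly. So suppose $P^A \prec P^\cC$ and the displayed property holds, and suppose $\models (\exists \x \in P)\,\psi(\x,\b)$ for some $\b \subseteq A$ and some formula $\psi(\x,\y)$; we must find $\a \subseteq P^A$ with $\models \psi(\a,\b)$. Here one has to be slightly careful about which variable ranges over $P$: apply hypothesis (2) to the formula $\psi(\b,\x)$ viewed as a formula in the free variable $\x$ with parameter $\b \in A$ (i.e.\ take ``$\y$'' of (2) to be the variable constrained to $P$). This yields $\d \in P^A$ such that $\cC \models \forall \x\,[\psi(\b,\x) \longleftrightarrow \Psi_\psi(\x,\d)]$; in particular the set $\psi(\b,\cC) \cap P^\cC$ is defined over $P^\cC$ by $\Psi_\psi(\x,\d)$ with $\d \in P^A$. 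Since it is nonempty in $P^\cC$ and $P^A \prec P^\cC$, it is already nonempty in $P^A$, giving the required $\a$. Finally, $P^A \prec P^\cC$ takes care of the ``$A \cap P^\cC \prec P^\cC$'' clause, and one invokes Fact \ref{obs:complete_characterization} (or just Definition \ref{6} plus Lemma \ref{lem:complete_QE} if a cleaner route is wanted) to conclude $A$ is complete.

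I don't expect a serious obstacle here; the statement is a packaging of facts already established. The one point that needs care — and where a careless argument could go wrong — is the bookkeeping of which variable tuple is the ``$P$-variable'' and which is the parameter tuple in the formula $\psi$, since Definition \ref{6} puts the existential $P$-variable first while Observation \ref{obs:uniformdef}/\ref{obs:defcomplete} are phrased with the $P$-variable in the ``$\y$'' slot of $\Psi_\psi$. Renaming variables and possibly passing to the formula $\psi'(\y,\x) := \psi(\x,\y)$ (which is again a formula of $L(T)$, so has its own uniform definition $\Psi_{\psi'}$) handles this cleanly. A secondary point worth a sentence is to note that the equivalence in (2) is stated with ``$\Psi_\psi$'' being the fixed uniform choice from Observation \ref{obs:uniformdef}, so no existential quantifier over definitions is hidden in the statement; this is exactly what makes it a genuine strengthening of Fact \ref{obs:complete_characterization}.
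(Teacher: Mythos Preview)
Your proposal is correct and matches the paper's implicit approach: the paper states the corollary without proof, treating it as an immediate consequence of Fact~\ref{obs:complete_characterization} and Observation~\ref{obs:defcomplete}, which is exactly what you do for $(1)\Rightarrow(2)$. For $(2)\Rightarrow(1)$ you take a slightly longer route by verifying Definition~\ref{6} directly (and then somewhat redundantly invoking Fact~\ref{obs:complete_characterization} at the end); the cleaner argument --- which you yourself hint at --- is simply to note that hypothesis~(2) says every $tp_\psi(\bar a/P^\cC)$ is definable over $P^A$ and that $P^A\prec P^\cC$, so Fact~\ref{obs:complete_characterization} gives completeness immediately, with no need to chase witnesses through Definition~\ref{6}.
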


A similar characterization can be formulated on the level of types:

\begin{co}\label{co:stardefincluded}
 Let $A$ be a complete set and $\a$ a tuple. Then $tp(\a/A) \in S_*(A)$ if and only if for every formula $\psi(\x,\b,\y)$ over $A$ there is $\d \in P^A$ such that 
\[
\models (\forall \y \subseteq P) \left[ \psi(\a,\b,\y) \longleftrightarrow \Psi_\psi(\y,\d) \right] 
\]
where  $\Psi_\psi(\y,\z)$ is the defining formula as in Observation \ref{obs:defcomplete} for the formula $\psi = \psi(\x\x',\y)$; so $\x\x'$ are the variables of $\psi$, and $\y$ are the parameters, where $\len(\x') = \len(\b)$.
 \end{co}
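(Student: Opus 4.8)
The plan is to deduce Corollary \ref{co:stardefincluded} by applying the characterization of $S_*(A)$ from Observation \ref{8.5} together with the uniform-definability statement of Observation \ref{obs:defcomplete}, essentially by treating the parameter tuple $\b$ as part of the realizing tuple. Concretely, write $\psi = \psi(\x\x',\y)$ where $\x'$ is a block of variables of length $\len(\b)$, and consider the concatenated tuple $\a\b$ rather than $\a$ alone. Since $A$ is complete and (by Observation \ref{8.5}, using that $\a\b \subseteq A$) every instance $\exists \y \in P\, \theta(\a\b,\c,\y)$ with $\c \in A$ is already witnessed in $P^A$, one sees that $A\cup\{\}$ — i.e.\ simply $A$ — is complete, so Observation \ref{obs:defcomplete} applies to the tuple $\a\b$: the $\psi$-type $tp_\psi(\a\b/P^\cC)$ is defined by $\Psi_\psi(\y,\d)$ for some $\d \in P^\cC$, and completeness of $A$ forces $\d$ to be found inside $P^A$. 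That is exactly the displayed equivalence, since $\psi(\a\b,\y)$ is the same as $\psi(\a,\b,\y)$.

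For the direction that the displayed condition implies $tp(\a/A)\in S_*(A)$, I would run Observation \ref{8.5} in reverse. Suppose $\models \exists \y \in P\, \psi(\a,\b,\y)$ for some formula $\psi(\x,\b,\y)$ over $A$. By hypothesis there is $\d \in P^A$ with $\models (\forall \y \subseteq P)[\psi(\a,\b,\y) \leftrightarrow \Psi_\psi(\y,\d)]$; hence $\models \exists \y \in P\, \Psi_\psi(\y,\d)$. Now $\Psi_\psi(\y,\d)$ is a formula over $P^A$, and $P^A \prec P^\cC$ by completeness of $A$ (via Fact \ref{obs:complete_characterization} or Corollary \ref{co:defcomplete}), so the existential is realized by some $\y = \bar e \in P^A$; then $\bar e$ also satisfies $\psi(\a,\b,\bar e)$ by the equivalence, giving the witness in $P\cap A$ required by Observation \ref{8.5}. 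This shows $A\cup\a$ is complete with $P\cap(A\cup\a) = P\cap A$, i.e.\ $tp(\a/A)\in S_*(A)$.

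The only genuinely delicate point is bookkeeping the variable partition: one must be careful that $\Psi_\psi$ in the statement is the defining formula attached to $\psi$ read as $\psi(\x\x',\y)$ (variables $\x\x'$, parameters $\y$), not to $\psi$ read with $\b$ already substituted — otherwise the uniform definition from Observation \ref{obs:uniformdef}/\ref{obs:defcomplete} is not literally the right object. Once that is fixed, the argument is a direct unwinding of definitions; I expect no substantive obstacle, and the proof should be just a few lines citing Observation \ref{8.5}, Observation \ref{obs:defcomplete}, and the fact that $P^A \prec P^\cC$.
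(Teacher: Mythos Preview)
The paper states this as a corollary with no proof, so there is nothing to compare against; your argument is the intended one (apply Observation~\ref{obs:defcomplete} to the complete set $A\cup\a$ for the forward direction, and unwind Observation~\ref{8.5} together with $P^A\prec P^\cC$ for the converse), and your backward direction is clean and correct.

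That said, the forward paragraph is garbled in a way that makes it look wrong even though the idea is right. You write ``using that $\a\b\subseteq A$'' and then ``$A\cup\{\}$ --- i.e.\ simply $A$ --- is complete, so Observation~\ref{obs:defcomplete} applies to the tuple $\a\b$''. But $\a\notin A$, so $\a\b\not\subseteq A$, and Observation~\ref{obs:defcomplete} as stated requires the tuple to lie in the complete set in question; applying it inside $A$ to the tuple $\a\b$ is illegitimate. What you should say is: by definition of $S_*(A)$ the set $A\cup\a$ is complete with $P\cap(A\cup\a)=P^A$; since $\a\b\subseteq A\cup\a$, Observation~\ref{obs:defcomplete} (applied to the complete set $A\cup\a$) gives $\d\in P\cap(A\cup\a)=P^A$ with the displayed equivalence. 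Alternatively, and closer to what you seem to have in mind, apply Observation~\ref{8.5} directly to the formula $\theta(\x,\b,\z)=(\forall\y\subseteq P)[\psi(\x,\b,\y)\leftrightarrow\Psi_\psi(\y,\z)]$: by Observation~\ref{obs:uniformdef} we have $\models\exists\z\in P\,\theta(\a,\b,\z)$, so Observation~\ref{8.5} yields the witness $\d\in P^A$. Either route is fine; just make sure you name the correct complete set.
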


%
%

\begin{re}\label{re:stardefinincluded}
 A different way of phrasing the previous Corollary is: 
 
  Let $A$ be a complete set and $p \in S(A)$. Then $p \in S_*(A)$ if and only if for every formula $\psi(\x,\b,\y)$ over $A$ there is $\d \in P^A$ such that 
\[
(\forall \y \subseteq P) \left[ \psi(\x,\b,\z) \longleftrightarrow \Psi_\psi(\y,\d) \right]  \in p
\]
Where $\Psi_{\psi}(\y,\z)$ is as in the previous Corollary. 
\end{re}

\medskip

Combining some of the observations above (Observation \ref{8.5} and Corollary \ref{co:stardefincluded}), we can conclude a convenient ``internal'' characterization of being a *-type over a complete set:

\begin{co}\label{co:type}
 Let $A$ be a  set and $p = p(\x)  \in S(A)$. Then the following are equivalent:
 \begin{enumerate}
\item $A$ is complete and $p \in S_*(A)$
\item For every formula $\psi(\x,\b,\z)$ over $A$ there exists $\d \in P^A$ such that 

\[
	\left[ \left((\exists \z \subseteq P)  \psi(\x,\b,\z)\right) \longrightarrow \psi(\x,\b,\d) \right] \in p
\]
\item
$P^A \elem P^\cC$, and for every formula $\psi(\x,\b,\y)$ over $A$ there is $\d \in P^A$ such that 
\[
\left[(\forall \y \subseteq P) \left( \psi(\x,\b,\y) \longleftrightarrow \Psi_\psi(\y,\d) \right) \right]  \in p
\]
where  $\Psi_\psi(\y,\z)$ is as in Corollary \ref{co:stardefincluded}.
\end{enumerate}

\end{co}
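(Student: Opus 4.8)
The plan is to prove this as a cycle of implications, $(1) \Rightarrow (3) \Rightarrow (2) \Rightarrow (1)$, pulling together the two characterizations already at our disposal. For $(1) \Rightarrow (3)$: assuming $A$ is complete and $p \in S_*(A)$, the condition $P^A \elem P^\cC$ is immediate from completeness (Fact \ref{obs:complete_characterization} or Corollary \ref{co:defcomplete}), and the statement that for each $\psi(\x,\b,\y)$ over $A$ there is $\d \in P^A$ with $\bigl[(\forall \y \subseteq P)(\psi(\x,\b,\y) \leftrightarrow \Psi_\psi(\y,\d))\bigr] \in p$ is exactly the content of Remark \ref{re:stardefinincluded} (equivalently Corollary \ref{co:stardefincluded}), so there is essentially nothing to do here beyond citing it. Care is needed only in bookkeeping the variable partition: in $\Psi_\psi$ the role of ``parameters'' is played by $\y$ and the displayed formula quantifies $\y$ inside $P$, matching the convention fixed in Corollary \ref{co:stardefincluded}.

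For $(3) \Rightarrow (2)$: I would argue purely logically, type by type over $A$. Fix $\psi(\x,\b,\z)$ over $A$ and let $\d \in P^A$ be the parameter supplied by (3), so $\bigl[(\forall \z \subseteq P)(\psi(\x,\b,\z) \leftrightarrow \Psi_\psi(\z,\d))\bigr] \in p$. I want to produce $\d' \in P^A$ with $\bigl[((\exists \z \subseteq P)\psi(\x,\b,\z)) \to \psi(\x,\b,\d')\bigr] \in p$. The idea: $\Psi_\psi(\z,\d)$ internally defines the $\psi$-slice of the realization of $p$; if $p$ asserts that this slice is nonempty within $P$, then since $P^A \elem P^\cC$ and $\d \in P^A$, there is already a witness $\d' \in P^A$ with $\Psi_\psi(\d',\d)$ holding (in $\cC^P$, hence in $\cC$, by stable embeddedness and Hypothesis \ref{asm:1}), and then by the biconditional in $p$ we get $\psi(\x,\b,\d') \in p$. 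One must split on whether $p$ forces $(\exists \z \subseteq P)\psi(\x,\b,\z)$ or forces its negation: in the latter case the implication in (2) is in $p$ vacuously, for any choice of $\d'$, provided $P^A \neq \emptyset$, which holds since $P$ is infinite. I should phrase this carefully using that $p$ is a complete type over $A$, so for each formula over $A$ either it or its negation lies in $p$.

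For $(2) \Rightarrow (1)$: this is Observation \ref{8.5} essentially verbatim — condition (2) says precisely that for every $\psi(\x,\b,\z)$ over $A$, whenever (a realization $\c$ of $p$ satisfies) $\exists \z \in P\, \psi(\c,\b,\z)$, there is $\d \in P \cap A$ with $\psi(\c,\b,\d)$; and Observation \ref{8.5} states that this is equivalent to ``$A$ is complete and $tp(\c/A) \in S_*(A)$''. Taking $\c$ any realization of $p$ closes the loop. The main obstacle — though it is more a matter of care than of depth — is the $(3) \Rightarrow (2)$ step: one has to be precise about which variables are quantified inside $P$, about the passage between satisfaction in $\cC$ and in $\cC^P$ for formulas about $P$ (justified by Hypothesis \ref{asm:1}(i),(ii) and the remark following it that the induced structure on $P^\cC$ is exactly that of $\cC^P$), and about the vacuous case; the rest is assembling facts already proved in the excerpt.
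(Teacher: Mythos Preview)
Your proposal is correct. The paper itself gives no detailed proof of this Corollary at all: it merely states that the result follows by ``combining some of the observations above (Observation \ref{8.5} and Corollary \ref{co:stardefincluded})'', i.e., it treats (i)$\Leftrightarrow$(ii) and (i)$\Leftrightarrow$(iii) as immediate from those two results (together with Corollary \ref{co:defcomplete} for the $P^A\prec P^\cC$ clause). Your cycle (i)$\Rightarrow$(iii)$\Rightarrow$(ii)$\Rightarrow$(i) reaches the same conclusion with a small detour: the direct (iii)$\Rightarrow$(ii) argument you sketch is perfectly fine (and your care about the vacuous case and about transferring satisfaction between $\cC$ and $\cC^P$ is well-placed), but strictly speaking it is unnecessary, since both (ii) and (iii) are each equivalent to (i) by the cited facts, and one can simply pivot through (i).
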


\medskip 

\noindent
Let us extend these ideas to a slightly less trivial characterization.

\smallskip
\begin{theorem}\label{co:complete_char}
Let $A$ be a set. Then the following are equivalent:
\begin{enumerate}
\item $A$ is complete
\item For every finite type $p_0(\x)$ over $A$ (not necessarily realized in $A$) and every finite collection \set{\psi_i(\x,\b_i,\z_i) \colon i<k} of formulae over $A$, there are  $\set{\d_i \colon i<k} \subseteq P^A$ such that the following is a (finite) type over $A$:

\[
	p_0(\x) \cup \left\{  \left[(\exists \z_i \subseteq P)  \psi_i(\x,\b_i,\z_i)\right] \longrightarrow \psi_i(\x,\b_i,\d_i)  \colon 
	i<k \right\}
\]

\item For every finite type $p_0(\x)$ over $A$ (not necessarily realized in $A$) and every formula $\psi(\x,\b,\z)$ over $A$, there is  $\d \subseteq P^A$ such that the following is a (finite) type over $A$:

\[
	p_0(\x) \cup \left\{  \left[(\exists \z \subseteq P)  \psi(\x,\b,\z)\right] \longrightarrow \psi(\x,\b,\d)   \right\}
\]

\item
$P^A \elem P^\cC$, and:

For every finite type $p_0(\x)$ over $A$ (not necessarily realized in $A$) and every finite collection \set{\psi_i(\x,\b_i,\y_i) \colon i<k} of formulae over $A$ , there are  $\set{\d_i \colon i<k} \subseteq P^A$ such that  the following is a (finite) type over $A$:
\[
p_0(\x) \cup \left\{  (\forall \y_i \subseteq P) \left[ \psi_i(\x,\b_i,\y_i) \longleftrightarrow \Psi_{\psi_i}(\y_i,\d_i) \right]  \colon 
	i<k \right\}
%
\]
where  $\Psi_{\psi_i}(\y_i,\z_i)$ is the defining formula for $\psi_i = \psi_i(\x\x^i, \y_i)$ as in Corollary \ref{co:stardefincluded}; so $\len(\x^i) = \len(\b^i)$.

\item
$P^A \elem P^\cC$, and:

For every finite type $p_0(\x)$ over $A$ (not necessarily realized in $A$) and a formula $\psi(\x,\b,\y)$  over $A$ , there is  $\d \subseteq P^A$ such that  the following is a (finite) type over $A$:
\[
p_0(\x) \cup \left\{  (\forall \y \subseteq P) \left[ \psi(\x,\b,\y) \longleftrightarrow \Psi_{\psi}(\y,\d) \right]  \colon 
	i<k \right\}
%
\]
where  $\Psi_\psi(\y,\z)$ is the defining formula for $\psi = \psi(\x\x',\y)$.

\end{enumerate}

\end{theorem}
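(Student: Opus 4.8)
The plan is to prove the cycle of implications $(1)\Rightarrow(4)\Rightarrow(2)\Rightarrow(3)\Rightarrow(1)$ together with the easy equivalences $(4)\Leftrightarrow(5)$ and $(2)\Leftrightarrow(3)$, so that the whole theorem collapses to a single nontrivial step. The equivalences $(4)\Leftrightarrow(5)$ and $(2)\Leftrightarrow(3)$ are purely bookkeeping: a finite collection of formulas $\psi_i(\x,\b_i,\z_i)$ can be coded into a single formula $\psi(\x,\bar b,\bar z)$ (by taking a disjoint union of the $\z_i$ and using a conjunction-with-cases trick, exactly as in the proof of Observation~\ref{obs:uniformdef}), and the defining formula of the combined $\psi$ recovers the individual $\Psi_{\psi_i}$ on the relevant coordinates; since we always assume $|P^\cC|\ge 2$ there is room for this encoding. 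Likewise $(2)$ for a single $\psi$ is $(3)$, and $(2)$ for a family follows from $(3)$ applied to the coded single formula. So I will state these reductions briefly and concentrate on the core.

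The implication $(1)\Rightarrow(4)$ is where the real content lies, but it is essentially a compactness-plus-completeness argument. Fix a complete $A$, a finite $p_0(\x)$ over $A$, and formulas $\psi_i(\x,\b_i,\y_i)$, $i<k$. First, $P^A\elem P^\cC$ holds by Fact~\ref{obs:complete_characterization} (or Corollary~\ref{co:defcomplete}). To produce the $\d_i$, I want to realize $\x$ by some tuple $\c$ (in $\cC$, not necessarily in $A$) realizing $p_0$ such that $\tp(\c/A)\in S_*(A)$: then Corollary~\ref{co:stardefincluded} applied to $\tp(\c/A)$ hands me $\d_i\in P^A$ with $(\forall\y_i\subseteq P)[\psi_i(\c,\b_i,\y_i)\leftrightarrow\Psi_{\psi_i}(\y_i,\d_i)]$, which means the displayed set of formulas is consistent (realized by $\c$), i.e. is a type over $A$. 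The point is therefore to find such a $\c$. This is exactly what completeness of $A$ buys: by Observation~\ref{8.5}/Corollary~\ref{co:type}, completeness of $A$ is equivalent to the statement that over $A$ we can always ``close off'' the parameters needed to define existential-over-$P$ statements; concretely, I extend $p_0$ to a maximal type $q$ over $A$ in $S(A)$ that still lies in $S_*(A)$ — possible because the ``$*$-ness'' condition in clause (2) of Corollary~\ref{co:type} is finitely satisfiable on top of any finite $p_0$ precisely because $A$ is complete (for each $\psi$ the required $\d\in P^A$ exists by Definition~\ref{dfn:complete}), and consistency is preserved under the chain. Then any realization $\c$ of $q$ works.

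The reverse direction $(3)\Rightarrow(1)$ is a direct unwinding of definitions. Assume (3). To check completeness via Definition~\ref{dfn:complete}, suppose $\models(\exists\x\in P)\psi(\x,\b)$ with $\b\subseteq A$; wait — more carefully, completeness concerns $\psi(\bar x,\bar b)$ with $\bar x$ ranging in $P$, so I apply (3) with $p_0$ empty and with the formula ``$\psi(\z,\b)$'' viewed as $\psi(\x,\b,\z)$ where $\x$ is a dummy of length $0$ (or length $1$, realized arbitrarily); (3) then yields $\d\subseteq P^A$ such that $[(\exists\z\subseteq P)\psi(\b,\z)]\rightarrow\psi(\b,\d)$ is consistent, i.e. true in $\cC$, and since the antecedent holds by hypothesis we get $\models\psi(\b,\d)$ with $\d\in P^A$ — which is exactly the Tarski–Vaught condition. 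Also $P^A\elem P^\cC$ is part of (3). Hence $A$ is complete.

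The main obstacle, and the step I would spend the most care on, is $(1)\Rightarrow(4)$ — specifically justifying that one can always extend a finite type $p_0$ over a complete $A$ to a member of $S_*(A)$ (equivalently, that the family of ``defining conditions'' in Corollary~\ref{co:type}(2) is finitely consistent with $p_0$). Everything hinges on: for each single formula $\psi(\x,\b,\z)$ and each finite consistent $p_1\supseteq p_0$ already containing some of these conditions, the formula $[(\exists\z\subseteq P)\psi(\x,\b,\z)]\rightarrow\psi(\x,\b,\d)$ can be added consistently for a suitable $\d\in P^A$. This is where completeness of $A$ is used in an essential, not merely cosmetic, way — it is what makes the needed $\d$ available \emph{inside} $P^A$ rather than only in $P^\cC$ — and I would present this as the heart of the proof, with the rest (the coding reductions and $(3)\Rightarrow(1)$) dispatched quickly.
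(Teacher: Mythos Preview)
Your approach is correct, but it takes a longer route than the paper's. The paper proves (i)$\Rightarrow$(ii) and (i)$\Rightarrow$(iv) directly, without ever building a full $*$-type extension: given $p_0$ and the $\psi_i$ as in (ii), it simply forms the single formula
\[
\Phi(\z_0,\ldots,\z_{k-1}) \;=\; \exists \x\left(\bigwedge p_0(\x)\ \wedge\ \bigwedge_{i<k}\big[(\exists\z_i\subseteq P)\,\psi_i(\x,\b_i,\z_i)\to\psi_i(\x,\b_i,\z_i)\big]\right),
\]
observes that $\cC\models(\exists\bar z\subseteq P)\,\Phi(\bar z)$ (any realization of $p_0$ works for $\x$), and then invokes completeness of $A$ \emph{once} to pull all the witnesses $\d_i$ into $P^A$. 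The argument for (i)$\Rightarrow$(iv) is identical with the $\Psi_{\psi_i}$-biconditional in place of the implication. The remaining implications (ii)$\Rightarrow$(iii)$\Rightarrow$(i) and (iv)$\Rightarrow$(v)$\Rightarrow$(i) are declared clear (taking $p_0=[\x=\x]$ for the backward steps), exactly as you handle (3)$\Rightarrow$(1).

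Your detour---extend $p_0$ to a full $q\in S_*(A)$, then read off the $\d_i$ from Corollary~\ref{co:stardefincluded}---works, but the inductive step you flag as the ``heart'' (adding one defining condition on top of a finite $p_1$, using completeness to locate the next $\d$) is precisely the paper's argument applied to a single $\psi$. You then iterate it transfinitely to build $q$, only to use finitely many instances at the end; the paper skips the iteration entirely by bundling all $k$ formulas into one $\Phi$. Your organization does buy something: it makes the connection to $S_*(A)$ explicit and effectively anticipates Lemma~\ref{lem:isolated_star}. As a proof of this theorem alone, though, the direct route is shorter and avoids the coding reductions for (2)$\Leftrightarrow$(3) and (4)$\Leftrightarrow$(5).

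Two minor points. First, your cycle needs $(4)\Rightarrow(2)$, which you never argue; it is not entirely automatic, since the $\d_i$ witnessing (4) are not the $\d_i$ you need for (2)---you must use $P^A\prec P^\cC$ to convert a witness of $\exists\z\subseteq P\,\Psi_{\psi_i}(\z,\d_i)$ into one lying in $P^A$. Second, $P^A\prec P^\cC$ is \emph{not} part of the statement of (3); it follows from (3) via your argument for completeness, but calling it ``part of (3)'' is inaccurate.
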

\begin{proof}
 (i) $\implies$ (ii): Assume $A$ is complete, and let $p_0$, $\psi_i$ be as in (ii). Consider the following formula:
 
 \[
 	\Phi(\z_0, \ldots, \z_{k-1}) = \exists \x \left(\bigwedge p_0(x) \bigwedge_{i<k} \left[(\exists \z_i \subseteq P)  \psi_i(\x,\b_i,\z_i)\right] \longrightarrow \psi(\x,\b_i,\z_i)\right)
 \]
 
 Clearly, $\cC \models (\exists \z\subseteq P) \Phi(\z)$ (any realization of $p$ can be chosen as $\x$).  Since $A$ is complete (and $\Phi$ is over $A$), there exist $\d = \seq{d_i \colon i<k} \subseteq P^A$ such that $\cC \models \Phi(\d)$; it is easy to see that they are as required in (ii). 
 
 (ii) $\implies$ (iii) and (iii) $\implies$ (i) are clear (for the latter, take e.g. $p_0 = [x=x]$). 

Similarly for (iv) $\implies$ (v) and (v) $\implies$ (i).
 
 
%

 (i) $\implies$ (iv): Since $A$ is complete, $P^A \elem P^\cC$.  Now let $p_0$, $\psi_i$ be as in (iv). Similarly to the previous proof, consider 
 
\[
 	\Phi(\z_0, \ldots, \z_{k-1}) = \exists \x \left(\bigwedge p_0(x) \bigwedge_{i<k} (\forall \y_i \subseteq P) \left[ \psi_i(\x,\b_i,\y_i) \longleftrightarrow \Psi_{\psi_i}(\y_i,\z_i) \right] \right)
 \]
 
Again, we claim that $\cC\models (\exists \z \subseteq P) \Phi( \z)$.  Indeed, we are not asking (yet) that $\z$ be in $P^A$; so taking any $\a \models p$ (for $\x$), and any $\d'_i \in P^\cC$ as in Observation \ref{obs:uniformdef} for each $\psi_i(\a\b_i,\y_i)$ works. But now, since $\Phi(\z)$ is over $A$, and $A$ is complete, we can find $\d$ as required in $P^A$.

\end{proof}

\medskip

Our next goal is to investigate the notion of completeness under the assumption of elimination of imaginaries. Recall that a tuple $c \in \cC$ is called a code (or a canonical parameter) for a definable set $X$ if there is a formula $\ph(x,y)$ such that $c$ is the unique tuple satisfying $X = \ph(x,c)$. One can characterize codes in terms of global automorphisms. Specifically, $c$ is a code for $X$ if and only if: for every every automorphism $\sigma$ of $\cC$, $\sigma$ fixes $X$ (setwise) if and only if it fixes $c$ pointwise. 

Recall that $T$ is said to eliminate imaginaries if every 0-definable set has a code (which implies that every definable set has a code). 

\begin{lem}\label{lem:eq}
 Assume that $T^P$ eliminates imaginaries. Then every definable subset of $P$ has a code in $P$. 
\end{lem}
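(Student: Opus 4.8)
The plan is to produce a code for $X$ inside $P$ by passing to the theory $T^P$, and then to verify, via the automorphism characterization of codes recalled above together with stable embeddedness, that this code remains a code when viewed in the richer structure $\cC$.

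First, let $X\subseteq P$ be definable in $\cC$. By Hypothesis \ref{asm:1}(i) it is defined by some formula $\theta(x,\bar c)$ with $\bar c\subseteq P^\cC$, and by the discussion immediately following Hypothesis \ref{asm:1} the structure induced on $P^\cC$ by $\cC$ is precisely that of $\cC^P$; hence $X$ is a $T^P$-definable subset of $\cC^P$. Since $\cC^P$ is the monster model of $T^P$ and $T^P$ eliminates imaginaries, $X$ has a code $\bar e$ in $\cC^P$, and in particular $\bar e\subseteq P$.

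It remains to check that $\bar e$ is also a code for $X$ in $\cC$. We use the automorphism characterization. Let $\sigma$ be an automorphism of $\cC$. As $P$ is $0$-definable, $\sigma$ maps $P$ onto $P$; and since the induced structure on $P$ is that of $\cC^P$, the restriction $\sigma|_P$ is an automorphism of $\cC^P$. Because $X\subseteq P$ and $\bar e\subseteq P$, we have $\sigma(X)=X$ iff $\sigma|_P$ fixes $X$ setwise, and $\sigma(\bar e)=\bar e$ iff $\sigma|_P$ fixes $\bar e$ pointwise. Since $\bar e$ codes $X$ in $\cC^P$, these last two conditions are equivalent; therefore $\sigma(X)=X$ iff $\sigma(\bar e)=\bar e$. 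By the automorphism characterization of codes in the monster model $\cC$, $\bar e$ is a code for $X$, and $\bar e\subseteq P$, as required.

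The only subtle point is the transfer in the last step: a priori $\cC$ has more definable sets (and more symmetry to break) than $\cC^P$, so a code computed inside $\cC^P$ need not, in general, remain a code in $\cC$. Stable embeddedness is exactly what removes this difficulty, since it forces every automorphism of $\cC$ to act on $P$ through an automorphism of $\cC^P$; once this is noted there is no real obstacle. Alternatively, one can argue at the level of formulas: $X$ is definable over $\bar e$ in $\cC$ (translate the $T^P$-defining formula through the induced structure), and the automorphism computation above shows that $\bar e$ is interdefinable over $\emptyset$ with the canonical parameter of $X$, which is precisely the assertion that $\bar e$ is a code for $X$.
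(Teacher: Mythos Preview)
Your proof is correct and follows essentially the same approach as the paper's: pass to $\cC^P$ via Hypothesis~\ref{asm:1}, take a code $\bar e$ there using elimination of imaginaries in $T^P$, and then check via the automorphism characterization that $\bar e$ remains a code in $\cC$. The paper's version is terser, simply asserting that the automorphism equivalence ``clearly'' transfers to automorphisms of $\cC$; you have spelled out the reason (any automorphism of $\cC$ restricts to one of $\cC^P$ because the induced structure on $P$ is exactly $\cC^P$), which is the right justification.
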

\begin{proof}
 This is a straightforward consequence of Hypothesis 1 (Hypothesis \ref{asm:1}). 
 
 Indeed, let $\ph(x,a)$ define a subset  $X$ of $P$. By Hypothesis 1, there is a formula $\theta(x,c)$ (with $c \in P^\cC$) that defines $X$ in $\cC^P$. Let $c_E$ be a code for $X$ in $\cC^P$. Then for every automorphism $\sigma$ of $\cC^P$, $\sigma$ fixes $X$ (as a set) if and only if it fixes $c_E$. Clearly this is also true then for any automorphism of $\cC$. Hence $c_E$ is also a code for $\ph(x,a)$ in $\cC$. 
\end{proof}

\begin{theorem}\label{thm:eq}
 Assume that $T$, or just $T^P$, eliminates imaginaries. Then the following are equivalent for a set $A$:
 \begin{enumerate}
\item $A$ is complete
\item $P^A \elem P^\cC$, and $\dcl(A) \cap P = P^A$
\item $P^A \elem P^\cC$, and $\acl(A) \cap P = P^A$
\end{enumerate}

\end{theorem}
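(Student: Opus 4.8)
The plan is to prove the cycle (i) $\Rightarrow$ (iii) $\Rightarrow$ (ii) $\Rightarrow$ (i), using the code-in-$P$ lemma (Lemma \ref{lem:eq}) as the main tool. The implication (iii) $\Rightarrow$ (ii) is immediate since $\dcl(A) \subseteq \acl(A)$, so $P^A \subseteq \dcl(A)\cap P \subseteq \acl(A)\cap P = P^A$ forces equality throughout. The implication (ii) $\Rightarrow$ (i) should also be relatively painless: if $\psi(\bar a, \bar y)$ is a formula over $A$ defining (after intersecting with $P$) a subset $X$ of $P$, then by Lemma \ref{lem:eq} $X$ has a code $c_E \in P$; this code is fixed by every automorphism of $\cC$ fixing $\bar a$ (hence $A$) pointwise, so $c_E \in \dcl(A)\cap P = P^A$. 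Then by Observation \ref{obs:defcomplete}/Corollary \ref{co:defcomplete} one recovers the defining formula $\Psi_\psi(\bar y, \bar d)$ over $P^A$ (the code $c_E$ gives a definition, and one can extract $\bar d \in P^A$ of the right form), which is exactly the internal characterization of completeness. One must also check $P^A \prec P^\cC$, but that is assumed in (ii); so this direction goes through cleanly.

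The implication (i) $\Rightarrow$ (iii) is where the real content lies, and I expect it to be the main obstacle. Suppose $A$ is complete and let $e \in \acl(A) \cap P$; I want $e \in P^A$. Since $e \in \acl(A)$, there is a formula $\chi(x, \bar a)$ over $A$ with $\models \chi(e, \bar a)$ and $|\chi(\cC, \bar a)| = m < \omega$ for some $m$; moreover, since $e \in P$, we may (intersecting with $P$, which is $0$-definable) assume $\chi(\cC,\bar a) \subseteq P$, so $\chi(\cC,\bar a) = \{e_0, \dots, e_{m-1}\} \subseteq P$ is a finite subset of $P$ containing $e$. This finite set is a definable subset of $P$ with parameter $\bar a \in A$, so by Lemma \ref{lem:eq} it has a code $c_E \in P$; and since $c_E$ is interdefinable with $\bar a$ over the empty set restricted to this set (fixed by exactly the automorphisms fixing the set), $c_E \in \dcl(A) \cap P$. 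The point is now to push from "the finite set $\{e_0,\dots,e_{m-1}\}$ is coded in $P^A$" (once we know $\dcl(A)\cap P \subseteq P^A$, which is part (ii), or directly: the set $\{e_0,\dots,e_{m-1}\}$ is definable over $A$ and its code lies in $P$, hence by completeness — applied to the formula saying "$\bar y$ is a tuple in $P$ enumerating $\chi(\cC,\bar a)$" — the code, or an enumerating tuple, can be found inside $P^A$) to "each $e_i \in P^A$". For this last step I would use completeness directly: the formula $\psi(\bar y, \bar a) := \text{"}\bar y \text{ enumerates } \chi(\cC,\bar a)\text{"}$ is over $A$ and is satisfied by a tuple from $P$, so by completeness it is satisfied by some tuple $\bar d \subseteq P^A$; but $\chi(\cC,\bar a)$ has a unique enumeration-as-a-set, so $\{d_0,\dots,d_{m-1}\} = \{e_0,\dots,e_{m-1}\}$, whence every $e_i$, in particular $e$, lies in $P^A$.

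So in summary: (iii) $\Rightarrow$ (ii) is trivial, (ii) $\Rightarrow$ (i) uses Lemma \ref{lem:eq} plus the automorphism characterization of codes plus Corollary \ref{co:defcomplete}, and (i) $\Rightarrow$ (iii) uses completeness applied to the formula expressing "$\bar y$ enumerates the finite algebraic-closure-witnessing set", noting that such a set has a canonical (set-wise unique) enumeration. The hard part is organizing the algebraic-closure argument cleanly — in particular being careful that the witnessing finite set can be taken inside $P$ (using that $P$ is $0$-definable, Hypothesis \ref{asm:1}) and that the hypothesis "$T$ or just $T^P$ eliminates imaginaries" is only needed through Lemma \ref{lem:eq}, i.e. to guarantee codes land in $P$. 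One subtlety worth flagging: in the direction (i) $\Rightarrow$ (iii), once we have $e \in P^A$ for all $e \in \acl(A)\cap P$, the reverse inclusion $P^A \subseteq \acl(A)$ and $P^A \prec P^\cC$ are immediate (the former trivially, the latter from completeness via Fact \ref{obs:complete_characterization}).
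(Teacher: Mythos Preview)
Your proposal is correct and follows the same cycle as the paper, with essentially identical arguments for (iii)$\Rightarrow$(ii) and (ii)$\Rightarrow$(i) (the paper also uses the code $c_E\in\dcl(A)\cap P=P^A$ via Lemma~\ref{lem:eq}; note that the cleaner reference for concluding completeness is Fact~\ref{obs:complete_characterization}, which only asks for definability over $P^A$, rather than Corollary~\ref{co:defcomplete}, so your worry about extracting $\bar d$ ``of the right form'' is unnecessary).

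The one place where you diverge slightly is (i)$\Rightarrow$(iii). The paper argues: the finite set $B=\{b_0,\dots,b_{k-1}\}\subseteq P$ of conjugates is $A$-definable, hence (by completeness) definable over $P^A$ in~$\cC$, hence by Hypothesis~\ref{asm:1}(ii) definable over $P^A$ in $\cC^P$, so $b\in\acl^{T^P}(P^A)=P^A$ since $P^A\prec\cC^P$. Your argument instead applies the definition of completeness directly to the formula ``$\bar y$ enumerates $\chi(\cC,\bar a)$'', obtaining an enumerating tuple already in $P^A$. Both are correct and both correctly observe that elimination of imaginaries plays no role in this direction; your version is marginally more elementary (it avoids the detour through $\acl^{T^P}$ and the explicit invocation of Hypothesis~\ref{asm:1}(ii)), while the paper's version makes the conceptual point that algebraicity over $A$ in $\cC$ collapses to algebraicity over $P^A$ in $T^P$.
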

\begin{proof}
 \noindent (i) $\implies$ (iii) [Does not require elimination of imaginaries].

Let $b \in acl(A) \cap P$, and suppose that $b = b_0, \ldots, b_{k-1}$ are all the solutions to the algebraic formula $\ph(x,a)$ over $A$. Since $A$ is complete, the set $B = \set{b_i\colon i<k}$ is definable over $P^A$, and, therefore, by Hypothesis \ref{asm:1}(ii), it is definable in $P^A$ (in $T^P$). Hence $b \in \acl^{T^P}(P^A) = P^A$.  

 \noindent (iii) $\implies$ (ii) is obvious. 
 
  \noindent (ii) $\implies$ (i).
 Assume (ii); we need only prove that for every $a \in A$, the type $\tp(a/P)$ is definable over $P^A$. This follows from the fact that the canonical parameters (in $\cC^P$) of the defining formulas are all definable over $A$ (in $\cC$). For the sake of completeness, we sketch the argument. 
 
 Let $\ph(x,y)$ be a formula, and let $c_E$ be the code for the set $X = P \cap \ph(a,y)$ as in Lemma \ref{lem:eq}. Clearly, $X$ is definable over $c_E$, and $c_E$ is definable over $a$ (since every automorphism of $\cC$ that fixes $a$ will also fix $X$, hence $c_E$). So $c_E \in \dcl(A) \cap P \subseteq P^A$, as required.  
 
%
%
 
\end{proof}

\begin{co}\label{co:eq}
 	Assume $T^P$ eliminates imaginaries. Let $A$ be a set with $P^A \elem P^\cC$, and $a \in \cC$ a tuple. Then $A$ is complete and $\tp(a/A) \in S_*(A)$ if and only if $\dcl(Aa \cap P) = P^A$ if and only if $\acl(Aa \cap P) = P^A$.
\end{co}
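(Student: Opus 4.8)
The plan is to read this corollary off from Theorem~\ref{thm:eq} applied to the set $B := A \cup a$, together with the definition of $S_*$ and one elementary closure remark. The standing hypotheses (namely, that $T^P$ eliminates imaginaries, and that $P^A \elem P^\cC$) are precisely what is needed to invoke Theorem~\ref{thm:eq} for $B$.

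First I would record a triviality used in both directions: if $B$ is complete and $A \subseteq B$ with $P^A = P^B$, then $A$ is complete. This is immediate from Definition~\ref{dfn:complete}: given $\psi(\bar x,\bar b)$ with $\bar b \subseteq A \subseteq B$ and $\models (\exists \bar x \in P)\,\psi(\bar x,\bar b)$, completeness of $B$ supplies a witness already in $P \cap B = P \cap A$. Using this, I would unwind the left-hand side of the corollary: by Definition~\ref{dfn:startypes}, the assertion ``$A$ is complete and $\tp(a/A) \in S_*(A)$'' is equivalent to ``$B$ is complete and $P^B = P^A$''. The forward implication is just the definition of $S_*(A)$; for the backward one, the closure remark recovers that $A$ itself is complete (so that $S_*(A)$ is even defined), after which the definition of $S_*(A)$ applies.

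Next I would apply Theorem~\ref{thm:eq} to $B$: since $T^P$ eliminates imaginaries, $B$ is complete iff $P^B \elem P^\cC$ and $\dcl(B) \cap P = P^B$, iff $P^B \elem P^\cC$ and $\acl(B) \cap P = P^B$. It then remains only to absorb the side condition $P^B = P^A$ and the hypothesis $P^A \elem P^\cC$ into the $\dcl$/$\acl$ equalities. For this I would note that the bare equality $\dcl(B) \cap P = P^A$ already forces $P^B = P^A$ (indeed $P^B \subseteq \dcl(B) \cap P$, while $P^A \subseteq P^B$ since $A \subseteq B$), and likewise with $\acl$ in place of $\dcl$, using $\dcl \subseteq \acl$; and that, once $P^B = P^A$, the condition $P^B \elem P^\cC$ is just the given hypothesis. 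Putting these together, ``$B$ complete and $P^B = P^A$'' is equivalent both to $\dcl(B) \cap P = P^A$ and to $\acl(B) \cap P = P^A$, which is the claimed chain of equivalences.

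I do not expect any real obstacle; the argument is bookkeeping layered on Theorem~\ref{thm:eq}. The two points worth a moment's care are that $S_*(A)$ is defined only for complete $A$, so the backward direction genuinely needs the closure remark to pass completeness down from $B$ to $A$; and that the hypothesis $P^A \elem P^\cC$ is exactly what makes the ``$\dcl$/$\acl$-condition implies completeness'' halves of Theorem~\ref{thm:eq} available for $B$ (those halves being where elimination of imaginaries, via Lemma~\ref{lem:eq}, enters).
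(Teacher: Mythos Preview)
Your proposal is correct and is exactly the argument the paper has in mind: the corollary is stated immediately after Theorem~\ref{thm:eq} with no proof, and the intended derivation is precisely to apply that theorem to $B = A \cup a$ and unwind the definition of $S_*(A)$, as you do. Your attention to the two bookkeeping points (passing completeness down from $B$ to $A$, and using the hypothesis $P^A \prec P^{\mathcal C}$ to recover $P^B \prec P^{\mathcal C}$) is appropriate and fills in exactly what the paper leaves implicit.
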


\section{Minimal types}

One of the most basic tools in stability theory is the existence of minimal types. More generally, whenever a theory admits a ``reasonable'' notion of rank for partial types, every type (with a rank) with a particular property can be extended to a minimal such type. This is also true in our context:

\begin{lem}\label{lem:minimal}
	Let $A$ be complete,
$p_0$ a 
partial type over $A$, and let $X$ be a property (not necessarily definable) of partial types over $A$ such that $X$ holds for $p$. Assume, in addition, that $R^n_A(p,\Delta _1,\Delta _2,\lambda)=k < \omega$. 

Then there exists $p_1$ such that:

\begin{enumerate}
 \item
 	$p_1$ is a partial type over $A$
\item
	$p_0 \subseteq p_1$
\item 
	$X$ holds for $p_1$
\item
	$R^n_A(p_1,\Delta _1,\Delta _2,\lambda)$ is minimal with respect to the previous clauses. 
\end{enumerate}
\end{lem}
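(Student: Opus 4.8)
The plan is to exploit the fact that, under the hypothesis of the lemma, the rank $R^n_A(\cdot,\Delta_1,\Delta_2,\lambda)$ restricted to the partial types we care about takes only finitely many values, so that a minimum is automatically attained; the property $X$ plays no role beyond cutting down the family of types under consideration, and in particular it need not be definable.

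First I would record the (standard, and essentially immediate from Definition \ref{R}) \emph{monotonicity} of the rank: if $q \subseteq p$ are partial types over $A$, then $R^n_A(p,\Delta_1,\Delta_2,\lambda) \le R^n_A(q,\Delta_1,\Delta_2,\lambda)$. This is proved by induction on ordinals $\alpha$, showing $R^n_A(p,\ldots) \ge \alpha \implies R^n_A(q,\ldots) \ge \alpha$: the case $\alpha=0$ is just consistency (and $p$ consistent implies $q$ consistent), limit $\alpha$ is trivial, and at a successor $\alpha=\beta+1$ one uses that every finite subtype of $q$ is also a finite subtype of $p$, so the configuration of $\Delta_1$-types (resp. the $P$-definitions) witnessing $R^n_A(p,\ldots)\ge\alpha$ for a given finite $q'\subseteq q$ also witnesses the corresponding clause for $q$. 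This is among the basic properties of the rank recalled in section 5 of \cite{ShUs322a}.

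Next I would combine monotonicity with the hypothesis $R^n_A(p_0,\Delta_1,\Delta_2,\lambda)=k<\omega$: for \emph{any} partial type $p_1$ over $A$ with $p_0\subseteq p_1$ we get $0 \le R^n_A(p_1,\Delta_1,\Delta_2,\lambda) \le k$ --- the lower bound because $p_1$, being a type, is consistent, and the upper bound by monotonicity, which in particular rules out the value $\infty$. Hence
\[
 S := \bigl\{\, R^n_A(p_1,\Delta_1,\Delta_2,\lambda) \ : \ p_1 \text{ a partial type over } A,\ p_0 \subseteq p_1,\ X \text{ holds for } p_1 \,\bigr\}
\]
is a non-empty subset of $\{0,1,\dots,k\}$ --- non-empty because $p_0$ itself witnesses $k\in S$ (here using the hypothesis that $X$ holds for $p_0$). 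Being a non-empty set of natural numbers, $S$ has a least element $m$.

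Finally, pick any partial type $p_1$ over $A$ with $p_0\subseteq p_1$, with $X$ holding for $p_1$, and with $R^n_A(p_1,\Delta_1,\Delta_2,\lambda)=m$. Clauses (1)--(3) hold by the choice of $p_1$, and clause (4) --- minimality of the rank subject to (1)--(3) --- holds precisely because $m=\min S$. I do not expect a real obstacle: the only points requiring care are that the rank is monotone under extension and that this keeps it finite (so that a genuine minimum, rather than merely an infimum, is attained), and the observation that no definability of $X$ is needed, since we are simply minimizing over a set of integers.
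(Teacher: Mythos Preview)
Your argument is correct. The paper itself gives no proof of this lemma, treating it as self-evident (the ranks in question all lie in the finite set $\{0,1,\ldots,k\}$ by monotonicity, so a minimum is attained); your proposal simply spells out this reasoning.
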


We call a type $p'$ satisfying the clauses (i) -- (iv) of the Lemma above \emph{a minimal extension of $p$ with respect to $\Delta_1$, $\Delta_2$, $\lam$, and $X$}. If $p$ is clear from the context, or $p = [x=x]$, we say that $p'$ is a \emph{minimal type} with respect to $\Delta_1$, $\Delta_2$, $\lam$, and $X$. If $\Delta_1$, $\Delta_2$, $\lambda$ are also clear from the context (e.g., in this paper $\lam$ will always be 2), then we simply say that $p'$ is minimal with respect to $X$. 

\medskip

However, one has to be careful with what it means for a type to be minimal in our case. In classical stability theory,  minimality with respect to Shelah's  2-rank (and property $X$) has several strong consequences, such as uniqueness of extensions with the prescribed property. This would be the case in our setting with a minimal type of even rank. Indeed, given a type $p$, which is minimal with respect to  $\Delta_1$, $\Delta_2$, $\lam=2$, $X$, whose rank is even, by the definition of the rank, one can not split $p$ into two two contradictory types, still satisfying the property $X$, using a $\Delta_1$-formula, which leads to uniqueness, at least with respect to $\Delta_1$-extensions. By iterating over all formulas, one obtains a type which is minimal with respect to $X$. However, the problem is that we can not rule out the case of the rank $k$ being odd. The following Lemma shows that, at least in certain cases, we do not need to worry about this. This will allow us to obtain ``truly'' minimal types in these situations. 

%

\begin{lem}\label{lem:rankeven}
 	
	Let $A$ be complete,
	$p$ a finite partial type over $A$ such that $R^n_A(p,\Delta _1,\Delta _2, \lam) < \omega$ for some $\Delta_1, \Delta_2, \lambda$. 
	
	Let 
	$p'$ be a minimal extension of $p$ with respect to $\Delta_1$,   $\Delta_2$, $\lam$, and the property ``finite''. Let $R^n_A(p',\Delta _1,\Delta _2,\lambda)=k < \omega$. Then $k$ is even.


\end{lem}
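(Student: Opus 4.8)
The plan is to derive a contradiction from the assumption that $k = R^n_A(p', \Delta_1, \Delta_2, \lambda)$ is odd. The key idea is that if $k$ is odd, then clause (iv) of Definition \ref{R} applies at the step from $k$ to $k+1$: since $R_A(p', \Delta_1, \Delta_2, \lambda) \ge k$ but not $\ge k+1$, there must be some finite $q \subseteq p'$, some formulae $\psi_i \in \Delta_2$, and parameters $\bar b_i \in A$ witnessing the \emph{failure} of the $(k+1)$-condition, i.e., for which no tuples $\bar d_i \in A \cap P$ yield $R_A(r, \Delta_1, \Delta_2, \lambda) \ge k$ where $r = q \cup \{(\forall \bar z \subseteq P)[\psi_i(\bar x, \bar b_i, \bar z) \equiv \Psi_{\psi_i}(\bar z, \bar d_i)]\}$. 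The strategy is to contradict this by producing a single such $r$ extending $p'$ with $R_A(r, \ldots) \ge k$ — in fact, since $p'$ is $R$-minimal among finite extensions of $p$ with the same value, it will suffice to show that $r$ can be taken to be a finite partial type over $A$ (so that minimality of $p'$ forces $R_A(r) \ge R_A(p') = k$).

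First I would observe that, by completeness of $A$ (here one invokes Theorem \ref{co:complete_char}, say clause (iv), or the uniform definability from Corollary \ref{co:defcomplete}), for \emph{any} finite collection of formulae $\psi_i(\bar x, \bar b_i, \bar y_i)$ over $A$ and any finite type $p_0$ over $A$, there exist $\bar d_i \in P^A$ such that $p_0 \cup \{(\forall \bar y_i \subseteq P)[\psi_i(\bar x, \bar b_i, \bar y_i) \equiv \Psi_{\psi_i}(\bar y_i, \bar d_i)] : i < k\}$ is consistent. Apply this with $p_0 = q$ (a suitable finite subtype of $p'$; by Fact \ref{rank} we may choose $q \subseteq p'$ with $R_A(q, \Delta_1, \Delta_2, 2) = R_A(p', \ldots) = k$, taking $\lambda = 2$ as the paper does) and with the very $\psi_i, \bar b_i$ coming from the putative witness of failure above. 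This produces $\bar d_i \in A \cap P$ and a consistent finite type $r = q \cup \{(\forall \bar z_i \subseteq P)[\psi_i(\bar x, \bar b_i, \bar z_i) \equiv \Psi_{\psi_i}(\bar z_i, \bar d_i)] : i < k\}$ over $A$.

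Next I would note that $r \cup p'$ is itself a finite partial type over $A$: it is consistent because $p' \in$ (a $*$-type, once we know $p'$ extends to one — but more directly) any realization $\bar a$ of $p'$ has, by Observation \ref{obs:uniformdef} applied inside $\cC$, its $\psi_i$-type over $P$ defined by $\Psi_{\psi_i}(\bar z_i, \bar d_i')$ for \emph{some} $\bar d_i' \in P^\cC$; the content of completeness is that such $\bar d_i'$ can be found in $P^A$, and consistency of $r$ already tells us the $\bar d_i$ we found are realized by some tuple satisfying $q$ — the cleanest route is to redo the argument of Theorem \ref{co:complete_char}(i)$\Rightarrow$(iv) directly with $p_0 = p'$ in place of a finite type, which is legitimate since the relevant $\Phi(\bar z)$ is a first-order formula over $A$ and we only need $\cC \models (\exists \bar z \subseteq P)\Phi(\bar z)$, witnessed by any realization of $p'$ together with definitions from Observation \ref{obs:uniformdef}. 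Then $p' \cup r$ is a finite partial type over $A$ extending $p$, so by $R$-minimality of $p'$ we get $R_A(p' \cup r, \Delta_1, \Delta_2, \lambda) \ge k$, hence $R_A(r, \ldots) \ge R_A(p' \cup r, \ldots) \ge k$ by monotonicity of the rank under $\subseteq$ — contradicting that $\psi_i, \bar b_i$ witnessed the failure of the $(k+1)$-step via clause (iv). Therefore $k$ cannot be odd.

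The main obstacle I anticipate is the bookkeeping in the last step: matching the exact shape of the formulae in clause (iv) of Definition \ref{R} (the biconditionals $\psi_i(\bar x, \bar b_i, \bar z) \equiv \Psi_{\psi_i}(\bar z, \bar d_i)$, with the correct variable partition) against the shape produced by the completeness characterization in Theorem \ref{co:complete_char}(iv) or Corollary \ref{co:stardefincluded}, and making sure that the finite subtype $q$ on which the rank of $p'$ is computed (via Fact \ref{rank}) is compatible with the finite subtype demanded by the $R$-recursion. One also has to be slightly careful that "minimal extension of $p$ with respect to the property finite'' genuinely forces $R_A(p' \cup r) \ge R_A(p')$: this is where we use that $p' \cup r$ is again finite and over $A$ and contains $p$, so it is a competitor in the minimization defining $p'$, whence its rank is $\ge k$. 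Apart from this, the argument is a direct transcription of the classical "minimal type of odd rank would split, but completeness prevents the obstruction'' phenomenon into the two-sorted rank of Definition \ref{R}.
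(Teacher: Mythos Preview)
Your proposal is correct and follows essentially the same approach as the paper: use completeness of $A$ (Theorem \ref{co:complete_char}(iv)) with $p_0 = p'$ to produce $\bar d_i \in P^A$ making $p'' := p' \cup \{\text{the biconditionals}\}$ a consistent finite type over $A$, invoke minimality of $p'$ among finite extensions to get $R(p'') \ge k$, then monotonicity ($r \subseteq p''$) gives $R(r) \ge k$, which verifies clause (iv) of Definition \ref{R} and forces $R(p') \ge k+1$. Your detour through $p_0 = q$ and Fact \ref{rank} is unnecessary --- as you yourself observe, taking $p_0 = p'$ directly is the clean route, and this is exactly what the paper does.
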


\begin{proof}
Assume $k$ is odd; we shall show that 
$R^n_A(p',\Delta _1,\Delta _2,\lambda)\ge k+1$. 
	
\smallskip

Let $\psi
_i\in \Delta _2, \bar b_i\in A$ as in clause (iv) of the definition of the rank (Definition \ref{R}).  By Corollary \ref{co:complete_char}, since $A$ is complete,  
there are $\d_i \in P^A$ such that $p'(\x)$ is consistent with the set 
\[
	\pi(\x,\d) = 
	\left\{(\forall\bar z_i \subseteq P) \left[\psi _i(\bar x,\bar d_i,\bar z_i)\longleftrightarrow \Psi_{\psi_i}(\bar z_i,\bar c_i)\right] \colon i
  \right\}
\]

Let $p''(\x) = p'(\x) \cup \pi(\x,\c)$. It is still a finite type over $A$ 
Hence by minimality of $p'$, $R(p''(\x), \Delta_1, \Delta_2, \lam) = R(p'(\x), \Delta_1, \Delta_2, \lam) = k$. 

Now let $r_i=q(\bar x)\cup \{(\forall\bar
z\subseteq P) \left[\psi _i(\bar x,\bar b_i,\bar z)\equiv\Psi_{\psi
_i}(\bar z,\bar d_i)\right]$ for all  $i$ (again, as in clause (iv) of the definition of the rank). Note that $r_i \subseteq p''(\x)$, hence 
$R(r_i(\x), \Delta_1, \Delta_2, 2) \ge R(p''(\x), \Delta_1, \Delta_2, 2) \ge k$ for all $i$. 

So by (iv) of Definition \ref{R}, $R_A(p,\Delta _1,\Delta _2,\lam)\geq k+1$, and we are done.


\end{proof}

\medskip

\section{Relevant notions of isolation}

%

Let us recall the definitions of the notions of isolation relevant for the discussion in this article. 

\begin{de}
\begin{enumerate}
\item A (partial) type $p$ over a set $A$ is called \emph{locally isolated} (l.i.) if for every formula $\ph(x,y)$ there exists a formula $\theta_\ph(x,a_\ph) \in p$ such that $\theta_\ph(x,a_\ph) \vdash p\rest\ph$. We say that $p$ is locally isolated (l.i.) over $B \subseteq A$ if for every $\ph(x,y)$, its isolating formula $\theta_\ph$ is over $B$ (so $a_\ph \in B$). 

\item A (partial) type $p$ over a set $A$ is called \emph{$\lam$-isolated} if there exists a subset $r \subseteq p$ with $|r|<\lam$ such that $r \equiv p$. We say that $p$ is $\lam$-isolated over $B \subseteq A$ if $r$ as above is a partial type over $B$.

\end{enumerate}

\begin{re}
\begin{enumerate}
\item So $p$ is locally isolated if for every formula $\ph$, the restriction of $p$ to a $\ph$-type is implied by a single formula in $p$ (which does not itself have to be a $\ph$-formula). 
\item A type $p$ is isolated iff it is $\aleph_0$-isolated. 
\end{enumerate}
 
\end{re}

\smallskip

The following Lemma with play a crucial role in our constructions It states that a locally isolated type over a complete set is \emph{always} weakly orthogonal to $P$.

\begin{lem}\label{lem:isolated_star}
 Let $B$ be a complete set, $p \in S(B)$ locally isolated. Then $p \in S_*(B)$. 
\end{lem}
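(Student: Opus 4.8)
The plan is to use the internal characterization of being a *-type over a complete set from Corollary \ref{co:type}, clause (2): it suffices to show that for every formula $\psi(\x,\b,\z)$ over $B$, there is $\d \in P^B$ such that the formula $\left[\left((\exists \z \subseteq P)\psi(\x,\b,\z)\right) \longrightarrow \psi(\x,\b,\d)\right]$ belongs to $p$. So fix such a $\psi$. The idea is to exploit local isolation: apply it to the formula $\ph(x,y) = (\exists \z \subseteq P)\psi(x,y,\z)$ (or, more precisely, a formula capturing the relevant $\psi$-information together with the existential statement) to extract a single formula $\theta(\x,\a) \in p$, over $B$, that already decides $p\rest\ph$.

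First I would observe that the statement $\cC \models \theta(\x,\a) \longrightarrow (\exists \z \subseteq P)\psi(\x,\b,\z)$ either holds on all of $[\theta(\x,\a)]$ or, depending on the value of $p\rest\ph$, the antecedent $(\exists \z \subseteq P)\psi(\x,\b,\z)$ is outright inconsistent with $p$. In the latter case $\neg(\exists \z \subseteq P)\psi(\x,\b,\z) \in p$, and then the desired implication is in $p$ vacuously (for any $\d \in P^B$, using that $P^B \ne \emptyset$ since $P$ is infinite and, for $B$ complete, $P^B \elem P^\cC$). In the former (main) case, consider the sentence $\cC \models (\exists \x)\theta(\x,\a)$ — witnessed by any realization of $p$ — and more usefully the formula $\Phi(\z) = (\exists \x)\left(\theta(\x,\a) \wedge \psi(\x,\b,\z)\right)$. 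Since $\theta(\x,\a) \vdash (\exists \z \subseteq P)\psi(\x,\b,\z)$ on $p$, and $\theta$ is satisfied by a realization of $p$, we get $\cC \models (\exists \z \subseteq P)\Phi(\z)$. As $\Phi(\z)$ is over $B$ (its parameters $\a, \b$ lie in $B$) and $B$ is complete, there is $\d \in P^B$ with $\cC \models \Phi(\d)$, i.e.\ $(\exists \x)(\theta(\x,\a) \wedge \psi(\x,\b,\d))$.

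Then I would argue that this $\d$ works: I claim $\theta(\x,\a) \vdash \psi(\x,\b,\d)$, hence $\psi(\x,\b,\d) \in p$ and \emph{a fortiori} the implication $\left[(\exists \z \subseteq P)\psi(\x,\b,\z)\right] \longrightarrow \psi(\x,\b,\d)$ is in $p$. For this to go through cleanly I need to have set up the local isolation at a formula $\ph$ fine enough that $\theta(\x,\a)$ decides not just whether $(\exists \z \subseteq P)\psi(\x,\b,\z)$ holds but the full $\psi(\x,\b,\cdot)$-type over $P^\cC$ of realizations — that is, apply local isolation to (a formula coding) the definition $\Psi_\psi(\y,\z)$ of the $\psi$-type over $P$ from Observation \ref{obs:defcomplete}, so that $\theta$ pins down the canonical parameter of $\{\z : \psi(\x,\b,\z)\}$ among elements of $P^\cC$; once that parameter is forced by $\theta$ to equal our $\d \in P^B$ on a consistent (with $p$) subset, it is forced everywhere on $[\theta]$. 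The main obstacle is precisely this bookkeeping: choosing the right formula $\ph(x,y)$ to feed into local isolation so that a single isolating formula $\theta \in p$ simultaneously controls the existential witness and localizes the defining parameter inside $P^B$; once the right $\ph$ is identified, completeness of $B$ does the rest via the usual ``pull the witness into $P^B$'' compactness-free argument, exactly as in the proof of Theorem \ref{co:complete_char}.
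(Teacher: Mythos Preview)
Your diagnosis of the gap is exactly right: applying local isolation to the existential formula $\varphi(\x,\y)=(\exists\z\subseteq P)\psi(\x,\y,\z)$ only buys you a $\theta(\x,\a)\in p$ that decides whether some witness exists, not which one; so from $\cC\models(\exists\x)(\theta(\x,\a)\wedge\psi(\x,\b,\d))$ you cannot conclude $\theta(\x,\a)\vdash\psi(\x,\b,\d)$. Your proposed fix---apply local isolation instead to a formula built from the defining scheme $\Psi_\psi$---is precisely what the paper does, and once you commit to it the detour through clause~(ii) of Corollary~\ref{co:type} and the case split become unnecessary.

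The paper's execution is the clean version of your final paragraph. Set
\[
\theta(\x,\y,\bar u)\;=\;(\forall\z\subseteq P)\bigl[\psi(\x,\y,\z)\longleftrightarrow\Psi_\psi(\z,\bar u)\bigr],
\]
and use Remark~\ref{re:stardefinincluded} (clause~(iii) of Corollary~\ref{co:type}) as the target: it suffices to find, for each $\b\in B$, some $\d\in P^B$ with $\theta(\x,\b,\d)\in p$. Apply local isolation to the formula $\theta$ to get a finite $p_0\subseteq p$ with $p_0\vdash p\rest\theta$. Now Theorem~\ref{co:complete_char}(v), applied to $p_0$ and $\psi(\x,\b,\y)$, hands you $\d\in P^B$ such that $p_0(\x)\cup\{\theta(\x,\b,\d)\}$ is consistent. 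Since $p_0$ already implies a complete $\theta$-type, any $\theta$-formula consistent with $p_0$ must lie in that $\theta$-type, which is $p\rest\theta$; hence $\theta(\x,\b,\d)\in p$. That is the whole proof---no case analysis, no separate existential witness.

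If you prefer to stay with clause~(ii), the same template works: apply local isolation not to $\varphi$ but to the implication formula $\rho(\x,\y,\w)=\bigl[(\exists\z\subseteq P)\psi(\x,\y,\z)\bigr]\to\psi(\x,\y,\w)$ itself, then invoke Theorem~\ref{co:complete_char}(iii) in place of (v). The moral in either route is the same: isolate at the formula you actually need to place in $p$, so that ``consistent with the isolating piece'' already forces ``in $p$''.
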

\begin{proof}
Consider the formula $\theta(\x,\y,\bar u) = (\forall \z \subseteq P)\left[ \psi(\x,\y,\z) \longleftrightarrow \Psi_\psi(\z,\y,\bar u)\right]$. 

 By Remark \ref{re:stardefinincluded}, it is enough to show that for every formula $\psi(\x,\b,\z)$ over $A$ there is $\d \in P^A$ such that 
\[
\theta(\x,\b,\d) = (\forall \y \subseteq P) \left[ \psi(\x,\b,\y) \longleftrightarrow \Psi_\psi(\y,\d) \right]  \in p
\]
where  $\Psi_\psi(\y,\z)$ is as in Corollary \ref{co:stardefincluded}.

Since $p$ is locally isolated, there is a finite $p_0 \subseteq p$ such that $p_0 \vdash p \rest \theta$. 

By clause (v) of Theorem \ref{co:complete_char}, 
 there is  $\d \subseteq P^A$ such that  the following is a type over $A$:
\[
\pi(\x) = p_0(\x) \cup \left\{  \theta(\x,\b,\d) 
\right\}
%
\]

In other words, there is a complete type $q$ over $B$ extending $\pi(\x)$. But $p_0$ implies a complete $\theta$-type; so $q\rest\theta = p\rest\theta$. In particular, $\theta(\x,\b,\d) \in p$, as required.

\end{proof}

\end{de} 

\begin{de}\label{dfn:lprimary}
Let $N$ be a model, $P^N \subseteq B \subseteq N$.
\begin{enumerate}
\item
	We say that the sequence $\d = \{d_i:i<\alpha\} \subseteq N$ is a \emph{local construction} over $B$ in $N$ if for all $i<\al$, the type $tp\{d_i/B\cup\{d_j:j<i\})$ is locally isolated.

\item
	We say that a set $C \subseteq N$ is $N$ is \emph{ locally constructible} (l.c.) over $B$ in $N$ if 
	there is a local construction $\d$ over $B$ in $N$. 
	
\end{enumerate}
\end{de}

\begin{de}\label{dfn:primary}
Let $N$ be a model, $P^N \subseteq B \subseteq N$.
\begin{enumerate}
\item
	We say that the sequence $\d = \{d_i:i<\alpha\} \subseteq N$ is a $\lam$-construction over $B$ in $N$ if for all $i<\al$, the type $tp\{d_i/B\cup\{d_j:j<i\})$ is $\lambda$-isolated.

\item
	We say that a set $C \subseteq N$ is $N$ is \emph{$\lam$-constructible} over $B$ in $N$ if 
	there is a $\lam$-construction $\d$ over $B$ in $N$. 
	
	In particular, we say that $N$ is
	\emph{$\lam$-constructible} over $B$ if 
there is a construction $N=B\cup\{d_i:i<\al\}$ such that for all $i<\lam$ the type 
$tp\{d_i/B\cup\{d_j:j<i\})$ is $\lambda$-isolated. 
\item 
	We say that a model $N$ is \emph{$\lambda$-primary} over $B$ if it is $\lam$-constructible and $\lam$-saturated. 
\end{enumerate}
\end{de}

\begin{de}\label{dfn:prime}
Let $N$ be a model, $P^N \subseteq B \subseteq N$.
\begin{enumerate}
\item 

	We say that $N$ is \emph{locally atomic} (l.a.) over $B$ if 
for every $\bar d\subseteq N$, $tp(\bar d,B)$ is
locally isolated over $B$. 

\item 

	We say that $N$ is \emph{$\lam$-atomic} over $B$ if 
for every $\bar d\subseteq N$, $tp(\bar d,B)$ is
$\lambda$-isolated over some $B_{\bar d}\subseteq B, |B_{\bar
d}|<\lambda$.
%
%
\end{enumerate}
\end{de}

\begin{obs}\label{obs:localimpliestplus}
	Let $A$ be a set, and $p\in S(A)$ locally isolated. Then $p$ is $|T|^+$-isolated. 
\end{obs}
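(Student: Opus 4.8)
The plan is a straightforward cardinality-counting argument, using only the definition of local isolation. For each formula $\ph(x,y)\in L(T)$, local isolation of $p$ supplies a formula $\theta_\ph(x,a_\ph)\in p$ with $\theta_\ph(x,a_\ph)\vdash p\rest\ph$. First I would collect these into a single subtype: set
\[
	r \;=\; \{\,\theta_\ph(x,a_\ph) : \ph(x,y)\in L(T)\,\}\;\subseteq\; p .
\]
Since $|L(T)|\le |T|$, we get $|r|\le |T| < |T|^+$, so $r$ is a small enough subtype; it remains only to verify that $r\equiv p$ (equivalently, $r\vdash p$), which will show $p$ is $|T|^+$-isolated.

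To see $r\vdash p$, let $\chi(x)\in p$ be arbitrary and display its parameters, writing $\chi(x)=\psi(x,b)$ for some $\psi(x,y)\in L(T)$ and $b\in A$. Then $\psi(x,b)$ is one of the $\psi$-formulas in $p$, i.e.\ $\psi(x,b)\in p\rest\psi$; since $\theta_\psi(x,a_\psi)\in r$ and $\theta_\psi(x,a_\psi)\vdash p\rest\psi$, we conclude $\theta_\psi(x,a_\psi)\vdash \psi(x,b)=\chi(x)$, hence $r\vdash\chi$. As $\chi$ was arbitrary, $r\vdash p$, so $r\equiv p$ and $p$ is $|T|^+$-isolated.

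There is no real obstacle here; this is purely a bookkeeping observation. The only two small points to keep straight are the convention $|L(T)|\le|T|$ (guaranteeing $|r|\le|T|$) and the remark that each formula in $p$, with its parameters made explicit, is an instance $\psi(x,b)$ of an $L(T)$-formula $\psi(x,y)$, so that the local isolation hypothesis for $\psi$ applies to it.
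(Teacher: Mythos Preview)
Your argument is correct and is exactly the standard bookkeeping proof one expects; the paper itself states this as an observation without proof, so your write-up is precisely what is implicitly intended.
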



\section{The existence property}


In this section we prove the full existence property for fully stable countable theories. 


\begin{lem}\label{lem:li} 
Assume that $T$ is countable. 
\begin{enumerate}
\item 
Let $B$ be a stable set, $p(\bar x)$  be
a finite $m$-type over $B$, $\psi(\x,\y)$ a formula. Then there is
a finite $\psi$-type $q(\bar x)$ over $B$ 
such that 
$p(\bar x)\cup q(\bar x)$ is consistent, and it implies a complete $\psi$-type over $B$. 


\item 
Let $B$ be a stable set, $p(\bar x)$  be
a finite $m$-type over $B$. Then there is
$q(\bar x)$ such that $|q(\bar x)|\leq |T| = \aleph_0$,
$p(\bar x)\cup q(\bar x)$ consistent, and there is $r\in S_*(B)$ such
that $r$ is locally isolated, and $p(\bar x)\cup q(\bar x)\equiv r(\bar x)$.

In particular,  $r(\bar x)$ is
$\aleph_1$-isolated.

\end{enumerate}

\end{lem}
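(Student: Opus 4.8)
The plan is to prove (1) first and then bootstrap (2) out of it by iterating over all formulas. For (1), the key tool is the rank $R^n_B(\cdot,\Delta_1,\Delta_2,2)$ together with the stability of $B$ via Theorem \ref{thm:stablerank}. Fix $\Delta_1 = \{\psi\}$ and $n = m$; by Theorem \ref{thm:stablerank}(ii), since $B$ is stable, there is a finite $\Delta_2$ and a finite bound $k$ with $R^n_B(\bar x = \bar x,\Delta_1,\Delta_2,2)\le k$. In particular $R^n_B(p,\Delta_1,\Delta_2,2)\le k<\omega$. Now apply Lemma \ref{lem:minimal} to $p$ with the property $X = $ ``finite'' (which $p$ has): one obtains a finite $p_1\supseteq p$ whose rank $R^n_B(p_1,\Delta_1,\Delta_2,2)$ is minimal among finite extensions of $p$. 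By Lemma \ref{lem:rankeven} this minimal rank is even. The claim is that $p_1$ (or rather its restriction to a $\psi$-type, which is what we want to isolate) already does the job: minimality of an even rank with respect to $\Delta_1 = \{\psi\}$-splitting means, unwinding clause (iii) of Definition \ref{R}, that one cannot split $p_1$ into two explicitly contradictory $\psi$-types over $B$ each of rank $\ge k_1 := R^n_B(p_1,\dots)$; since any consistent extension has rank $\le k_1$ and a finite such extension has rank exactly $k_1$ by minimality, this forces $p_1$ to already decide, up to equivalence, a single complete $\psi$-type over $B$. More carefully: by Fact \ref{rank}, pick a finite $q^*\subseteq p_1$ realizing the rank; then set $q(\bar x)$ to be the $\psi$-part of $q^*$ together with finitely many formulas witnessing the decision of the $\psi$-type. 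I should check that $q$ can be taken to be a genuine $\psi$-type — this is where I may need to be slightly careful about what ``$\psi$-type'' means (Boolean combinations of instances of $\psi$), but since $\Delta_1$-types in Definition \ref{R} are exactly conjunctions of $\psi$-instances and their negations, this is built in.

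For (2), enumerate all formulas $\langle \psi_j(\bar x,\bar y_j) : j<\omega\rangle$ of $L(T)$ (possible since $T$ is countable). Build an increasing chain $p = p^0 \subseteq p^1 \subseteq \cdots$ of finite types over $B$: given $p^j$, apply part (1) with the formula $\psi_j$ to get a finite $\psi_j$-type $q^j$ over $B$ with $p^j\cup q^j$ consistent and implying a complete $\psi_j$-type over $B$; set $p^{j+1} = p^j\cup q^j$. Let $q(\bar x) = \bigcup_{j<\omega} q^j$, so $|q|\le\aleph_0 = |T|$, and $p\cup q$ is consistent (finite satisfiability is clear since each $p^j$ is consistent, and $\mathcal C$ is saturated enough). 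Let $r\in S(B)$ be the unique complete type extending $p\cup q$: it is unique because for each $j$, $p\cup q$ decides the complete $\psi_j$-type over $B$, and every formula over $B$ lies in some $\psi_j$-type. Thus $p\cup q \equiv r$, and moreover $r$ is locally isolated: for each $\psi_j$, the finite subtype $p^{j+1}\subseteq r$ implies $r\rest\psi_j$ by construction, so the isolating formula for $\psi_j$ can be taken to be $\bigwedge p^{j+1}$, which is over $B$. Hence $r$ is locally isolated, and by Lemma \ref{lem:isolated_star}, $r\in S_*(B)$. Finally, $\aleph_1$-isolation of $r$ follows from Observation \ref{obs:localimpliestplus} (or directly: $p\cup q$ is a subset of $r$ of size $\le\aleph_0<\aleph_1$ equivalent to $r$).

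The main obstacle I anticipate is the precise argument in part (1) that minimality of the rank, together with its being even, yields that $p_1$ decides a complete $\psi$-type over $B$ \emph{finitely} — i.e.\ that one can extract the desired finite $\psi$-type $q$ rather than merely a complete one. The even-rank case is exactly the one where Definition \ref{R}(iii) gives a branching obstruction: if $p_1$ did not decide some $\psi$-instance $\psi(\bar x,\bar c)$ with $\bar c\in B$, then $p_1\cup\{\psi(\bar x,\bar c)\}$ and $p_1\cup\{\neg\psi(\bar x,\bar c)\}$ would be two explicitly contradictory finite $\Delta_1$-extensions of $p_1$, each necessarily of rank $\ge$ the minimal rank $k_1$ by minimality (they are finite extensions with property $X$), contradicting that $R^n_B(p_1,\dots) = k_1$ is not $\ge k_1+1$ via clause (iii) — wait, I need $k_1$ even for clause (iii) to apply with $\beta = k_1-1$ odd... actually clause (iii) with $\alpha = k_1+1$, $\beta = k_1$ requires $\beta$ even, which holds by Lemma \ref{lem:rankeven}. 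So the branching would give $R^n_B(p_1,\dots)\ge k_1+1$, the desired contradiction. Extracting the \emph{finite} $q$ then uses Fact \ref{rank} to reduce to a finite subtype and compactness to collect the finitely many $\psi$-instances decided; I expect this to be routine once the branching argument is in place, but it is the step to write out with care.
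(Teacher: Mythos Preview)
Your approach is correct and essentially identical to the paper's: minimize the rank $R(\cdot,\{\psi\},\Delta_2,2)$ over finite extensions of $p$, invoke Lemma \ref{lem:rankeven} to get that the minimal rank is even, and then use clause (iii) of Definition \ref{R} to rule out any undecided $\psi$-instance; for part (ii), iterate over an enumeration of formulas and apply Lemma \ref{lem:isolated_star}. The paper phrases the contradiction in (i) contrapositively (``at least one branch drops in rank, contradicting minimality'') while you phrase it directly (``both branches have rank $\ge k_1$, so the rank was $\ge k_1+1$''), but these are the same argument.

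Two small remarks. First, your worry about extracting a \emph{finite} $q$ is misplaced: the $p_1$ you obtain from Lemma \ref{lem:minimal} with $X=$ ``finite'' is already finite, so you may simply take $q=p_1$ (or $p_1\setminus p$); there is no need to invoke Fact \ref{rank} or compactness. Second, your concern about $q$ being a genuine $\psi$-type is legitimate but is shared by the paper's own proof: the paper also minimizes over arbitrary finite $q$ over $B$ (its clause (a) does not restrict to $\psi$-formulas), and the $q$ it produces need not consist of $\psi$-instances. This is a harmless imprecision in the statement; all that part (ii) needs is that $p\cup q$ be finite and decide a complete $\psi$-type, which both proofs deliver.
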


\begin{proof} 

\begin{enumerate}
\item
Let $\Delta $ be finite such that $R(\bar x=\bar
x,\{\psi\},\Delta,2)<\omega$ (where $R = R^m_B$). Define $q(\bar x)$ 
such that
\begin{itemize}
\item[(a)] $q$ is finite and is over $B$,
\item[(b)] $p(\bar x) \cup q(\bar x)$ is
consistent, and
\item[(c)] $R(p\cup q,\{\psi \},\Delta,2)$ is
minimal with respect to (a) and (b).
\end{itemize}

This is possible since $B$ is stable. Moreover, by Lemma \ref{lem:rankeven}, $R(p\cup q,\{\psi\},\Delta ,2)$ is even. 

In particular, by the definition of the rank, we have: 

\begin{itemize}
\item[(d)]
For no $\bar b\subseteq B$ do we have $R(p\cup q\cup \{\pm\psi(\bar x,\bar b)\},\{\psi \},\Delta
,2)\geq R(p\cup q,\{\psi \},\Delta ,2)$
\end{itemize}


We claim that $p \cup q$ implies a complete $\psi$-type over $B$. Assume the contrary; so there exists $\b \in B$ such that 
$p \cup q \cup \{\pm \psi(\x,\b)\}$ are both consistent. By (d) above, one of the ranks $R(p\cup q\cup \{\pm\psi(\bar x,\bar b)\},\{\psi \},\Delta
,2)$ has to be (strictly) smaller than $R(p\cup q,\{\psi \},\Delta ,2)$. Without loss of generality, $R(p\cup q\cup \{\neg\psi(\bar x,\bar b)\},\{\psi \},\Delta
,2)<  R(p\cup q,\{\psi \},\Delta ,2)$. However, $q \cup \{\neg\psi(\bar x,\bar b)\}$ satisfies clauses (a), (b) above; so we get a contradiction to minimality (clause (c)).


\item 
Let $\{\ps _i(\bar x,\bar y_i):i<\omega\}$ list all
formulas of $L(T)$. Let $\Delta _i$ be finite such that $R(\bar x=\bar
x,\{\psi _i\},\Delta _i,2)<\omega$ (where $R = R^m_B$, as before). 

Define $q_i(\bar x)$ by induction
on $i < \omega$ such that
\begin{itemize}
\item[(a)] $q_i$ is finite and is over $B$,
\item[(b)] $p(\bar x)\cup \bigcup _{j\leq i}q_j(\bar x)$ is
consistent, and
\item[(c)] $p\cup\bigcup _{j\leq i}q_j$ implies a complete $\psi_i$-type over $B$.
%
\end{itemize}

This is possible by clause (i) (note that at every stage, the type $p\cup\bigcup _{j\leq i}q_j$ is finite). 


Let $q = \bigcup _{j\leq \aleph_0}q_j$. Clearly $|q| \le \aleph_0$. By clause (c) of the construction, $q$ implies a complete type over $B$. Let $r \in S(B)$ be this type. We claim that $r$ is as required. First, $r$ is  locally isolated by clause (c) above (and $\aleph_1$-isolated by $q$) . By Lemma \ref{lem:isolated_star}, $r \in S_*(B)$, as required.
\end{enumerate}

\end{proof}

\smallskip

Let us define the main property of theories that we are going to investigate.

\begin{de}\label{dfn:fullystable}
We say that $T$ is \emph{fully stable} over $P$ if every complete set $A \subseteq \cC$ is stable over $P$. 
\end{de}

It may be of interest to mention some weaker (more local) notions. We will prove slightly more precise results using these definitions. 

\begin{de}\label{dfn:fullystable2}
\begin{enumerate}
\item 
Given a complete set $A$, we say that $A$ is fully stable over $P$ if every $B \subseteq A$ with $P^A = P^B$ is stable over $P$. 
\item Given $N \models T^P$, we say that $T$ is fully stable over $N$ if every complete set $A$ with $P^A = N$ is stable over $P$. 
\end{enumerate}

\end{de}

\begin{lem}\label{lem:} ($T$ countable)
 	Assume that $A$ is a complete set, and $T$ is fully stable over $P^A$. Then there exists $B \supseteq A$ such that 
	\begin{enumerate}
	\item
		$|B|\le |A|+\aleph_0$
	\item
		Every finite type over $A$ is realized in $B$
		
	\item
		$B$ is locally constructible over $A$
	\end{enumerate}

\end{lem}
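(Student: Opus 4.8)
The plan is to build $B$ as the union of a countable chain $A = B_0 \subseteq B_1 \subseteq \cdots$, where at each stage we realize one more finite type over $A$ by a tail that is locally constructible over the previous stage; a standard bookkeeping argument then ensures every finite type over $A$ gets realized. The key point is that full stability over $P^A$ is exactly what makes each individual realization step go through while keeping everything locally constructible (hence, by Lemma~\ref{lem:isolated_star}, automatically weakly orthogonal to $P$, so no new elements of $P$ are added and completeness is preserved along the way).

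\textbf{The single step.} Suppose $C \supseteq A$ is complete with $P^C = P^A$ and $|C| \le |A| + \aleph_0$, and let $p_0(\bar x)$ be a finite type over $A$ (equivalently, over $C$, since we only need to realize types over $A$). Since $T$ is fully stable over $P^A$ and $P^C = P^A$, the set $C$ is stable over $P$. Apply Lemma~\ref{lem:li}(2) with $B := C$ and $p := p_0$: we obtain $q(\bar x)$ with $|q| \le \aleph_0$ and $r \in S_*(C)$ locally isolated with $p_0 \cup q \equiv r$. Realize $r$ by some finite tuple $\bar d$ (if $p_0$ has finitely many variables) — more precisely, enumerate the variables of $r$ and realize them one at a time; since $r$ is locally isolated, $\tp(\bar d/C) = r \in S_*(C)$, so $P^{C\bar d} = P^C = P^A$, and by Corollary~\ref{co:stardefincluded} (or directly from $r \in S_*(C)$ together with $C$ complete), $C \cup \bar d$ is again complete. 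Moreover $\bar d$ is a local construction over $C$ of length $\le \len(\bar x)$ (each successive element has locally isolated type over $C$ together with the previous ones — this follows from local isolation of $r$ and a standard restriction argument). So $C' := C \cup \bar d$ is complete, contains $C$, has $P^{C'} = P^A$, has cardinality $\le |A| + \aleph_0$, is locally constructible over $C$, and realizes $p_0$.

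\textbf{Assembling the chain.} Fix an enumeration $\langle p_0^\alpha : \alpha < |A| + \aleph_0 \rangle$ of all finite types over $A$ (there are at most $|A| + \aleph_0$ of them since $T$ is countable). Build $\langle B_\alpha : \alpha \le |A| + \aleph_0 \rangle$ increasing and continuous with $B_0 = A$, $B_{\alpha+1}$ obtained from $B_\alpha$ by the single step above so as to realize $p_0^\alpha$, and unions at limits. At each successor stage, $B_{\alpha+1}$ is complete (by the single step) and the extension is a local construction; at limit stages $B_\alpha = \bigcup_{\beta < \alpha} B_\beta$ is complete because completeness is preserved under increasing unions (it is a condition on finite tuples, Definition~\ref{dfn:complete}), and a concatenation of local constructions is a local construction. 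Set $B := B_{|A|+\aleph_0}$. Then $|B| \le |A| + \aleph_0$, every finite type $p_0^\alpha$ over $A$ is realized in $B_{\alpha+1} \subseteq B$, and concatenating the constructions at all stages exhibits $B$ as locally constructible over $A$. This gives (i)--(iii).

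\textbf{Main obstacle.} The delicate point is maintaining stability of the intermediate sets $B_\alpha$ so that Lemma~\ref{lem:li}(2) can be reapplied at each stage: a priori full stability over $P^A$ is a hypothesis about \emph{all} complete sets with $P$-part $P^A$, so as long as we verify $P^{B_\alpha} = P^A$ at every stage (which is where $r \in S_*$ at each step, via Lemma~\ref{lem:isolated_star}, does the work) and $B_\alpha$ is complete, stability of $B_\alpha$ is handed to us for free. So the real content is the bookkeeping that preserves the two invariants ``$B_\alpha$ complete'' and ``$P^{B_\alpha} = P^A$'' through the limit stages — both of which follow from the finitary nature of completeness and from weak orthogonality to $P$ of locally isolated types — plus the (routine but worth stating) observation that a tuple realizing a locally isolated type is a local construction, and that local constructions concatenate.
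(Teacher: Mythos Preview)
Your proof is correct and follows essentially the same approach as the paper's: build an increasing continuous chain starting at $A$, at each successor stage apply Lemma~\ref{lem:li}(ii) to realize the next finite type over $A$ by a tuple whose type is locally isolated (hence in $S_*$), and take the union. The paper enumerates single formulas over $A$ rather than finite types, but this is equivalent; you are simply more explicit about the limit stages and about why the invariants ``complete'' and ``$P$-part $= P^A$'' persist, which the paper leaves implicit.
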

\begin{proof}
 	Let $\set{\ph_i(\x,\a_i)\colon i<|A|}$ list all formulae over $A$. Construct an increasing continuous sequence of sets $A_i$ such that:
	\begin{enumerate}	
	\item 
		$A_0 = A$
	\item
		$A_{i+1} = A_i \cup \set{\b_i}$
	\item
		$\models \ph_i(\b_i, \a_i)$
	\item
		$\tp(\b_i/A_i) \in S_*(A_i)$ and is locally isolated
	\end{enumerate}
	This can be done by induction using Lemma \ref{lem:li} for the successor stages, since all $A_i$ are stable. Note that clause (iv) for $b_i, A_i$ implies by induction that all $A_j$ are complete, and $P^{A_j} = P^A$.  
\end{proof}

\begin{re}\label{re:stab_unions}
 Note that since, in the proof above, every $\b_i$ is finite, stability of the set $A_{i+1}$ follows from that of $A_i$. Hence the assumption of stability over a certain $N \models T^P$ can be replaced by the assumption that e.g. stability over $P$ is closed under taking increasing unions. 
\end{re}

We are now ready to prove the main result of this paper. 

\begin{theorem}\label{th:primary}
Assume that $T$ is countable.
\begin{enumerate}
\item 
	Assume that $T$ is fully stable over $N \models T^P$. Then $T$ has the full existence property over $N$ (Definition \ref{de:fullexist}(ii)). 
	
	More specificaly: Let $A$ be a complete set with $P^A = N$. Then there exists $M \models T$ containing $A$ with $P^A = P^M$ and  $|M|\le |A|+\aleph_0$. 
\item 
	Moreover, $M$ in the previous clause is locally constructible (hence $\aleph_1$-constructible) over $A$. 
\end{enumerate}
%
%
%
\end{theorem}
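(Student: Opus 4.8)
The plan is to build $M$ by iterating the previous Lemma (the one producing $B \supseteq A$ that is locally constructible over $A$, realizes all finite types over $A$, and has $P^B = P^A$) $\omega$ many times, and then check that the union is a model of $T$ whose $P$-part has not grown. First I would set $M_0 = A$ and, given $M_n$ (complete, with $P^{M_n} = N$, locally constructible over $A$), apply Lemma \ref{lem:} to $M_n$ — note $T$ is fully stable over $N = P^{M_n}$, so the hypothesis is met — to obtain $M_{n+1} \supseteq M_n$ with $|M_{n+1}| \le |M_n| + \aleph_0$, every finite type over $M_n$ realized in $M_{n+1}$, and $M_{n+1}$ locally constructible over $M_n$; since $P^{M_{n+1}} = P^{M_n} = N$ and $M_{n+1}$ is complete, the induction goes through. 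Then set $M = \bigcup_{n<\omega} M_n$. Concatenating the local constructions of each $M_{n+1}$ over $M_n$ gives a local construction of $M$ over $A$ (each new element $d_i$ keeps the same locally isolated type over the growing base, since local isolation of $\tp(d_i / M_n \cup \{d_j : j<i\})$ is witnessed by finitely many parameters already present), so $M$ is locally constructible over $A$; by Observation \ref{obs:localimpliestplus} each step is $|T|^+ = \aleph_1$-isolated, so $M$ is also $\aleph_1$-constructible over $A$, giving clause (ii). The cardinality bound $|M| \le |A| + \aleph_0$ is immediate from the bound at each step.

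The two things that remain are: $M \models T$, and $P^M = P^A = N$. For the second, $P^{M} = \bigcup_n P^{M_n} = N$ since each $P^{M_n} = N$; this is exactly the point of keeping every intermediate set complete with the right $P$-part. For $M \models T$, I would use the Tarski–Vaught test relative to $\cC$: given a formula $\varphi(x, \bar a)$ with $\bar a \in M$ and $\cC \models \exists x\, \varphi(x,\bar a)$, the parameters $\bar a$ lie in some $M_n$, so $\exists x\, \varphi(x,\bar a)$ is (part of) a finite type over $M_n$, hence realized in $M_{n+1} \subseteq M$; thus $M \prec \cC$ and in particular $M \models T$. One must be slightly careful that "realizes every finite type over $M_n$" is what is needed — a single consistent formula $\varphi(x,\bar a)$ over $M_n$ is a finite type over $M_n$, so Lemma \ref{lem:}(ii) applies directly; and completeness of $M_n$ is what guarantees the type stays a type over $M_n$ in the relevant sense, though for plain elementarity any consistent formula over $M_n$ suffices.

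The main obstacle is the bookkeeping in the inductive step: verifying that Lemma \ref{lem:} can indeed be applied at stage $n$, i.e., that $M_n$ is complete and $T$ is fully stable over $P^{M_n}$. This reduces to checking that completeness and the equality $P^{M_{n+1}} = P^{M_n}$ propagate, which the proof of Lemma \ref{lem:} already establishes (clause (iv) there, via Lemma \ref{lem:li} and Lemma \ref{lem:isolated_star}), and to the observation in Remark \ref{re:stab_unions} that stability over $P$ is preserved along the finite extensions used, so $P^{M_n} = N$ throughout and full stability over $N$ is exactly the standing hypothesis. Everything else — assembling the local construction of $M$ over $A$ from the pieces, the Tarski–Vaught argument, and the cardinal arithmetic — is routine.
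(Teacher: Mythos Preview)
Your proposal is correct and follows essentially the same approach as the paper's own proof: iterate Lemma~\ref{lem:} $\omega$ many times starting from $A$, and take the union. The paper's proof is terser (it simply states that the union is ``clearly a model which is locally constructible over $A$, $P^M = P^A$''), while you spell out the Tarski--Vaught verification, the concatenation of local constructions, and the inductive bookkeeping for completeness and $P$-part; but the underlying argument is identical.
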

\begin{proof}
	Construct by induction on $i<\om$ sets $A_i$ such that:
	\begin{enumerate}	
	\item 
		$A_0 = A$
	\item
		$|A_i| \le |A|+\aleph_0$
	\item
		Every finite type over $A_i$ is realized in $A_{i+1}$
	\item
		$P^{A_{i+1}} = P^{A_i}$
	\item 
		$A_{i+1}$ is locally constructible over $A_i$. 
	\end{enumerate}
	This is possible by the previous Lemma. Now $M = A_\omega = \bigcup_{i<\om}A_i$ is clearly a model which is locally constructible over $A$, $P^M = P^A$, as required.

\end{proof}


\begin{co}
Assume that $T$ is countable and fully stable over $P$. Then $T$ has the full existence property (Definition \ref{de:fullexist}(i)). In particular, it has the Gaifman property (Definition \ref{de:existence}(iv)). 
\end{co}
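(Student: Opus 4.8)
The plan is to reduce the statement directly to Theorem~\ref{th:primary}(1), from which both conclusions follow with almost no extra work. For the full existence property (Definition~\ref{de:fullexist}(1)), I would fix an arbitrary complete set $A \subseteq \cC$ and produce $M \models T$ with $A \subseteq M$ and $P^M = P^A$. Set $N := P^A$, viewed as a substructure of $\cC^P$. Since $A$ is complete, $P^A \prec P^\cC$ by Fact~\ref{obs:complete_characterization} (equivalently Corollary~\ref{co:defcomplete}), so $N \models T^P$. By hypothesis $T$ is fully stable over $P$ (Definition~\ref{dfn:fullystable}), i.e.\ \emph{every} complete set is stable over $P$; in particular every complete set with $P$-part $N$ is stable over $P$, which is exactly the assertion that $T$ is fully stable over $N$ in the sense of Definition~\ref{dfn:fullystable2}(2). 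So Theorem~\ref{th:primary}(1) applies to this $N$ and this $A$, yielding the desired $M$. As $A$ was arbitrary, $T$ has full existence.

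For the Gaifman property (Definition~\ref{de:existence}(4)) I need every $N \models T^P$ to occur as the $P$-part of some model of $T$. Given such $N$, I would embed it elementarily into the monster $\cC^P$ of $T^P$ and regard its universe as a subset $A := N$ of $P^\cC \subseteq \cC$, so that $P^A = N$. Then $A$ is complete: by Definition~\ref{dfn:complete} the only instances to check involve formulas $\psi(\bar x,\bar b)$ with $\bar b \subseteq N \subseteq P$, and since the induced structure on $P^\cC$ is precisely $\cC^P \models T^P$ (Hypothesis~\ref{asm:1}) and $N \prec \cC^P$, a witness in $P$ to $(\exists \bar x \in P)\,\psi(\bar x,\bar b)$ can already be found in $N$. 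Now I would apply the full existence property just established (or Theorem~\ref{th:primary}(1) with this $N$ directly) to obtain $M \models T$ with $N \subseteq M$ and $P^M = P^N = N$, which is what the Gaifman property asks for.

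I do not expect a genuine obstacle here: essentially all of the content lives in Theorem~\ref{th:primary}, and the only additional ingredient is the standard observation that an arbitrary model of $T^P$ can be realized as a complete subset of $\cC$ equal to its own $P$-part, which is immediate from Hypothesis~\ref{asm:1} as sketched above.
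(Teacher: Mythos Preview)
Your proposal is correct and matches the paper's approach: the corollary is stated without proof, being an immediate consequence of Theorem~\ref{th:primary}, and what you have written is precisely the natural unpacking of why it follows. The only minor point worth noting is that your verification that a model $N \models T^P$, viewed as a subset of $\cC$, is complete could also be obtained in one step from Observation~\ref{obs:complete} (taking $A = P^M$ for any $M \prec \cC$ with $P^M = N$), but your direct argument via Hypothesis~\ref{asm:1} is equally valid.
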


Clearly, if the theory $T$ is stable, $P$ any unary predicate in its vocabulary, then $T$ is also stable over $P$. Therefore the following theorem of Lachlan is a consequence of the results above: 

\begin{co}\label{co:lachlan}(Lachlan \cite{Lachlan1972}; see also Remark \ref{re:lachlan} above).
Let $T$ be countable and stable. Then  $T$ has the full existence property. 
\end{co}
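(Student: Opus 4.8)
\textbf{Proof plan for Corollary~\ref{co:lachlan}.} The plan is to derive Lachlan's theorem as a direct specialization of the main theorem, Theorem~\ref{th:primary}, together with the corollary immediately preceding it. The only nontrivial point is checking that a countable stable $T$ (with $P$ an arbitrary unary predicate) does in fact fall under the hypotheses that have been imposed throughout the paper — namely Hypothesis~\ref{asm:1} — and that stability of $T$ yields full stability over $P$ in the sense of Definition~\ref{dfn:fullystable}. Once these two facts are in place, the conclusion is immediate.

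First I would address Hypothesis~\ref{asm:1}. Clause (i), stable embeddedness of $P$, is automatic when $T$ is stable: in a stable theory \emph{every} subset is stably embedded, since every type over any set is definable. Clause (iii), quantifier elimination down to predicates, is obtained by Morleyizing $T$; the point (already made in the discussion after Hypothesis~\ref{asm:1}, and promised to be revisited at the end of this section) is that completeness of a set is preserved between $T$ and its Morleyization $T'$, and so is the existence property in the direction we need — a model of $T'$ containing $A$ with the same $P$-part is, after forgetting the extra predicates, a model of $T$ with the same $P$-part. So it suffices to prove full existence for $T'$, which does satisfy Hypothesis~\ref{asm:1}. (Clause (ii) then comes for free, as noted after the Hypothesis; alternatively, Morleyizing $T^P$ as well takes care of it, and this does no harm here because we are not trying to preserve $T^P$.)

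Next I would verify full stability. If $T$ is stable and countable, then for any set $A$ the full type space $S(A)$ has cardinality at most $|A|^{\aleph_0} = |A|^{|T|}$. Since $S_*(A) \subseteq S(A)$ (it is a subset of the ordinary type space — only the extra closure conditions $P\cap(A\cup\bar c) = P\cap A$ and completeness of $A\cup\bar c$ are imposed), we get $|S_*(A')| \le |A'|^{|T|}$ for \emph{every} $A'$, in particular for every $A' \equiv A$; so by Definition~\ref{dfn:startypes}(ii) every complete set $A$ is stable over $P$, i.e.\ $T$ is fully stable over $P$ in the sense of Definition~\ref{dfn:fullystable}. (Stability is of course preserved by Morleyization, so the same holds for $T'$.)

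Finally, I would invoke the Corollary preceding Corollary~\ref{co:lachlan}: a countable theory fully stable over $P$ has the full existence property. Applying this to $T'$ and pulling back along the reduct map gives full existence for $T$: every complete subset $A$ of the monster model of $T$ embeds into a model of $T$ with $P$-part $P^A$. \textbf{The main obstacle} is purely bookkeeping around the Morleyization — making sure that ``complete'' and the conclusion of the existence property transfer correctly between $T$ and $T'$ — rather than anything model-theoretically deep; the stability-counting step and the appeal to Theorem~\ref{th:primary} are routine.
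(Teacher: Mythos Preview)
Your proposal is correct and follows essentially the same route as the paper: the paper observes just before the Corollary that stability of $T$ implies (full) stability over $P$, and in the note immediately following the Corollary handles the potential failure of Hypothesis~\ref{asm:1}(ii) by passing to the Morleyization $T'$, noting that completeness of a set is the same in $T$ and in $T'$, and pulling the existence property back from $T'$ to $T$. Your counting argument for full stability ($S_*(A)\subseteq S(A)$ and $|S(A)|\le |A|^{|T|}$) makes explicit what the paper leaves implicit, but the overall strategy is identical.
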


A note is in order in regard to the Corollary above. Recall that up until now, we implicitly assumed Hypothesis \ref{asm:1} in all of our proofs. However, this assumption is not explicitly present in Lachlan's paper. If $T$ is stable, then any definable set is (stable and) stably embedded. However, it is not always the case that clause (ii) of Hypothesis \ref{asm:1} holds for $T$ and $P$. So it is not immediately clear that an arbitrary stable theory with an arbitrary distinguished predicate $P$ falls into our framework. However, consider the Morleyzation $T'$ of $T$, and let $\cC'$ be the expansion of the monster $\cC$ of $T$ to the Morleyzation (it is also the monster model of $T'$). Because of the nature of the definition of completeness, Definition \ref{dfn:complete} (which is called ``the TV-property for the pair $(T,P)$'' in \cite{Lachlan1972}), a set $A$ is complete in $\cC$ if and only if it is complete in $\cC'$. Therefore, working in $T'$ (which is also stable, of course), we can deduce the existence property for every complete subset of $\cC'$, and it will imply the same for every complete subset of $\cC$.

\smallskip

At the same time, if one is interested in Gaifman's property for a theory that does not satisfy Hypothesis \ref{asm:1} in full, one needs to be more careful. Indeed, $T^P$ may change if $T$ is Morley-ized; therefore, one can not just work in the Morleyzation, since the notion of a model of $T^P$ depends on whether we work in $T$ or in $T'$. In other words, proving the existence property for all $N \elem \cC^P$ is not necessarily the same as proving it for all $N \models T^P$. This is why Hodges \cite{Hod-cat1}, when working towards a proof of (particular cases of) Gaifman's Conjecture, only considers theories that the ``uniform reduction property'', which is essentially the same as our assumption Hypothesis \ref{asm:1}(ii). Hodges also observes that any theory $T$ relatively categorical over $P$ has this property.

\medskip

We can now connect our results (actually, Lachlan's results) to Hodges' work on relative categoricity of an abelian group over a subgroup. Since any theory of abelian groups (in the language of groups) with an additional predicate for a subgroup is stable, we  obtain the following:

\begin{co}\label{co:groups}
Let $T$ be a theory of abelian groups with a distinguished predicate $P$ picking out a subgroup . Then  $T$ has full existence. In particular, if $T$ also satisfies Hypothesis \ref{asm:1}(ii), then $T$ has the Gaifman property. 
\end{co}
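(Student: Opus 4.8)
The plan is to derive this as a consequence of Corollary \ref{co:lachlan} (equivalently, of Theorem \ref{th:primary} together with Lachlan's theorem). Since that corollary applies to any countable stable theory, the whole argument reduces to two observations: that $T$ is countable, and that $T$ is stable. Countability is immediate, as the vocabulary is $\{+,-,0,P\}$, which is finite; so $|T| = \aleph_0$.

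The substantive point — and the only place where the hypothesis ``abelian group with a subgroup'' is actually used — is the stability of $T$. Here I would invoke the classical fact that the pair of an abelian group and a distinguished subgroup is an \emph{abelian structure}: by a Baur--Monk style elimination, every formula (with parameters) in $Th(G,+,0,P)$ is equivalent to a Boolean combination of cosets of positively definable subgroups, so every definable subset of $G^n$ is a Boolean combination of cosets of definable subgroups. Abelian structures are stable — this is discussed, in precisely this setting, in Hodges' work on relative categoricity (e.g.\ \cite{Hod-cat1}). (One should be slightly careful: although $P$ is a definable, stable, stably embedded subset of the stable group $(G,+,0)$, stability of the \emph{full} structure $(G,+,0,P)$ is exactly what the Baur--Monk argument delivers, and this is the step I expect to count as the main input, even though it is well known.) Granting this, $T$ is stable, and Corollary \ref{co:lachlan} — together with the discussion following it, which shows that full existence for a countable stable theory does not require Hypothesis \ref{asm:1}(ii) (one may pass to the Morleyization, since completeness of sets is preserved) — yields that $T$ has the full existence property.

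It remains to deduce the Gaifman property under the extra assumption that $T$ also satisfies Hypothesis \ref{asm:1}(ii). In that case $T^P$ is not altered by Morleyization, so for a given $N \models T^P$ I would embed $N$ elementarily into $\cC^P$ (enlarging the monster if necessary). Viewed as a subset of $\cC$, this copy of $N$ satisfies $P^N = N$ and $N \prec P^\cC$, and since the induced structure on $P^\cC$ is that of $T^P$, which has quantifier elimination (the note at the end of Section 2.1), every $\psi$-type of a tuple from $N$ over $P^\cC$ is quantifier-free-definable over $N$; by Fact \ref{obs:complete_characterization}, $N$ is a complete set. Full existence then provides $M \models T$ with $N \subseteq M$ and $P^M = P^N = N$, so $N$ occurs as the $P$-part of a model of $T$. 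Finally, I would remark — as Hodges does — that any theory relatively categorical over $P$ automatically satisfies Hypothesis \ref{asm:1}(ii), so in that classical setting the Gaifman property is unconditional.
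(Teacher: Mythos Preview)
Your proposal is correct and follows exactly the paper's approach: the paper derives this corollary in one line from Corollary~\ref{co:lachlan}, simply noting that any theory of an abelian group with a predicate for a subgroup is stable, and then invokes the Morleyization discussion following Corollary~\ref{co:lachlan} to drop Hypothesis~\ref{asm:1}(ii) for full existence (while keeping it for the Gaifman property). You have just spelled out the details --- the Baur--Monk pp-elimination for stability, and the verification that a model of $T^P$ is a complete set --- that the paper leaves implicit.
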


This is, of course, already a consequence of Lachlan's theorem stated above. The following particular case of Gaifman's Conjecture (due to Hodges) follows:

\begin{co}\label{co:hodges}(Hodges \cite{Hod-cat1}).
Let $T$ be a theory of abelian groups with a distinguished predicate $P$ picking out a subgroup, and assume that $T$ is categorical over $P$. Then  $T$ has the Gaifman property. 
\end{co}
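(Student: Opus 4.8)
The plan is to deduce this directly from Corollary \ref{co:groups}, and in fact from its ``in particular'' clause: a (countable) theory of abelian groups with a distinguished predicate for a subgroup has the Gaifman property as soon as it satisfies Hypothesis \ref{asm:1}(ii). So the entire content of the argument is to verify that categoricity over $P$ forces Hypothesis \ref{asm:1}(ii); everything else is already in place.

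First I would recall why $T$ lies (almost entirely) within our framework with no extra assumptions. Any theory of abelian groups with an additional unary predicate naming a subgroup is stable, so every definable set — in particular $P^\cC$ — is stably embedded, which is Hypothesis \ref{asm:1}(i). Clause (iii) can then be arranged by passing to the Morleyzation at no real cost once (i) and (ii) are known (as explained in the discussion following Hypothesis \ref{asm:1}). The genuinely delicate clause is (ii): $0$-definable subsets of $P^\cC$ must already be $0$-definable inside $\cC^P$ relative to $T^P$. This is precisely the clause that cannot be ``Morley-ized away'': without it, replacing $T$ by its Morleyzation changes $T^P$ and its class of models, so the full-existence statement obtained through the Morleyzation (the route used for Corollary \ref{co:lachlan}, hence for the first part of Corollary \ref{co:groups}) would \emph{not} automatically yield a statement about all $N \models T^P$.

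Next I would invoke the observation of Hodges recorded in the remark after Corollary \ref{co:lachlan}: any theory relatively categorical over $P$ has the ``uniform reduction property'', which is exactly (or at least implies) Hypothesis \ref{asm:1}(ii). Since ``categorical over $P$'' in the sense used here — at most one model with any prescribed $P$-part, up to isomorphism over $P$ — is precisely relative categoricity over $P$, our hypothesis on $T$ delivers Hypothesis \ref{asm:1}(ii) for free. Combined with the previous paragraph, $T$ now satisfies all of Hypothesis \ref{asm:1}.

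Finally I would assemble the pieces: $T$ is a countable stable theory of abelian groups with a predicate for a subgroup, satisfying Hypothesis \ref{asm:1}, so the ``in particular'' part of Corollary \ref{co:groups} applies and gives that $T$ has the Gaifman property, i.e. every $N \models T^P$ occurs as the $P$-part of some model of $T$. The main — and really the only — obstacle in the whole argument is the point flagged above: ensuring that the uniform reduction property genuinely holds, so that one is entitled to move between $T$ and its Morleyzation without disturbing $T^P$ and its models. This is exactly the step where relative categoricity is indispensable, via Hodges' observation.
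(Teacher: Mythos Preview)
Your proposal is correct and follows exactly the route the paper intends: the corollary is stated immediately after the remark that Hodges observed relative categoricity over $P$ implies the uniform reduction property (i.e., Hypothesis~\ref{asm:1}(ii)), so that the ``in particular'' clause of Corollary~\ref{co:groups} applies. Your identification of Hypothesis~\ref{asm:1}(ii) as the only nontrivial point, and of categoricity as the ingredient that secures it, matches the paper's reasoning precisely.
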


We would like to emphasize that most of the results of Hodges, Yakovlev, et al (e.g., \cite{Hod-cat1,Hod-cat2,Hod-cat3}) on abelian groups with a  distinguished subgroup use a very different set of techniques and go in a different direction  (their main objective is understanding \emph{relative categoricity} of $T$ over $P$ for a pair of cardinals $(\kappa, \lambda)$, which is a  more general problem than studying consequences of ``full'' categoricity over $P$ as defined by Gaifman - for example, a vector space over an infinite field is categorical for most, but not all, such pairs; we are hoping to investigate this phenomenon in full generality in future works). This difference in notions of categoricity over $P$ is very much related to \emph{nulldimensionality} defined and discussed in the last section of the present paper.

\bigskip

Finally, compare the main result of this section to the following related theorem from a previous paper of the author:

\begin{fact}(\cite{usvyatsov2024stability})
Let $M \models T$ with $P^M$  a $|T|^+$-saturated model of $T^P$, and assume that $M$ is fully stable over $P$. Then $M$ has the full existence property.
\end{fact}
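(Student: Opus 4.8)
The plan is to mimic the proof of Theorem~\ref{th:primary}, but carry out the construction so that at each successor step we not only realize types and keep the $P$-part fixed, but also realize enough types to guarantee $\lam$-saturation of the final model. The key point is that Lemma~\ref{lem:li}(ii) produces, over any stable set $B$, an extension $q$ of a given finite type $p$ with $|q| \le |T| = \aleph_0$ such that $p \cup q$ is equivalent to a locally isolated type $r \in S_*(B)$; in particular, $r$ is $\aleph_1$-isolated, so realizing it is a $\lam$-construction for any $\lam > \aleph_0$. So the building blocks for a $\lam$-construction are already available; what needs to be added is a bookkeeping argument ensuring saturation.

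The concrete steps I would carry out are as follows. First, let $N = P^M$, which by hypothesis is a $|T|^+ = \aleph_1$-saturated model of $T^P$, and let $A$ be any complete subset of $M$ with $P^A = N$ (so that proving the full existence property over $M$ amounts to proving the existence property for all such $A$, where now we want an $\aleph_1$-saturated witness whose $P$-part is $N$). Since $M$ is fully stable over $P$, every such $A$ is stable, and more generally every complete set $A'$ with $P^{A'} = N$ that we encounter along the way is stable. Second, I would build an increasing continuous chain $\langle A_i : i < \aleph_1 \rangle$ with $A_0 = A$, $|A_i| \le |A| + \aleph_1$, each $A_i$ complete with $P^{A_i} = N$, each $A_{i+1}$ locally constructible (hence $\aleph_1$-constructible) over $A_i$, and — this is the new ingredient — arranging a bookkeeping so that for every $i$ and every type $p$ over $A_i$ with $|p| < \aleph_1$ (equivalently every finite type, together with a coherent enumeration of the countably many finite pieces), $p$ is realized at some later stage $A_j$. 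Because $N$ is already $\aleph_1$-saturated, types over $A_i$ concentrating on $P$ are handled inside $N$ itself by Fact~\ref{fct:typeoverP}; the remaining types are realized by the mechanism of Lemma~\ref{lem:}, which uses Lemma~\ref{lem:li} at the successor stages. Third, set $M' = A_{\aleph_1} = \bigcup_{i < \aleph_1} A_i$; then $M'$ is a model (every finite type over $M'$ is realized, using continuity of the chain), $P^{M'} = N = P^M$, $M'$ is $\aleph_1$-saturated by the bookkeeping, and $M'$ is $\aleph_1$-constructible over $A$ as a concatenation of $\aleph_1$-constructions. Since $A$ was arbitrary complete with $P^A = P^M$, this gives full $\aleph_1$-existence over $M$, which is the full existence property over $M = P^M$ in the sense claimed (the statement says ``full existence property'', and the saturation of the witness is the natural strengthening that the method yields; I would phrase the theorem to match whichever version is intended).

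The main obstacle, and the place requiring real care, is the bookkeeping that simultaneously (a) realizes all small types to achieve $\aleph_1$-saturation and (b) keeps each successor step locally constructible and $P$-preserving. The subtlety is that a type $p$ over $A_i$ that we wish to realize need not itself be in $S_*(A_i)$, so we cannot realize it directly by a $\lam$-isolated step; instead we must invoke Lemma~\ref{lem:li}(ii) to replace $p$ by an extension equivalent to a locally isolated $r \in S_*(A_i)$, and then check that realizing $r$ both realizes $p$ and keeps $P^{A_{i+1}} = P^{A_i}$ (the latter holds since $r \in S_*$ means $r$ is weakly orthogonal to $P$, so realizing it adds no new elements of $P$ — this is exactly the content of the definition of $S_*$ and Observation~\ref{8.5}). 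One also has to confirm that a type over $A_i$ of size $< \aleph_1$ is, for bookkeeping purposes, captured by its finite subtypes together with a cofinal enumeration — here the fact that $T$ is countable is used essentially, as is the continuity of the chain — and that the union over a chain of length $\aleph_1$ of sets each locally constructible over its predecessor is itself $\aleph_1$-constructible over $A$, which is the standard transitivity of constructibility along chains. Modulo these points, the argument is a routine adaptation of Theorem~\ref{th:primary} with saturation folded in.
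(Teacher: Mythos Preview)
This statement is quoted from \cite{usvyatsov2024stability} and is not proven in the present paper, so there is no in-paper argument to compare against. Note also that, read literally through Definition~\ref{de:existence}(ii), the claim is vacuous: for any $B \subseteq M$ with $P^B = P^M$, the model $M$ itself already witnesses the existence property for $B$. The intended content (and what is established in the cited reference) is presumably stronger --- either full $|T|^+$-existence, or full existence of $T$ over $P^M$ in the sense of Definition~\ref{de:fullexist}(ii) --- and your instinct to aim for a saturated witness reflects this.

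Taking that as the target, your argument has a genuine gap. You assert that ``every complete set $A'$ with $P^{A'} = N$ that we encounter along the way is stable'' because $M$ is fully stable over $P$. But Definition~\ref{dfn:fullystable2}(i) only guarantees stability for subsets of $M$; it says nothing about complete sets not contained in $M$. Once you pass from $A_0 = A \subseteq M$ to $A_1 = A_0 \cup \{\bar b_0\}$, there is no reason to have $A_1 \subseteq M$, and the hypothesis no longer applies directly. Remark~\ref{re:stab_unions} does show that stability survives adjoining a finite tuple, so successor stages are not the issue; but at the limit stages of your length-$\aleph_1$ chain you take unions, and closure of stability under increasing unions is exactly what Remark~\ref{re:stab_unions} flags as an \emph{additional} assumption, not a theorem. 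This is precisely why Theorem~\ref{th:primary} needs the stronger hypothesis ``$T$ fully stable over $N$'' rather than ``$M$ fully stable''. The $|T|^+$-saturation of $P^M$ --- which your sketch never actually uses --- is presumably the ingredient that closes this gap in the cited proof; without engaging it, the induction cannot be completed. (A secondary point: you take $|T| = \aleph_0$ throughout, whereas the Fact is stated for arbitrary $T$.)
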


\section{Full stability and $ACFA_0$}

In this section we discuss the notion of full stability over $P$, and remark that the theory $T = ACFA_0$ falls into this category. We assume familiarity with the basics of ``stability-like'' theory of $ACFA_0$ as developed in \cite{Chatzidakis1999ModelTO} (see also \cite{Chatzidakis2020RemarksAT}) for a concise summary). 

\medskip

 Assume for simplicity that for some ``big enough'' $\lam$ we have $\lam^{<\lam} = \lam$ (for example, $\lam = \mu^+ = 2^\mu$). So every theory $T$ of cardinality $<\lam$ has a saturated model in $\lam$. 

\begin{lem}\label{lem:satstab}
Let $M_P\models T^P$ be saturated of cardinality $\lam$ as above, and assume that for every complete $B \models T$ with $P^B = M_P$ we have $|S_*(B)| < 2^{|B|}$. Then $T$ is fully stable. 

\end{lem}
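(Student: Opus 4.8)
The goal is to deduce \emph{full} stability of $T$ (stability over $P$ of \emph{every} complete set) from stability over $P$ of complete models of $T$ whose $P$-part is the fixed saturated $M_P$ of size $\lam$. The plan is to reduce, in two steps, the stability of an arbitrary complete set to the hypothesis. First I would reduce to complete \emph{models}: given an arbitrary complete set $A$, by Theorem \ref{th:primary} (full existence for countable fully stable $T$ — but note we are trying to \emph{prove} full stability, so instead I would use the more elementary Lemma \ref{lem:}/Lemma \ref{lem:li}-style construction, or rather just the general fact that a complete set can be extended to a model without changing the $P$-part once stability is available locally) one extends $A$ to $M \models T$ with $P^M = P^A$; since $A \subseteq M$ and completeness/stability behave well under this extension, it suffices to bound $|S_*(M)|$. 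Here one must be slightly careful about circularity: the cleanest route is to observe that stability of $A$ follows from stability of any complete $M \supseteq A$ with the same $P$-part, because $S_*(A)$ injects (via restriction of suitable extensions, or via the rank characterization Theorem \ref{thm:stablerank}) into data controlled by $S_*(M)$; more precisely, by Theorem \ref{thm:stablerank} it is enough to bound the rank $R^n_A(\bar x = \bar x, \Delta_1, \Delta_2, 2)$ for finite $\Delta_1, n$, and this rank only decreases (or is controlled) when passing from $A$ to a superset with the same $P$-part, so stability of $M$ gives stability of $A$.

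\textbf{Second step: reduce to the saturated $P$-part.} Having reduced to complete models $M \models T$, I would now invoke Fact \ref{fct:satstable}: a complete set is stable iff $|S_*(A')| \le |A'|^{|T|}$ for \emph{some} saturated $A' \equiv A$ with $|A'| > |T|$, and moreover it suffices that $|S_*(A')| < 2^{|A'|}$ for such $A'$. So fix a complete $M \models T$; take a saturated model $M' \equiv M$ of cardinality $\lam$ (possible since $|T| < \lam$ and $\lam^{<\lam} = \lam$). Since $M'$ is saturated of size $\lam$ and $M' \models T$, its $P$-part $P^{M'}$ is a model of $T^P$ of size $\lam$; because $P$ is stably embedded and $M'$ is saturated, $P^{M'}$ is itself saturated of size $\lam$ (saturation of the ambient model transfers to the stably embedded part), hence $P^{M'} \cong M_P$. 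Identifying these, $M'$ is a complete model of $T$ with $P^{M'} = M_P$ (after an isomorphism moving $P^{M'}$ onto $M_P$; this is harmless since stability is an isomorphism-invariant, $\equiv$-invariant property). Now the hypothesis of the Lemma applies directly: $|S_*(M')| < 2^{|M'|} = 2^\lam$. By the "moreover" clause of Fact \ref{fct:satstable}, $M$ is stable. Combined with the first step, every complete set is stable, i.e., $T$ is fully stable.

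\textbf{Main obstacle.} The delicate point is the first reduction — showing that stability (over $P$) of a complete set $A$ follows from stability of a complete model $M \supseteq A$ with $P^M = P^A$ — since one wants to avoid invoking the main theorem (which presupposes what we are proving). The honest way around this is to note that $A$ \emph{itself} can be taken saturated: by Fact \ref{fct:satstable} we only need to examine saturated $A' \equiv A$ of size $\lam$, and for such $A'$ the same argument as in step two applies verbatim, because $P^{A'}$ is then saturated of size $\lam$ hence isomorphic to $M_P$, and $A'$ can be extended to a complete \emph{model} with the same $P$-part using that $P^{A'}$ is $\lam$-saturated — but even the existence of such a model must be justified without circularity; Fact \ref{fact:satexist} (full existence over a saturated model of $T^P$) does exactly this. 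So in fact the argument is: take $A' \equiv A$ saturated of size $\lam$; then $P^{A'} \cong M_P$ is saturated, so by Fact \ref{fact:satexist} (existence over saturated $P$-parts) $A'$ extends to $M' \models T$ with $P^{M'} = P^{A'} = M_P$, and $M'$ may be taken saturated of size $\lam$; then $M'$ is a complete model with $P$-part $M_P$, so $|S_*(M')| < 2^\lam$ by hypothesis; since $S_*(A')$ is (essentially) controlled by $S_*(M')$ — every type in $S_*(A')$ extends to one in $S_*(M')$, and the fibers are bounded — we get $|S_*(A')| < 2^\lam$, whence $A$ is stable by Fact \ref{fct:satstable}. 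The only genuinely technical estimate is the "fibers are bounded" claim, which follows from the rank being finite on stable models (Theorem \ref{thm:stablerank}) applied to $M'$.
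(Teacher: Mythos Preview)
Your reading of the hypothesis as ``for every complete $B$ which is a \emph{model of $T$}'' is literal but almost certainly not what the paper intends. Both the paper's own proof of this lemma and its application in Theorem~\ref{thm:ACFA} (``it is enough to prove that, for every complete $A$ with $P^A = P^M$, the number of types in $S_*(A)$ is not big'') make clear that the hypothesis is meant to range over \emph{all} complete sets $B$ with $P^B = M_P$, not just models of $T$; the ``$\models T$'' appears to be a slip. With that reading, the paper's argument is a two-line application of Fact~\ref{fct:satstable}: given a complete $A$, name $A$ by a new predicate $Q$, take a saturated model $N'$ of the expanded theory of size $\lam$, and set $B = Q^{N'}$. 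Then $B$ is a saturated model of $\Th(A)$ sitting inside (a copy of) $\cC$, and---because $A$ is complete---$P^B$ is a saturated model of $T^P$ of size $\lam$, hence isomorphic to $M_P$. The hypothesis now applies \emph{directly to $B$}, giving $|S_*(B)| < 2^{|B|}$, and Fact~\ref{fct:satstable} yields stability of $A$. No passage to a model of $T$ is needed.

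Your detour through models of $T$ is therefore unnecessary, and in any case it has a real gap. The step ``every type in $S_*(A')$ extends to one in $S_*(M')$'' is not justified, and it is not obvious: for $p \in S_*(A')$ realized by $a$, you would need to find $a' \equiv_{A'} a$ with $M'a'$ complete and $P^{M'a'} = P^{M'}$. Producing an auxiliary saturated $N \supseteq A'a$ with the same $P$-part (via Fact~\ref{fact:satexist}) does not help, since Theorem~\ref{thm:satiso} only gives an isomorphism $N \cong M'$ over $P$, not over $A'$, and $|A'| = \lam$ blocks any naive back-and-forth into $M'$. Your appeal to ``the rank being finite on stable models'' does not bridge this: the rank in Theorem~\ref{thm:stablerank} is $R_{M'}$, and you have not explained how finiteness of $R_{M'}$ controls either the extension problem or $|S_*(A')|$. (It is plausible that a nonforking-extension result for $*$-types from \cite{ShUs322a} would do it, but you do not invoke one.) So even under your literal reading of the hypothesis, the argument as written is incomplete; and under the intended reading, the entire reduction is superfluous.
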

\begin{proof}
 Let $A$ be a complete set. Expand the language by a new predicate $Q$ interpreted as $A$, call the expanded theory $T'$. Let $N'$ be a saturated model of $T'$ of cardinality $\lam$ and let $N\models T$ be its restriction to $L$. Let $B = Q^{N'}$. 
 
Note:
 
 \begin{itemize}
\item $B$ is a saturated model of $\Th(A)$.
\item Hence $P^B$ is a saturated model of $T^P$ (and so is isomorphic to $M_P$).
\end{itemize}

The second item is true because $A$ is complete. 

By the assumption of the Lemma,  $|S_*(B)| < 2^{|B|}$. By Fact \ref{fct:satstable}, $B$ is stable (hence so is $A$).

 
\end{proof}

%
%

From now on, let $T$ be a completion of $ACFA_0$, $\cC$ the monster model of $T$, $P = Fix(\sigma)$. Recall that $P$ is stably embedded (\cite[1.11]{Chatzidakis1999ModelTO}), $T$ eliminates imaginaries (\cite[1.10]{Chatzidakis1999ModelTO}). In the the most natural language (of fields with a function symbol for the automorphism), $T$ is model complete, but does not eliminate quantifiers (\cite{Macintyre1997GenericAO}). In order to ``fit'' $T$ into our context, we need to replace all function symbols by predicate symbols (and add appropriate axioms to the theory), and add predicate symbols to ensure quantifier elimination (the latter is not absolutely necessary, since Hypothesis \ref{asm:1}(ii) holds regardless of quantifier elimination - see Proposition 1.11 in \cite{Chatzidakis1999ModelTO}). For example, we can expand the language with the predicates used in Macintyre \cite{Macintyre1997GenericAO}, Theorem 12. 


Since $T = T^{eq}$, we can use the following characterization obtained Theorem \ref{thm:eq}:

\begin{obs}\label{obs:ACFeq}
 Let $A \subseteq \cC$. Then $A$ is complete if and only if  $dcl(A) \cap P^\cC = P^A$ and $P^A \prec P^\cC$ if and only if  $acl(A) \cap P^\cC = P^A$ and $P^A \prec P^\cC$. 
 \end{obs}
\noindent In particular:

\begin{re}
 Let $A$ be a difference field of characteristic 0 with $P^A$ pseudo-finite. Then $A$ is (can be seen as) a complete subset of the monster model of $T$ as above. 

\end{re}

\begin{re}\label{re:acl}
\begin{enumerate}
\item A set $A$ is complete if and only if $P^A = P^{\acl(A)}$ and $\acl(A)$ is complete. In particular, if $A$ is complete, then for every $b \in \acl(A)$, $\tp(b/A) \in S_*(A)$. Similarly, $\tp(a/A) \in S_*(A)$ if and only if $\tp(a/\acl(A)) \in S_*(\acl(A))$.  
\item Let $A$ be complete, $a,b$ tuples. Then $\tp(ab/A) \in S_*(A)$ if and only if 
\begin{itemize}
\item $\tp(a/A) \in S_*(A)$ (hence $Aa$ is complete), and
\item $\tp(b/Aa) \in S_*(Aa)$
\end{itemize}

\end{enumerate}

\end{re}

\begin{ex}
	Note that if $A \subseteq \cC$ and $a_0,a_1  \in A$ such that $\sigma(a_i) = \lam\cdot a_i$ for some $0 \neq \lam\in A$, then $b = \frac{a_0}{a_1} \in P$. Hence if $b \notin P^A$, then $A$ is incomplete (even if $P^A \prec P^\cC$).
	
	In this case, it may be the case that $p = \tp(a_1/A) = \tp(a_0/A) \in S_*(A)$, but $\tp(a_1/Aa_0) \notin S_*(Aa_0)$, and $\tp(a_1a_0/A) \notin S_*(A)$.
\end{ex}


\begin{obs}
 Let $A$ be a complete set, $p \in S_*(A)$. Then $p$ is weakly orthogonal to the fixed field in the sense of $ACFA_0$; i.e., for every model $M\models T$ containing $A$ and every $a\models p$ in $M$, we have $a \ind_A P^M$.  
\end{obs}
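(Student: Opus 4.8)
The plan is to prove the (formally stronger) statement $a \ind_A P^\cC$, the forking being computed in $\cC$; then $a \ind_A P^M$ follows by monotonicity, since $P^M \subseteq P^\cC$. By finite character of forking it is enough to show $a \ind_A \bar c$ for every finite tuple $\bar c \subseteq P^\cC$, and I would use throughout that $ACFA_0$ is simple (supersimple, \cite{Chatzidakis1999ModelTO}), so that forking coincides with dividing. The engine of the argument is a \emph{definability fact}, which is where the hypothesis $p = \tp(a/A) \in S_*(A)$ will be used: for every formula $\phi(x,\bar e,\bar y)$ with $\bar e \subseteq A$ and $\bar y$ a tuple of variables of sort $P$, the set $X_\phi := \{\bar c' \subseteq P^\cC : \models \phi(a,\bar e,\bar c')\}$ is definable over parameters in $P^A$.

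To establish this, I would first note that, since $A$ is complete and $\tp(a/A) \in S_*(A)$, the set $Aa$ is complete with $P^{Aa} = P^A$ (Definition \ref{dfn:startypes}), whence $\acl(Aa) \cap P^\cC = P^A$ by Observation \ref{obs:ACFeq} applied to $Aa$ (recall $T = T^{eq}$). Now $X_\phi$ is a definable subset of a power of $P^\cC$, so by stable embeddedness of $P$ (Hypothesis \ref{asm:1}(i)) it is definable over $P^\cC$; since $T^P$, the theory of the pseudo-finite fixed field, eliminates imaginaries, $X_\phi$ has a code $c_{X_\phi} \in P^\cC$ by Lemma \ref{lem:eq}. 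As $X_\phi$ is defined by $\phi(a,\bar e,\,\cdot\,)$, every automorphism of $\cC$ fixing $a\bar e$ fixes $X_\phi$, hence fixes $c_{X_\phi}$; therefore $c_{X_\phi} \in \dcl(a\bar e) \cap P^\cC \subseteq \acl(Aa) \cap P^\cC = P^A$. Thus $X_\phi$ is defined by some $L(P^A)$-formula $d_\phi(\bar y)$, i.e.\ $\models \phi(a,\bar e,\bar c') \leftrightarrow d_\phi(\bar c')$ for all $\bar c' \subseteq P^\cC$ of the right length.

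With the definability fact in hand, I would fix a finite $\bar c \subseteq P^\cC$ and suppose, toward a contradiction, that $\tp(a/A\bar c)$ forks — equivalently, by simplicity, divides — over $A$. Then it contains a formula $\phi(x,\bar e,\bar c)$ with $\bar e \subseteq A$ that divides over $A$, witnessed by an $A$-indiscernible sequence $\langle \bar c_i : i<\omega\rangle$ with $\bar c_0 = \bar c$ and $\{\phi(x,\bar e,\bar c_i) : i<\omega\}$ $k$-inconsistent. Since $P$ is $0$-definable and $\bar c_0 \subseteq P$, all $\bar c_i \subseteq P^\cC$. Applying the definability fact to this $\phi$ and $\bar e$: from $\models \phi(a,\bar e,\bar c_0)$ we get $\models d_\phi(\bar c_0)$; as the $\bar c_i$ are $A$-indiscernible and $d_\phi$ is over $A$, this gives $\models d_\phi(\bar c_i)$ for all $i$, hence $\models \phi(a,\bar e,\bar c_i)$ for all $i$. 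So $a$ realizes $\{\phi(x,\bar e,\bar c_i) : i<\omega\}$, contradicting its $k$-inconsistency. Hence $\tp(a/A\bar c)$ does not fork over $A$, i.e.\ $a \ind_A \bar c$; letting $\bar c$ vary over finite subtuples of $P^\cC$ yields $a \ind_A P^\cC$.

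The hard part should be the definability fact, and within it the step forcing the canonical parameter $c_{X_\phi}$ into $P^A$: this is the only place where $p \in S_*(A)$ is used (via the $\acl$-description of completeness), and it genuinely depends on $T^P$ eliminating imaginaries — so that a definable subset of $P$ admits a code inside $P$ (Lemma \ref{lem:eq}) — which for $ACFA_0$ holds because the fixed field is pseudo-finite. A shorter but less self-contained route would be to quote the criterion from \cite{Chatzidakis1999ModelTO} that, over an algebraically closed set, a type is weakly orthogonal to the fixed field exactly when its algebraic closure contains no new fixed points, and combine it with the $\acl$-description of $S_*$-types (Corollary \ref{co:eq}); I would nevertheless present the argument above, since it stays within the machinery already developed in the paper.
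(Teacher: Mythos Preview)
Your proof is correct and follows the same route as the paper: completeness of $Aa$ yields definability (over $P^A$) of the relevant $\phi$-types of $a$ over $P$, and definability implies non-forking in a simple theory. The paper compresses this into a single line (``$\tp(a/P^M)$ is definable over $P^A$, hence it does not fork over $P^A$''); your version is more careful in explicitly handling formulas with $A$-parameters and spelling out the non-dividing argument via indiscernible sequences, which makes the desired conclusion $a \ind_A P^M$ (as opposed to merely $a \ind_{P^A} P^M$) transparent.
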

\begin{proof}
 If $p\in S_*(A)$, then, by the definition, for any such $M$ and $a\models p$, the type $\tp(a/P^M)$ is definable over $P^A$, hence it does not fork over $P^A$. 
%
\end{proof}

Note that the converse is also true. Indeed, if $\tp(a/A)$ as above is weakly orthogonal to $P$, then $a \ind_A P^M$, hence $\acl(Aa) \ind_A P^M$, which, in this case, means that $\acl(Aa)$ and $P^M$ are linearly free over $\acl(P^A) = P^A$ (see \cite[2.4]{Chatzidakis2020RemarksAT}). In particular $\acl(Aa)\cap P^M \subseteq A$, which implies $\tp(a/A) \in S_*(A)$ by Observation \ref{obs:ACFeq}.

%
%

\begin{theorem}\label{thm:ACFA}
 $T = ACFA_0$ is fully stable over $P = Fix(\sigma)$.
\end{theorem}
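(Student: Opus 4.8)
The plan is to apply Lemma~\ref{lem:satstab} (the saturated-model criterion for full stability). By that lemma, it suffices to fix a saturated model $M_P \models T^P$ of some suitable cardinality $\lam$ with $\lam^{<\lam} = \lam$ and to show that for every complete $B \models T$ with $P^B = M_P$ we have $|S_*(B)| < 2^{|B|}$. Since $T = ACFA_0$ eliminates imaginaries and $T = T^{eq}$, I would use Observation~\ref{obs:ACFeq}: a set is complete exactly when it is algebraically closed ``relative to $P$'' (i.e.\ $\acl(B)\cap P^\cC = P^B$) and $P^B \prec P^\cC$. In particular $P^B$ is a saturated model of $T^P$, which for $ACFA_0$ means a saturated pseudo-finite field (the fixed field of a model of $ACFA$ is pseudo-finite, and saturation is inherited because $B$ is complete). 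So the whole problem reduces to a cardinality count of types in $S_*(B)$ over such a $B$.

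The key step is to parametrize elements of $S_*(B)$ by data that lives ``close to $B$''. By the observations recorded just before Theorem~\ref{thm:ACFA}, a type $\tp(a/B) \in S_*(B)$ is exactly a type that is weakly orthogonal to the fixed field in the sense of $ACFA_0$, equivalently $a \ind_B P^M$ for any (every) model $M \supseteq B$ containing a realization; equivalently $\acl(Ba)$ and $P^M$ are linearly disjoint over $P^B$. Using the ``stability-like'' theory of $ACFA_0$ from \cite{Chatzidakis1999ModelTO}: $ACFA_0$ is supersimple, so the $SU$-rank is ordinal-valued and forking is well-behaved; moreover the theory is ``one-based over the fixed field'' in the precise sense isolated by Chatzidakis--Hrushovski, namely the only obstruction to stable (rank-one-based) behaviour is the fixed field itself. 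Concretely, any type is ``internal'' or ``analysable'' over $P$ together with a part that behaves stably, and a type weakly orthogonal to $P$ has no $P$-internal component, hence should be controlled entirely by the stable, one-based part of the structure. The plan is therefore: decompose a type in $S_*(B)$ via its $SU$-rank and the $P$-analysis; the components orthogonal to $P$ are one-based / modular, so over a model they are determined by finitely many ``coordinates'' in $\acl(B)$ (canonical bases living in $B^{eq} = B$ because $T = T^{eq}$ and $B$ is algebraically closed relative to $P$), and there are at most $|B|^{\aleph_0} = |B|$ of those. Thus $|S_*(B)| \le |B| < 2^{|B|}$.

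An alternative, perhaps cleaner, route to the same count: work directly with the rank $R_A$ from Definition~\ref{R} and Theorem~\ref{thm:stablerank}. It would be enough to show, for this $B$, that for every finite $\Delta_1$ and $n$ there are finite $\Delta_2$, $m$ with $R^n_B(\x=\x,\Delta_1,\Delta_2,2)\le m$ --- i.e.\ $B$ is stable in the sense of Definition~\ref{6.5}, not merely $|S_*(B)|<2^{|B|}$. Here the finiteness of the rank is exactly the assertion that one cannot build an infinite binary tree of mutually contradictory $\Delta_1$-conditions each extendible to a $*$-type over $B$, and the supersimplicity of $ACFA_0$ together with the fact that the fixed field has been ``quotiented out'' in the definition of $R_A$ (clause (iv) precisely replaces $P$-formulas by their definitions over $P^B$) should bound this tree by the finite $SU$-rank of the relevant $\Delta_1$-formulas in $ACFA_0$. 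I would then invoke Fact~\ref{fct:satstable} or Lemma~\ref{lem:satstab} to pass from stability of this one $B$ to full stability.

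The main obstacle is the decomposition/parametrization step: making precise that a type weakly orthogonal to $P = Fix(\sigma)$ is genuinely controlled by the stable (modular) part of $ACFA_0$, so that it has a canonical base inside $\acl(B) = B$ and the count $|B|^{\aleph_0}$ goes through. This requires quoting and correctly applying the structure theory of $ACFA_0$ --- the $SU$-rank machinery, the analysis of types over the fixed field, modularity of non-$P$-internal minimal types, and elimination of imaginaries --- rather than any new argument; the danger is a subtlety in how ``weakly orthogonal to $P$'' interacts with $P$-analysability when $P^B$ is only saturated rather than e.g.\ $\aleph_0$-saturated over the empty set, but completeness of $B$ (Observation~\ref{obs:ACFeq}) is exactly what rules out the bad cases. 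Once that structural input is in place, the cardinality bound and the appeal to Lemma~\ref{lem:satstab} are routine.
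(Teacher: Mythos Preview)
Your overall strategy (reduce to a saturated $P$-part via Lemma~\ref{lem:satstab}, then bound $|S_*(B)|$) is exactly right, and your identification of $S_*(B)$ with types weakly orthogonal to $P$ is correct. The gap is in the decomposition step. You claim that a type in $S_*(B)$, being weakly orthogonal to $P$, ``has no $P$-internal component'' and is therefore governed entirely by the modular (stable, one-based) part of the semi-minimal analysis. This inference fails: a type $\tp(a/B)\in S_*(B)$ can be qf-internal to $P$ and still weakly orthogonal to $P$ over $B$. The paper even gives an explicit example: if $\sigma(a_0)=\lambda a_0$ with $\lambda\in B$, then $\tp(a_0/B)$ is qf-internal to $P$ (any two solutions have ratio in $P$), yet it lies in $S_*(B)$ whenever $\acl(Ba_0)\cap P = P^B$. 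More generally, the semi-minimal analysis (Fact~\ref{fact:decomposition}) produces a chain $a_1,\ldots,a_n$ with each $\tp(a_{i+1}/\acl(Ba_{\le i}))$ either stable modular \emph{or} qf-internal to $P$, and nothing about $\tp(a/B)\in S_*(B)$ forces the second alternative not to occur; what it forces is only that each such piece is again in $S_*$ of its base.

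So your count handles only the stable pieces, and the genuinely hard half of the argument is missing: bounding the number of types in $S_*(B)$ that are qf-internal to $P$. The paper does this via Lemma~\ref{lem:rankonestable}, which in turn rests on Chatzidakis' Lemma~3.4 in \cite{Chatzidakis2020RemarksAT}: over an $\aleph_1$-saturated $P$-part, any such type is $\aleph_\eps$-isolated (determined by its restriction to the algebraic closure of a finite set), whence there are at most $|B|+2^{\aleph_0}$ of them. Without this ingredient, neither your first route nor your alternative (bounding $R_B$ directly from supersimplicity) goes through, since the $R_B$-rank bound must likewise account for the qf-internal types in $S_*(B)$.
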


The main tool in our proof is the Chatzidakis-Hrushovski ``semi-minimal'' analysis of types of finite rank \cite{Chatzidakis1999ModelTO}. We refer the reader to \cite{Chatzidakis2020RemarksAT} for a review of the relevant notions, specifically, $SU$-rank and qf-internality (quantifier free internality) to $P$ (Definition 2.12 in \cite{Chatzidakis2020RemarksAT}). 

\begin{fact}\label{fact:decomposition} (\cite{Chatzidakis1999ModelTO}, Theorem 5.5) Suppose that $SU(a/E) < \omega$, where $E$ is a difference field. Then there is a sequence $a_1, \ldots,a_n$ of elements in $\acl(Ea)$ such that $a \in \acl(Ea_1 \ldots  a_n)$ and, for every $i < n$, $\tp(a_{i+1}/\acl(Ea_1\ldots a_i))$ is either stable modular of $SU$-rank 1, or qf-internal to the fixed field $P$.
\end{fact}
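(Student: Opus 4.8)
The plan is to combine two ingredients of very different character: a soft coordinatization of finite-$SU$-rank types into rank-$1$ steps, valid in any supersimple theory, together with the hard dichotomy theorem for $SU$-rank-$1$ types in $ACFA_0$. Recall that $ACFA_0$ is supersimple, so every type of finite $SU$-rank admits a semi-minimal analysis; the real content of the statement is that the minimal (rank-$1$) constituents of the analysis are of exactly two kinds, stable modular or qf-internal to $P = Fix(\sigma)$.

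First I would produce the sequence $a_1, \ldots, a_n$ by descending induction on $SU(a/E)$. If $SU(a/E) = 0$ then $a \in \acl(E)$ and there is nothing to prove. If $SU(a/E) \ge 1$, the standard coordinatization lemma for supersimple theories yields some $b \in \acl(Ea)$ with $SU(b/E) = 1$. Since $b \in \acl(Ea)$, the tuples $ab$ and $a$ are interalgebraic over $E$, so the Lascar inequalities for $SU$-rank give $SU(a/\acl(Eb)) + SU(b/E) \le SU(ab/E) = SU(a/E)$, whence $SU(a/\acl(Eb)) \le SU(a/E) - 1 < SU(a/E)$. Setting $a_1 = b$ and applying the induction hypothesis to $\tp(a/\acl(Ea_1))$, I obtain $a_2, \ldots, a_n \in \acl(Ea)$ with $a \in \acl(Ea_1 \ldots a_n)$ and each $\tp(a_{i+1}/\acl(Ea_1\ldots a_i))$ of $SU$-rank $1$. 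This step uses nothing beyond supersimplicity and is routine.

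The crux is to classify each rank-$1$ type $\tp(a_{i+1}/E_i)$, where $E_i := \acl(Ea_1 \ldots a_i)$ is again an algebraically closed difference field. Here I would invoke the Zilber-style dichotomy for $ACFA_0$: a type of $SU$-rank $1$ over an algebraically closed difference field is either one-based, in which case it is stable and modular of $SU$-rank $1$, or it is \emph{not} one-based, in which case it is non-orthogonal to the fixed field. The proof of this dichotomy is the deep part of \cite{Chatzidakis1999ModelTO} and is where I expect the main obstacle to lie: a non-one-based rank-$1$ type yields, via the group configuration theorem, a definable group acting on its locus, and the geometric analysis (jet spaces together with the canonical base property) forces this group to be non-modular, hence field-like; the characteristic-$0$ hypothesis is essential precisely because it excludes the Frobenius twists that complicate the positive-characteristic classification, so that up to the relevant non-orthogonality the only field that can occur is $Fix(\sigma)$.

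Finally I would upgrade non-orthogonality to qf-internality for the second alternative. For a rank-$1$ type, non-orthogonality to the rank-$1$ set $Fix(\sigma)$ already yields $Fix(\sigma)$-internality after a harmless base extension, which is absorbed into $\acl(E_i)$, by the usual internality-from-non-orthogonality argument available in supersimple theories. Since $P = Fix(\sigma)$ is stably embedded (\cite[1.11]{Chatzidakis1999ModelTO}) and, in the language we have fixed for $T$, quantifiers are eliminated, the witnessing internality maps can be taken quantifier-free, so the type is qf-internal to $P$; and in the first alternative one-basedness of a rank-$1$ type is exactly stable modularity of $SU$-rank $1$. Concatenating the classifications of the $n$ successive steps then produces the sequence $a_1, \ldots, a_n$ with the asserted property.
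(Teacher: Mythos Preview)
The paper does not give a proof of this statement at all: it is recorded as a \emph{Fact}, with a bare citation to \cite[Theorem 5.5]{Chatzidakis1999ModelTO}, and is then used as a black box in the proof of Theorem~\ref{thm:ACFA}. So there is nothing in the present paper to compare your proposal against.

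That said, your outline is a faithful high-level summary of how the Chatzidakis--Hrushovski argument is structured: a soft coordinatization into $SU$-rank-$1$ steps (valid in any supersimple theory), followed by the dichotomy for rank-$1$ types, followed by the upgrade from non-orthogonality to (qf-)internality. Two small cautions. First, the step you call ``the crux'' and dispatch in a paragraph is the entire substantive content of the cited theorem; the group-configuration and jet-space machinery you allude to occupies a large portion of \cite{Chatzidakis1999ModelTO} and cannot be reconstructed from general principles. Second, the passage from non-orthogonality to qf-internality is not quite as automatic as you suggest: non-orthogonality of a rank-$1$ type to $Fix(\sigma)$ gives almost-internality, and the further reduction to qf-internality uses $ACFA$-specific facts (cf.\ \cite[Lemma~2.14]{Chatzidakis2020RemarksAT}), not merely stable embeddedness plus QE. None of this is a gap in your plan, but if you were writing this up as an actual proof rather than citing it, those are the places where real work would be required.
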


The Semi-minimal Analysis Theorem suggests that a reasonable first step towards proving stability would be counting the ``basic building blocks'' appearing in the decomposition. We would like to thank Zoe Chatzidakis for pointing out to us Lemma 3.4 in \cite{Chatzidakis2020RemarksAT}, which will be the central technical tool in the analysis of one kind of these building blocks: types qf-internal to the fixed field. 

\begin{lem}
 \label{lem:rankonestable}
 Let $A$ be a complete set with $P^A$ an $\aleph_1$-saturated model of $T^P$. 
 
 Denote  

\[ 
ST(A) = \set{p \in  S_*(A) \colon p \text{ is stable }} 
 \]
 
and
 
 \[ 
IN(A) = \set{p \in  S_*(A) \colon p \text{ is qf-internal to  } P} 
 \]

Then $|ST(A)| + |IN(A)| \le |A|^{\aleph_0}$.
\end{lem}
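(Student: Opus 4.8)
\textbf{Proof proposal for Lemma~\ref{lem:rankonestable}.} The plan is to bound the two families of types separately. For $ST(A)$, the stable types in $S_*(A)$, I would proceed exactly as in classical stability theory: since each such $p$ is stable, the restriction $p\rest\ph$ for each formula $\ph(x,y)$ is definable, and the canonical parameter (a code in $\cC^{eq} = \cC$, using that $T$ eliminates imaginaries) of the $\ph$-definition is algebraic over $A$ by the standard forking argument — a stable type in $S_*(A)$ does not fork over $A$ in the $ACFA_0$ sense by the last Observation above, so its canonical base lies in $\acl(A)$. Since $T$ is countable, there are only countably many formulas $\ph$, and $|\acl(A)| \le |A| + \aleph_0$, so the number of choices for the full sequence of canonical bases is at most $(|A|+\aleph_0)^{\aleph_0} = |A|^{\aleph_0}$. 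A stable type in $S_*(A)$ is determined by this data, giving $|ST(A)| \le |A|^{\aleph_0}$.

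For $IN(A)$, the types qf-internal to $P$, the key input is Lemma 3.4 of \cite{Chatzidakis2020RemarksAT}, together with the hypothesis that $P^A$ is an $\aleph_1$-saturated model of $T^P$. The idea is that if $\tp(a/A)$ is qf-internal to $P$ and weakly orthogonal to $P$ (which is automatic here since $p \in S_*(A)$), then $a$ is interdefinable over $A$ with a tuple coming from a definable-over-$A$ family parametrized by $P$, and qf-internality lets one code $a$ by finitely many elements of $P^\cC$ over some finite parameter set $A_0 \subseteq A$. Since $P^A$ is $\aleph_1$-saturated, these $P$-coordinates can be taken in $P^A$ itself (as any finite $T^P$-type over $P^A$ is realized in $P^A$ by Fact~\ref{fct:typeoverP}, or directly by saturation). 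Thus each $p \in IN(A)$ is determined by a finite parameter tuple from $A$, a formula, and a finite tuple from $P^A$ of length bounded by the formula; counting these gives at most $|A|^{\aleph_0}$ possibilities (there are $|A|+\aleph_0$ choices of the finite parameter tuple and formula, and for each the finite $P^A$-tuple lives in a set of size $|P^A| \le |A|$, so $|A| \cdot \aleph_0 \cdot |A|^{<\omega} = |A|^{\aleph_0}$ at worst, or just $|A|+\aleph_0$ if one is careful). Adding the two bounds yields $|ST(A)| + |IN(A)| \le |A|^{\aleph_0}$.

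The main obstacle I expect is making precise the coding step for qf-internal types: one must verify that weak orthogonality to $P$ (i.e.\ $p \in S_*(A)$) combined with qf-internality forces the "internalizing" parameters to be controlled, in the sense that the type is determined over $A$ by finitely many elements of $P^A$ and a finite piece of $A$ — rather than by an unbounded subset of $P^A$. This is exactly where $\aleph_1$-saturation of $P^A$ and the cited Lemma 3.4 of \cite{Chatzidakis2020RemarksAT} do the work: the former lets us realize the relevant finite $P$-configuration inside $P^A$, and the latter controls the fibers of the internalizing maps. A secondary, more routine point is checking that the map $p \mapsto (\text{coding data})$ is injective on each of $ST(A)$ and $IN(A)$ — for $ST(A)$ this is the uniqueness of definable types over their canonical bases (again using elimination of imaginaries for $T$), and for $IN(A)$ it follows since $a \in \acl$ of $A$ together with its $P^A$-coordinates, so the type is determined.

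Finally, I would remark that the overall structure mirrors the semi-minimal analysis strategy: one counts the two kinds of "building blocks" from Fact~\ref{fact:decomposition} (stable modular $SU$-rank~1, and qf-internal to $P$) and bounds each by $|A|^{\aleph_0}$; the lemma isolates precisely these two counts for the rank-one pieces, to be assembled inductively in the proof of Theorem~\ref{thm:ACFA}.
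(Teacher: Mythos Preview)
Your treatment of $ST(A)$ is fine and matches the paper: stable types are definable over $A$, and counting definitions gives the bound.

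The $IN(A)$ argument, however, has a genuine gap. You correctly identify Lemma~3.4 of \cite{Chatzidakis2020RemarksAT} as the key input, but you misdescribe both what it says and how the hypotheses are used. Your ``push the $P$-coordinates into $P^A$'' step does not work as written: if $a \in \dcl(A_0,c)$ with $c \in P^\cC$ and you replace $c$ by some $c' \in P^A$ of the same type, you obtain a \emph{different} element $a' \in \dcl(A)$, not a determination of $\tp(a/A)$. Nothing in your sketch explains why $\tp(a/A)$ is controlled by finite data, as opposed to merely $\tp(a/A_0)$.

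What Lemma~3.4 actually yields is $\aleph_\eps$-isolation: there is a finite $B_0 \subseteq A$ with $\tp(a/\acl(B_0)) \vdash \tp(a/A)$. Counting then goes via types over finite subsets of $A$. But to invoke Lemma~3.4 one needs its hypotheses, and the paper spends most of the proof arranging them: first reduce to $A = \acl(A)$; then use Lemma~2.14 of \cite{Chatzidakis2020RemarksAT} to replace $a$ by an interalgebraic $a'$ with $\sigma(a') \in \dcl(Aa')$; then use Lemma~3.2 of \cite{Chatzidakis2020RemarksAT} to produce an $\aleph_1$-saturated model $M \models T$ with $P^M = P^A$ containing $A$. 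This last step is where the $\aleph_1$-saturation of $P^A$ is actually consumed --- not, as you suggest, to realize $P$-types inside $P^A$. Without these reductions, Lemma~3.4 does not apply, and your proposal does not supply an alternative route to the isolation conclusion.
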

\begin{proof}
	By Remark \ref{re:acl}, $\acl(A)$ is complete, and $P^{\acl(A)} = P^A$. So without loss of generality $A = \acl(A)$.
	
 	The statement is clear for $ST(A)$, since stable types over $A$ are definable over $A$ (but see also Remark \ref{rmk:superstable} below). 
	
	Now let $\tp(a/A)$ be qf-internal to $P$. By Lemma 2.14 \cite{Chatzidakis2020RemarksAT}, every (realization of a) type over $A$, which is qf-internal (and indeed almost internal) to the fixed field is inter-algebraic with a realization $a'$ of a type qf-internal to $P$ satisfying in addition $\sigma(a') \in \dcl(Aa')$. Hence without loss of generality $\sigma(a) \in \dcl(Aa)$. In addition, by Lemma 3.2 in \cite{Chatzidakis2020RemarksAT}, since $P^A$ is $\aleph_1$-saturated, there is an $\aleph_1$-saturated model $M\models T$ with $P^A = P^M$. 
	Now, all the assumptions of Lemma 3.4 in \cite{Chatzidakis2020RemarksAT} hold for $M$, $A$, and $a$. Indeed, $M\models ACFA_0$ is $\aleph_1$-saturated (hence $\aleph_\eps$-saturated), $A = \acl(A)$ with $P^M \subseteq A$, $p = \tp(a/A) \in S_*(A)$, hence $p \perp_a P^M$; $p$ is qf-internal to $P$, and $\sigma(a) \in \dcl(Aa)$. Therefore, the conclusion of \cite[3.4]{Chatzidakis2020RemarksAT} holds as well.  Specifically, there is a ``very small'' $B \subseteq A$ such that $\tp(a/B) \vdash \tp(a/A)$, where ``very small'' means that there is a finite $B_0$ such that $B = \acl(B_0)$.; i.e., $p$ is $\aleph_\eps$-isolated (see Definition 2.16 in \cite{Chatzidakis2020RemarksAT}).
	
	Therefore, $|ST(A)|$ is bounded above by the number of $\aleph_\eps$-isolated types over $A$, which is determined by the number of types over finite subsets of $A$. Since the number of such subsets is $|A|^{<\aleph_0} = |A|$, we conclude $|ST(A) \le |A|+2^{\aleph_0}$.

\end{proof}

\begin{remark}\label{rmk:superstable}
	Recall  that in $ACFA_0$ stable types are, in fact, ``superstable''; that is, every stable type is definable over a ``very small'' set (see \cite[2.6]{Chatzidakis2020RemarksAT}). This implies that the number of stable types over $A$ is also bounded above by $|A|+2^{\aleph_0}$; i.e., in the Lemma above, $|ST(A)| + |IN(A)| \le |A|+2^{\aleph_0}$.
\end{remark}

We can now conclude the proof of the Theorem. 

\medskip

\begin{proof}
 (of Theorem \ref{thm:ACFA}). 
 
 We work with a saturated model $M \models T$. By Lemma \ref{lem:satstab}, it is enough to prove that, for every complete $A$ with $P^A = P^M $, the number of types in $S_*(A)$ is ``not big''. 
 
 Let $A$ be complete with $P^A = P^M$. So $P^A$ is a saturated pseudo-finite field. First, Remark \ref{re:acl} allows us to make a few simplifications:
 
 \begin{itemize}
\item Without loss of generality, $A = \acl(A)$. 
\item It is enough to consider $S_*^1(A)$,  the set types $\tp(a/A) \in S_*(A)$ where $a$ is a singleton. This is because the number of $n$-types of the form $p = \tp(a_0a_1 \ldots a_{n-1}/A) \in S_*(A)$ is bounded by $\Sigma_{i<n} S_*^1(\acl(Aa_{<i}))$, where $a_{<i} = \lseq{a}{j}{i}$, and, since $p$ is complete, all the sets $B_i = \acl(Aa_{<i})$ have the same $P$-part, which equals $P^A$. Hence proving that the number of relevant 1-types over all sets $A'$ with $P^{A'} = P^M$ is ``small'', would imply the same for all $n$-types.

\end{itemize}

Since there is a unique generic 1-type (1-type of $SU$-rank $\omega$) over $A$ (see \cite[2.7]{Chatzidakis2020RemarksAT}), it is enough to count types of finite $SU$-rank. 

By the Decomposition Theorem (Fact \ref{fact:decomposition}), any (realization $a$ of a) type of finite rank is interalgebraic with a tuple $a_0a_1 \ldots a_{n-1}$ such that the type of each $a_i$ over $B_i = \acl(Aa_{<i})$ is either stable, or qf-internal to $P$. Since $\tp(a/A) \in S_*(A)$, so is the type $\tp(a_{<n}/A)$. By Remark \ref{re:acl} again, $\tp(a_i/B_i) \in S_*(B_i)$, and $P^{B_i} = P^A$ for all $i$. Therefore, just as before, in order to bound the number of types in $S_*(A')$ of finite rank for all $A'$ with $P^{A'} = P^M$, it is enough to do this for such types that are either stable over $A'$, or are qf-internal to $P$. And this is exactly the content of Lemma \ref{lem:rankonestable}.  

\end{proof}

\begin{remark}
In Remark \ref{rmk:superstable} we have observed that the number of ``semi-minimal'' types of finite rank over any complete set $A$ with a slightly saturated $P$-part is, in fact, at most $|A| + 2^{\aleph_0}$. The proof of the Theorem above implies that the same is true for $S_*(A)$ for such a set $A$, i.e.,  $T$ is, in a sense, ``fully superstable'' over $P$. This observation should become important once the theory of superstability over $P$ is developed. In particular, superstability should become handy in the study of complete subsets of $T$ with an $\aleph_\eps$-saturated $P$-part. One goal of such study would be obtaining a common generalization of results in \cite{Chatzidakis2020RemarksAT} and \cite{usvyatsov2024stability} for theories fully superstable over $P$. We intend to return to this in a future work. 
\end{remark}

\section{On nulldimensionality over $P$}

Recall that Gaifman's Conjecture says that a countable theory categorical over $P$ has the Gaifman property. Here we have shown that the Gaifman property follows from full stability over $P$, and established the existence of (interesting) fully stable theories. However, the strength of the assumption of full stability (in the context of Shelah's Classification  Theory) is not clear. For example, we do not know whether categoricity over $P$ implies full stability. 

In \cite{Sh234}, Shelah has showed that if a countable $T$ ``has absolutely no two-cardinal models'' and every ``u.l.a (uniformly locally atomic) $\mathcal{P}^-(n)$-system'' in $T$ is stable, then $T$ has  the Gaifman property (and moreover, it has the existence property over any u.l.a. $\mathcal{P}^-(n)$-system; a model of $T^P$ being a 0-dimensional particular case).

 It is probably self-explanatory what Shelah means by a ``two-cardinal model''  (but we give a definition below). We shall not define the notions of uniform local atomicity or a $\mathcal{P}^-(n)$-system here; for the purpose of this discussion, it is enough to just say that it is a complete set of a very particular form, resulting from ``stable amalgamation'' of models of $P$ and models of $T$. We refer the reader to the source \cite{Sh234} for details. Alternatively, the thesis of Anass Alzurba \cite{Alz-thesis}  contains an excellent and detailed exposition of Shelah's proof that, assuming that $T$ is countable, has absolutely no two-cardinal models, and every u.l.a system is stable, $T$ has the Gaifman property (and indeed the extension property for u.l.a. systems). 

Shelah's theorem stated above is combination of several difficult and important results, although it does not seem to fully establish Gaifman's Conjecture. The first assumption of the theorem (absolutely no two-cardinal model) follows  from categoricity over $P$ (at least for a countanble $T$). The second one follows only modulo some set-theoretic assumptions. Specifically, Shelah proves (in the same paper) a series of  \emph{consistent} non-structure results: if some u.l.a $\mathcal{P}^-(n)$-system in $T$ is unstable, then for every uncountable $\lambda = \lambda^{<\lam}$, there is a forcing notion that preserves cardinals $\le \lam$, and in $V[G]$ (the forcing extension) $T$ has non-structure in $\lam$, i.e., $2^\lam$ models with the same $P$-part, all pairwise non-isomorphic over $P$. 

So we do not (for now) seem to have a  full proof for Gaifman's Conjecture, as originally stated, although Shelah's Theorem comes very close (and potentially some tweaks in the non-structure arguments can lead to a complete solution). 

At the same time, in the context of model theory, it makes sense to consider properties that are absolute. Some questions related to constructions of models over a predicate, can easily ``slide'' into the realm of set theory. Chang's Conjecture is a very well-known and extensively studied example; but it is quite possible that related set theoretic phenomena affect properties such as the existence property and the number of non-isomorphic models over $P$. These are potentially very interesting problems and phenomena, but of less relevance to us here. Therefore, perhaps the notions of ``absolute categoricity'' and ``absolutely no two-cardinal models'' defined below,  anyway lead to the more ``correct'' questions in our context, and, ultimately, to results of more model theoretic flavor. Similarly, perhaps it makes  sense to aim for ``absolute structure results'', allowing seeking non-structure in forcing extensions (as is done in \cite{Sh234}).

\medskip

So in spite of the fact that the non-structure results in \cite{Sh234} are established using forcing, they still give a very powerful non-structure theory. At present we have no analogous results (and indeed no non-structure theory) for the failure of full stability. Hence the ``classification theory strength'' of Shelah's stability assumption may turn out to be significantly weaker than full stability defined here. 

On the other hand, the first assumption of Shelah's theorem (absolutely no two cardinal model) is very strong. In this section we will briefly discuss this assumption and observe that it is equivalent to a more intrinsic  (in the context of model theory over $P$) notion, which we will refer to as \emph{nulldimensionality} over $P$. The idea is that there is not a single dimension that can be varied without affecting $P$. More formally, no type over a model is orthogonal to $P$; i.e., for every model $M \models T$, every type $p \in S_*(M)$ is algebraic, i.e., realized in $M$. 

This is obviously a very significant restriction. It makes perfect sense when one investigates categoricity over $P$ as defined by Gaifman, but it does not hold in most examples. In fact, it breaks down as soon as one allows oneself to consider slightly weaker notions of categoricity over $P$ like the ones investigated by Hodges, e.g.,\cite{hodges-gaifman} (more familiar notions of categoricity in power, satisfied, for example, by the class of vector spaces over a  field). No example mentioned earlier in this paper is nulldimensional. In particular, in $ACFA_0$, we have the generic type (the type of rank $\omega$) over every model $M$, which is, of course, non-algebraic (and is orthogonal to $P = Fix(\sigma)$).

\medskip

Let us now make the discussion about a little more formal. 

\begin{de}
 \begin{enumerate}
 \item We say that $M_1,M_2 \models T$ are isomorphic over $P$ if there exists an isomorphism from $M_1$ onto $M_2$ which is the identity on $P$ (so in particular, $P_{M_1} = P_{M_2}$). We write $M_1 \cong_{P} M_2$. 
\item We say that $T$ is \emph{categorical over $P$} is for every $M, M' \models T$, $P^{M} = P^{M'} \implies M \cong_P M'$.
\item We say that $T$ is \emph{absolutely categorical over $P$} if $T$ is categorical over $P$ in every forcing extension of the ground universe $V$.
\end{enumerate}

\end{de}

So Gaifman's Conjecture states that if a countable $T$ is categorical over $P$, then it has the Gaifman property. Shelah's results mentioned above establish Gaifman's Conjecture under the assumption of absolute categoricity:

\begin{theorem}(Shelah \cite{Sh234}).
Let $T$ be countable and absolutely categorical. Then it has the Gaifman property. 
 
\end{theorem}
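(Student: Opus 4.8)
The plan is to reduce the statement to the main technical theorem of \cite{Sh234}: if a countable $T$ (satisfying Hypothesis \ref{asm:1}) has \emph{absolutely no two-cardinal model} and \emph{every u.l.a.\ $\mathcal{P}^-(n)$-system in $T$ is stable}, then $T$ has the existence property over every u.l.a.\ $\mathcal{P}^-(n)$-system; since a model $N \models T^P$ is the ($0$-dimensional) trivial case of such a system, this yields the Gaifman property. So the real content is to show that absolute categoricity over $P$ implies both hypotheses of Shelah's theorem.

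\textbf{Absolute categoricity $\Rightarrow$ absolutely no two-cardinal model.} I would first use the observation of this section that ``no two-cardinal model'' is equivalent to nulldimensionality over $P$: for every $M \models T$, every $p \in S_*(M)$ is algebraic. If this failed, i.e.\ some $p \in S_*(M)$ were non-algebraic, then, using weak orthogonality of $p$ to $P$ together with a L\"owenheim--Skolem/compactness argument over $P$ (as in the constructions of section 7), one builds proper elementary extensions $M_1 \succ M$ with $P^{M_1} = P^M$ of arbitrarily large cardinality; comparing the dimension of a nonforking extension of $p$ in two such models of the same cardinality produces two models with the same $P$-part not isomorphic over $P$, contradicting categoricity over $P$. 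Running this argument inside an arbitrary forcing extension $V[G]$, where $T$ is still categorical over $P$ by hypothesis, shows the conclusion is absolute.

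\textbf{Absolute categoricity $\Rightarrow$ stability of all u.l.a.\ $\mathcal{P}^-(n)$-systems.} This is the contrapositive of Shelah's consistent non-structure results in \cite{Sh234}: if some u.l.a.\ $\mathcal{P}^-(n)$-system in $T$ is unstable, then for a suitable uncountable $\lambda = \lambda^{<\lambda}$ there is a forcing notion preserving cardinals $\le \lambda$ such that in the extension $V[G]$ the theory $T$ has $2^\lambda$ models of size $\lambda$ with a common $P$-part, pairwise non-isomorphic over $P$ --- impossible, since $T$ is categorical over $P$ in $V[G]$ by absolute categoricity. Hence every u.l.a.\ system is stable. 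With both hypotheses verified, Shelah's theorem applies, and specializing to the trivial system shows that every $N \models T^P$ occurs as $P^M$ for some $M \models T$, which is the Gaifman property.

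\textbf{Main obstacle.} The genuinely hard part will be what is concealed inside the Shelah theorem invoked above: the construction of a ``primary'' (locally constructible and suitably atomic) model of $T$ over a $\mathcal{P}^-(n)$-system, which requires the delicate combinatorics of stable amalgamation of systems of models of $P$ and of $T$, the correct notions of isolation in this setting, and heavy bookkeeping along the construction --- the technical heart of \cite{Sh234} (see \cite{Alz-thesis} for a full exposition). A secondary but substantial difficulty is the forcing side of the second step, i.e.\ the consistent non-structure theorems themselves; both of these lie well beyond the routine and would have to be imported wholesale rather than reproved here.
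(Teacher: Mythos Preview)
Your proposal is correct and matches the paper's approach exactly: derive both hypotheses of Shelah's structure theorem from absolute categoricity, the second via the contrapositive of his consistent non-structure results in forcing extensions. One minor simplification for the first hypothesis: you need neither nonforking nor dimension comparison (which are unavailable without stability of $T$ itself) --- for countable $T$, categoricity over $P$ directly rules out two-cardinal models (L\"owenheim--Skolem down to $|P^M|$ over $P^M$ gives two models of different cardinality with the same $P$-part), and this is already equivalent to having no Vaughtian pair, hence to absolutely no two-cardinal model, so no forcing is needed for this step.
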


This theorem is never stated in writing. But it follows from the combination of the structure and the non-structure results mentioned above. Assuming ``absolutely no two-cardinal models'', Shelah proves, on the one hand, Gaifman's property assuming stability over $P$ of all u.l.a $\mathcal P^-(n)$-systems, and on the other, he establishes consistent non-structure results assuming the existence of an unstable u.l.a $\mathcal P^-(n)$-system.

\begin{de}\label{de:2card}
\begin{enumerate}
\item 
 We say that $M \models T$ is a \emph{two-cardinal model of $T$ over $P$}, or just a \emph{two-cardinal model} if $|T|\le |P^M| < |M|$.
 \item 
 Given two cardinals $\ka \le \lam$, we say that $M \models T$ is a model of type $(\ka,\lam)$ (over $P$) if $|P^M| = \ka$ and $|M| = \lam$. 
 \item
 We say that $T$ has absolutely no two-cardinal models if no $T_0 \subseteq T$ finite has a two cardinal model.
\item 
We say that $(M,N)$ is a \emph{Vaughtian pair} over $P$ if:
\begin{itemize}
\item $M \elem N \elem \cC$
\item $M \neq N$
\item $P^M = P^N$
\end{itemize}
\end{enumerate}

\end{de}

%

\begin{re}
 If $T$ has absolutely no two-cardinal models, it has no two-cardinal models. As Shelah points out in \cite[1.2]{Sh234}, $T$ has absolutely no two-cardinal models if and only if $T$ has no Vaughtian pair over $P$. 
\end{re}

\begin{re}
If $T$ is categorical over $P$, it has no two-cardinal models of type $(\ka,\lam)$ with $|T| \le \ka$. If $T$ is countable (and $P$ infinite), and $T$ is categorical over $P$, then $T$ has no two-cardinal models. 
\end{re}

\smallskip

Next we observe that we have something like ``generalized Vaught's test'' for completeness of theories: every two saturated models with the same $P$-part are isomorphic over $P$. This is implicit in \cite{Sh234}, but we elaborate in order to make this section more self-contained. 

\begin{lem}\label{lem:satprimary}(Fact 1.7 in \cite{Sh234})\label{lem:aleph1-sat}
 Let $M\models T$, $a \in M$. Then $p=\tp(a/P^M)$ is $|T|^+$-isolated, and $\tp(a/P^M) \vdash \tp(a/P^\cC)$.
\end{lem}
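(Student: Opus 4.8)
The plan is to write down an explicit small partial type over $P^M$ that isolates $\tp(a/P^\cC)$; both clauses of the lemma then fall out at once. The key structural input is that $M$ is itself complete, which is Observation~\ref{obs:complete} applied with $A=M$, so the internal characterizations of definability available for complete sets apply to $M$.

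First I would fix the uniform family of defining formulas $\langle \Psi_\psi(\bar y,\bar z) : \psi(\bar x,\bar y)\in L(T)\rangle$ furnished by Observation~\ref{obs:uniformdef}. Since $M$ is complete, Corollary~\ref{co:defcomplete} (equivalently, the argument proving Observation~\ref{obs:defcomplete}) provides, for each $\psi(\bar x,\bar y)\in L(T)$, a parameter $\bar c_\psi\subseteq P^M$ with
\[
\cC\models(\forall \bar y\subseteq P)\bigl[\psi(a,\bar y)\longleftrightarrow \Psi_\psi(\bar y,\bar c_\psi)\bigr].
\]
The point to be a little careful about is that the relativized quantifier $\forall\bar y\subseteq P$ ranges over all of $P^\cC$, so $\Psi_\psi(\bar y,\bar c_\psi)$ genuinely defines $\tp_\psi(a/P^\cC)$ (not merely the $\psi$-type of $a$ over $P^M$), while the parameter $\bar c_\psi$ nevertheless lies in $P^M$ precisely because $M$ is complete.

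Then I would set
\[
r(\bar x) = \bigl\{ (\forall\bar y\subseteq P)\,[\psi(\bar x,\bar y)\longleftrightarrow\Psi_\psi(\bar y,\bar c_\psi)] : \psi(\bar x,\bar y)\in L(T)\bigr\},
\]
a partial type over $P^M$ with $|r|\le|L(T)|\le|T|$, satisfied by $a$, so $r\subseteq\tp(a/P^M)\subseteq\tp(a/P^\cC)$. To finish I would check $r\vdash\tp(a/P^\cC)$: if $a'\models r$ then for every $\psi(\bar x,\bar y)\in L(T)$ and every $\bar b\subseteq P^\cC$,
\[
\cC\models\psi(a',\bar b)\iff\cC\models\Psi_\psi(\bar b,\bar c_\psi)\iff\cC\models\psi(a,\bar b),
\]
the first equivalence because $a'\models r$ and the second by the choice of $\bar c_\psi$; since every formula of $\tp(a/P^\cC)$ is of the form $\psi(\bar x,\bar b)$ with $\psi\in L(T)$ and $\bar b\subseteq P^\cC$, this gives $\tp(a'/P^\cC)=\tp(a/P^\cC)$. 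Now $r\subseteq\tp(a/P^M)$ together with $r\vdash\tp(a/P^\cC)\supseteq\tp(a/P^M)$ yields both conclusions simultaneously: $r$ witnesses that $\tp(a/P^M)$ is $|T|^+$-isolated, and $\tp(a/P^M)\vdash\tp(a/P^\cC)$.

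I do not expect a genuine obstacle here; the argument is essentially an unwinding of what completeness of $M$ says, combined with uniform definability of types over $P^\cC$. The one place that rewards attention — as flagged above — is the scope of the $P$-relativized quantifiers, making sure that pinning down each $\psi$-type of $a$ over $P^\cC$ by a single $P^M$-parameter definition really does pin down the whole type $\tp(a/P^\cC)$, rather than only $\tp(a/P^M)$.
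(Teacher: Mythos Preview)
Your proof is correct and follows essentially the same approach as the paper: both collect, for each formula $\psi$, a defining parameter $\bar c_\psi\in P^M$ for $\tp_\psi(a/P^\cC)$ (using completeness of $M$ and uniform definability), observe that the resulting partial type has size $\le|T|$, and verify via the chain $\psi(a',\bar b)\Leftrightarrow\Psi_\psi(\bar b,\bar c_\psi)\Leftrightarrow\psi(a,\bar b)$ that it pins down the full type over $P^\cC$. Your write-up is in fact a bit more explicit and careful about the scope of the $P$-relativized quantifier than the paper's sketch.
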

\begin{proof}
 Let $B \subseteq P^M$, $|B| = |T|^+$, be such that $\tp(a/P^\cC)$ is definable over $B$. More precisely, for every formula  (without parameters)  $\ph(x,y)$ there is $c\in B$ such that the formula $\forall y \in P (\ph(x,y) \iff \Psi_\ph(y,c)$ is in $p$ (compare to  Corollary \ref{co:type}; note that we do not need additional parameters from $P$ in the formula). If $a\equiv_B a'$ (or $a'$ just satisfies all the formulas above), then for every formula $\ph(x,y)$ and $b \in P$, we have $\ph(a,b) \iff \Psi_(b,c) \iff \ph(a',b)$, as required.  
 
 \end{proof}

\begin{co}\label{co:modeliso}
\begin{enumerate}
\item Let $M \models T$, and let $\a \in M$ be a tuple of length $\ka$. Then $\tp(\a/P^M)$ is $(\ka + |T|^+)$-isolated. 
\item  Let $M \models T$, $A \subseteq M$, $|A| = \ka$, $a \in M$ finite, then $\tp(a/P^M\cup A)$ is $(\ka + |T|^+)$-isolated.
\end{enumerate}
\end{co}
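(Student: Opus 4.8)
The plan is to bootstrap both clauses from Lemma \ref{lem:satprimary} by a straightforward induction/union argument. For clause (1), enumerate the tuple $\a = \langle a_i : i < \ka\rangle$ and, for each $i<\ka$, apply Lemma \ref{lem:satprimary} to the element $a_i$ (together with the finitely many coordinates it depends on — but since isolation over $P^M$ in the sense of Lemma \ref{lem:satprimary} is about $\tp(a_i/P^M)$, we simply get a subset $B_i \subseteq P^M$ with $|B_i| \le |T|^+$ such that $\tp(a_i/B_i) \vdash \tp(a_i/P^\cC)$, and in particular $\tp(a_i/B_i) \vdash \tp(a_i/P^M)$). Now set $B = \bigcup_{i<\ka} B_i$; then $|B| \le \ka + |T|^+$, and $B \subseteq P^M$. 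I claim $\tp(\a/B) \vdash \tp(\a/P^M)$: if $\a' \equiv_B \a$, then each $a_i' \equiv_{B_i} a_i$, hence $a_i' \equiv_{P^M} a_i$; but this is not quite enough to conclude $\a' \equiv_{P^M} \a$ as tuples, so the argument must instead be run by induction on $i$, absorbing the previously chosen coordinates. Precisely, by induction on $i<\ka$ choose $B_i \subseteq P^M$ with $|B_i|\le |T|^+$ such that $\tp(a_i / P^M \cup \{a_j : j<i\})$ is isolated over $B_i \cup \{a_j : j<i\}$; this is possible by clause (2) (proved first, see below) applied with $A = \{a_j : j<i\}$ once $|i| \le \ka$, or, for the purely countable-tuple-free version, by a direct compactness argument as in Lemma \ref{lem:satprimary}. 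Then $B := \bigcup_{i<\ka} B_i$ has size $\le \ka + |T|^+$ and $\tp(\a/B) \vdash \tp(\a/P^M)$ by the usual chain argument: a realization of $\tp(\a/B)$ can be built coordinate-by-coordinate to agree with $\tp(\a/P^M)$. In fact it is cleanest to prove clause (2) first and then derive clause (1) as the special case of iterating (2).

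For clause (2): let $A \subseteq M$ with $|A| = \ka$, and $a \in M$ finite. We want $\tp(a/P^M \cup A)$ to be $(\ka + |T|^+)$-isolated. Apply Lemma \ref{lem:satprimary} (or rather its proof) to the finite tuple $aA'$ for each finite $A' \subseteq A$ — better: apply it directly to $a$ over $P^M$ to get $B_0 \subseteq P^M$, $|B_0| \le |T|^+$, with $\tp(a/B_0) \vdash \tp(a/P^\cC) \supseteq \tp(a/P^M)$. But we also need the $A$-part of the type isolated. Here the point is that $\tp(a/P^M \cup A)$ as a type need only be pinned down by $|A| + |T|^+$ many formulas: take the isolating set to be $\tp(a/B_0 \cup A)$, a type over a set of size $\le \ka + |T|^+$. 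To see $\tp(a/B_0 \cup A) \vdash \tp(a/P^M \cup A)$, suppose $a' \equiv_{B_0 \cup A} a$; then $a' \equiv_{B_0} a$, so $a' \equiv_{P^M} a$, and combining with $a' \equiv_A a$ — wait, this again requires care because $\tp(a/P^M)$ and $\tp(a/A)$ together need not determine $\tp(a/P^M \cup A)$. The correct move is to note that by Hypothesis \ref{asm:1} (quantifier elimination and stable embeddedness), any formula $\ph(a, m, \bar{c})$ with $m \in P^M$ and $\bar c \in A$ reduces, via the uniform definitions $\Psi_\ph$ of Observation \ref{obs:uniformdef}, to a Boolean combination of: a formula over $A$ in $a$, and a formula $\Psi_\ph(m, \bar d)$ over $P^M$ whose parameter $\bar d$ is definable over $a$ and the $A$-parameters — so the canonical parameters $\bar d$ lie in $\dcl(aA)$, and since $\tp(a/B_0)$ already determines all formulas of the form $\forall y \in P\,(\ph(x,y) \leftrightarrow \Psi_\ph(y,c))$ over $P^M$ (as in the proof of Lemma \ref{lem:satprimary}), the combined type $\tp(a/B_0 \cup A)$ determines $\tp(a/P^M \cup A)$.

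The main obstacle I anticipate is exactly this last point: handling formulas $\ph(x, \bar m, \bar a)$ that mix parameters from $P^M$ and from $A$, i.e., showing that such a formula's truth value on $x$ is controlled once we know $\tp(x/B_0 \cup A)$. The clean way around it is to invoke stable embeddedness in the strong form already used throughout the paper (Corollary \ref{co:type}, Corollary \ref{co:complete_char}): the $\ph(x, \bar m, \bar a)$-information about $a$ over $P^M$, for fixed $\bar a \in A$, is governed by a definition $\Psi_{\ph'}(\bar y, \bar z)$ with $\bar z$ a tuple of canonical parameters in $P^M$, and those canonical parameters are determined by $\tp(a/P^M)$, hence by $\tp(a/B_0)$. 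Once this reduction is in place, everything else is a routine counting of parameters: $|B_0 \cup A| \le |T|^+ + \ka = \ka + |T|^+$, giving the $(\ka+|T|^+)$-isolation, and clause (1) follows by taking $A$ to be an enumeration of $\a$ itself (or by the coordinate-by-coordinate induction sketched above), with the isolating set $\bigcup_{i<\ka}(B_i \cup \{a_j: j<i\})$ of size $\le \ka + |T|^+$.
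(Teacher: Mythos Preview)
Your proposal has a genuine gap in clause (ii). You obtain $B_0 \subseteq P^M$ with $|B_0| \le |T|$ by applying Lemma~\ref{lem:satprimary} to $a$ alone, and then claim that $\tp(a/B_0 \cup A) \vdash \tp(a/P^M \cup A)$. Your justification is that the canonical parameter $\bar z$ for the set $\{\bar m \in P : \ph(a,\bar m,\bar a)\}$ is ``determined by $\tp(a/P^M)$, hence by $\tp(a/B_0)$''. This is not correct: $\bar z$ depends on \emph{both} $a$ and $\bar a$ (indeed $\bar z \in \dcl(a\bar a) \cap P$), and there is no reason it should lie in $B_0$, which was chosen without any reference to $A$. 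Concretely, suppose $a' \equiv_{B_0 \cup A} a$. You know $a'\bar a \equiv_{B_0} a\bar a$ and (via $B_0$) $a' \equiv_{P^\cC} a$, but neither yields $a'\bar a \equiv_{P^\cC} a\bar a$; the witnessing formula $\forall \bar m \in P\,\bigl(\ph(x,\bar m,\bar a) \leftrightarrow \Psi_\ph(\bar m,\bar z)\bigr)$ mixes the parameter $\bar a \in A$ with $\bar z \in P^M \setminus B_0$, so it lies in $\tp(a/P^M \cup A)$ but is not forced by $\tp(a/B_0 \cup A)$.

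The fix is exactly the approach you mention and then abandon (``apply \ldots\ to the finite tuple $aA'$ for each finite $A' \subseteq A$''): run the argument of Lemma~\ref{lem:satprimary} on $a\bar a$ for every finite $\bar a \subseteq A$, equivalently on the full tuple $aA$. The resulting collection of canonical parameters has size at most $|T| \cdot |A|^{<\omega} = \ka + |T|$, and together with $A$ it isolates $\tp(a/P^M \cup A)$. This is also the paper's implicit route: clause (i) is the direct generalization of the Lemma to tuples of length $\ka$ (same proof, just more formulas to account for), and clause (ii) then follows immediately from clause (i) applied to the tuple $aA$. Your proposed order of dependence---prove (ii) first, then iterate to get (i)---can be made to work, but only once (ii) is established correctly, and the direct route is shorter.
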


%

\begin{co}(Theorem 1.5 in \cite{Sh234})
 Let $M \models T$ saturated of cardinality $\lam > |T|$. Then $M$ is $\lam$-primary over $P^M$ (see Definition \ref{dfn:lprimary}).
\end{co}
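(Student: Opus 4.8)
The goal is to show that a saturated model $M \models T$ of cardinality $\lam > |T|$ is $\lam$-primary over $P^M$; that is, $M$ is both $\lam$-constructible over $P^M$ and $\lam$-saturated. Since $M$ is saturated of cardinality $\lam$, it is automatically $\lam$-saturated, so the entire content is in producing a $\lam$-construction of $M$ over $P^M$: an enumeration $M = P^M \cup \{d_i : i < \al\}$ such that for each $i$, the type $\tp(d_i / P^M \cup \{d_j : j < i\})$ is $\lam$-isolated, i.e.\ implied by a subtype of cardinality $<\lam$.

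\begin{proof}
By hypothesis $M$ is saturated of cardinality $\lam > |T|$, hence in particular $\lam$-saturated. It remains to show that $M$ is $\lam$-constructible over $P^M$. Enumerate $M \setminus P^M = \{a_i : i < \lam\}$ (using $|M| = \lam$), and we build the construction sequence following this enumeration, so $d_i = a_i$ and at stage $i$ the set already constructed is $B_i = P^M \cup \{a_j : j < i\}$. We must check that $\tp(a_i / B_i)$ is $\lam$-isolated over $B_i$. Since $i < \lam$, the set $A_i = \{a_j : j < i\}$ has cardinality $|i| < \lam \le \ka$ for the relevant $\ka$, so by Corollary \ref{co:modeliso}(ii) (applied with $A = A_i$, $\ka = |A_i|+|T|$, and $a = a_i$, which is finite), the type $\tp(a_i / P^M \cup A_i) = \tp(a_i/B_i)$ is $(|A_i| + |T|^+)$-isolated, hence $\lam$-isolated (as $|A_i| + |T|^+ < \lam$ because $|A_i| < \lam$ and $|T|^+ \le \lam$). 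Thus each step of the construction is legitimate, and $M = P^M \cup \{a_i : i < \lam\}$ is a $\lam$-construction over $P^M$. Therefore $M$ is $\lam$-constructible over $P^M$; being also $\lam$-saturated, $M$ is $\lam$-primary over $P^M$ in the sense of Definition \ref{dfn:lprimary}.
\end{proof}

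The only slightly delicate point is making sure the enumeration is by singletons so that Corollary \ref{co:modeliso}(ii) applies at each step with $a$ finite, and that the bound $|A_i|+|T|^+$ stays strictly below $\lam$; both are immediate since $\lam$ is a cardinal $> |T|$ and each initial segment $A_i$ has size $|i| < \lam$. No obstacle of substance arises: the statement is essentially a repackaging of Corollary \ref{co:modeliso} into the language of $\lam$-constructions from Definition \ref{dfn:primary}. (If one wants $M$ itself to literally be presented as $P^M \cup \{d_i : i < \al\}$ with $\al$ an ordinal, take $\al = \lam$ under the chosen enumeration.)
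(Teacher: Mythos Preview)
Your proof is correct and matches the paper's intended argument: the paper states this result as an immediate corollary of Corollary~\ref{co:modeliso} without spelling out the enumeration, and your construction is exactly the routine unpacking. One small slip in the arithmetic: when $\lam = |T|^+$ you have $|A_i| + |T|^+ = \lam$, not $< \lam$ (and the stray ``$\le \ka$'' is garbled), but this is harmless since $(|A_i|+|T|^+)$-isolated with $|A_i|+|T|^+ \le \lam$ is already $\lam$-isolated.
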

%
%

%

\begin{theorem}\label{thm:satiso}
\begin{enumerate}
\item
	Let $M \models T$ be saturated of cardinality $\lam>|T|$, and $M'$ of cardinality $\lam$ with $P^{M'} = P^M$. Then there exists an elementary embedding $f \colon M' \hookrightarrow M$ over $P$ (i.e., $f\rest P = id$). Moreover, if $A \subseteq M'$, $|A|<\lam$,  $f \colon P^M \cup A \hookrightarrow M$ is a partial elementary embedding, and $a \in M'$, then $f$ can be extended to $P^M \cup A \cup \set{a}$. 
\item 
 	Let $M, M$ be saturated of the same  cardinality $\lam > |T|$ with $P^{M} = P^{M'}$. Then $M \cong_P M'$. 
\end{enumerate}
\end{theorem}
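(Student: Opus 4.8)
The plan is to prove both clauses of Theorem~\ref{thm:satiso} by a standard back-and-forth argument, using the isolation results established just above (Lemma~\ref{lem:aleph1-sat} and Corollary~\ref{co:modeliso}) as the crucial ingredient that lets the amalgamation step go through over $P$. First I would prove clause (i). We build an increasing continuous chain of partial elementary maps $f_\al \colon P^M \cup A_\al \hookrightarrow M$, where $A_0 = \emptyset$, $f_0 = \mathrm{id}$ on $P^M$ (this is legitimate since $P^{M'} = P^M$), and at successor stages we add one element $a_\al$ of $M'$, enumerating all of $M'$ so that $\bigcup_\al A_\al = M'$. At limit stages we take unions. The content of the ``moreover'' part of clause (i) is precisely the successor step: given $f \colon P^M \cup A \hookrightarrow M$ partial elementary with $|A| < \lam$ and $a \in M'$, we must extend $f$ to $a$.

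For that successor step, set $q = \tp(a / P^{M'} \cup A)$, a type over $P^M \cup A$ of the monster $\cC$. By Corollary~\ref{co:modeliso}(ii), $q$ is $(|A| + |T|^+)$-isolated, so it is determined by its restriction $q_0$ to some subset of $P^M \cup A$ of size $|A| + |T| < \lam$. Push $q_0$ forward along $f$ to get a type $f(q_0)$ over a subset of $P^M \cup f(A) \subseteq M$ of size $< \lam$; since $M$ is $\lam$-saturated, $f(q_0)$ is realized in $M$, say by $b$. Because $q_0$ isolates $q$ (equivalently $q_0 \equiv q$) and $f$ is elementary, $b$ realizes $f(q)$, so setting $f(a) = b$ extends $f$ elementarily. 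This proves clause (i). For clause (ii), with both $M$ and $M'$ saturated of cardinality $\lam > |T|$ and $P^M = P^{M'}$, we run the full back-and-forth: alternately enumerate $M'$ and $M$, extending the partial isomorphism (which starts as the identity on $P^M$) one element at a time, using clause (i)'s successor step in each direction. The resulting union is an isomorphism $M \to M'$ that is the identity on $P$, i.e.\ $M \cong_P M'$.

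The only genuine subtlety --- and the step I expect to be the main obstacle to watch --- is making sure the isolation bound stays strictly below $\lam$ throughout. At any stage of the construction the domain $A_\al$ of the partial map has size $< \lam$, so $|A_\al| + |T|^+ \le \lam$; but we need the isolating subtype to have size $< \lam$ so that $\lam$-saturation of $M$ (respectively $M'$) applies. When $\lam > |T|^+$ this is immediate; the borderline case $\lam = |T|^+$ needs slightly more care, but there $|A_\al| \le |T|$ and the bound $|A_\al| + |T|^+ = |T|^+ = \lam$ is not quite good enough as stated --- one resolves this by noting that a $(|T|^+)$-isolated type is isolated by a set of size $\le |T|$, i.e.\ the relevant bound is really $|A_\al| + |T| < |T|^+ = \lam$, matching what Corollary~\ref{co:modeliso} actually delivers (the isolating set has size $\le |A| + |T|$). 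Once this bookkeeping is pinned down, everything else is the routine back-and-forth machinery. A minor point: one should remark at the outset that an elementary embedding over $P$ between saturated models of the same cardinality is automatically onto, so the ``embedding'' in clause (i), when $M'$ is also saturated of cardinality $\lam$, upgrades to an isomorphism --- but since clause (ii) is proved directly by two-sided back-and-forth this is not strictly needed.
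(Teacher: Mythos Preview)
Your proposal is correct and follows essentially the same approach as the paper: the successor step via Corollary~\ref{co:modeliso}(ii) and $\lam$-saturation is exactly the paper's argument for (i), and (ii) is likewise derived from (i) by back-and-forth. Your extra bookkeeping about the borderline case $\lam = |T|^+$ is fine but unnecessary once you observe (as the paper does) that $(\kappa + |T|^+)$-isolation with $\kappa < \lam$ and $\lam \ge |T|^+$ simply means $\lam$-isolation, hence the isolating subtype already has size $<\lam$.
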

\begin{proof}
\begin{enumerate}
\item Let $N = P^{M} = P^{M'}$. It is enough to prove the last statement, so let $A, a$ be as there. By Corollary \ref{co:modeliso}(ii), the type $\tp(a/N\cup A)$ is $\lam$-isolated. So there is $C \subseteq N$ such that $\tp(a/C \cup A) \vdash \tp(a/N \cup A)$. Since $M$ is saturated, we can realize $f(\tp(a/C \cup A))$ in $M$, as required. 


\item   Follows from (i) by a back and forth argument. 

\end{enumerate}

\end{proof}

\medskip

We now define the notion of nulldimensionality and relate it to Vaughtian pairs, two-cardinal models, and categoricity. 

\begin{de}
 We say that $T$ is \emph{nulldimensional} over $P$ if for every $M \models T$, every $p \in S_*(M)$ is algebraic (i.e., realized in $M$).
\end{de}

\begin{theorem}\label{thm:nulldim}
 Let $T$ be any theory, and let $\lam$ be such that $|T|<\lam = \lam^{<\lam}$. 
Then the following are equivalent:
\begin{enumerate}
\item $T$ is not nulldimensional over $P$.
\item For some saturated $M \models T$, we have a non-algebraic $p \in S_*(M)$.
\item $T$ has a Vaughtian pair $(M,N)$ over $P$.
\item $T$ has a Vaughtian pair $(M,N)$ over $P$ with $P^M = P^N$ saturated of cardinality $\lam$.
\item $T$ has a two-cardinal model of cardinality $>|T|$.  
\item $T$ has a two-cardinal model with $P^M$ saturated. 
\item $T$ has a two-cardinal model of type ($\lam, \lam^+$) with $P^M$ saturated.  
\item $T$ has a two-cardinal $\lam$-saturated model of type ($\lam, \lam^+$).  
\end{enumerate}
\end{theorem}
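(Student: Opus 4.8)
The plan is to prove the cycle of implications in roughly the order $(1)\Leftrightarrow(2)$, then $(2)\Rightarrow(3)$, $(3)\Rightarrow(5)$, $(5)\Rightarrow(1)$ for the ``basic'' equivalences, and then separately upgrade to the saturated/two-cardinal-of-specified-type versions $(4),(6),(7),(8)$ using the saturation hypothesis $\lam=\lam^{<\lam}$. First I would observe that $(1)\Leftrightarrow(2)$ is essentially immediate in one direction and requires a transfer argument in the other: if $T$ is not nulldimensional, some model $M_0$ carries a non-algebraic $p_0\in S_*(M_0)$; embed $M_0$ into a saturated $M$ of cardinality $\lam$ (possible since $\lam^{<\lam}=\lam$), and extend $p_0$ to a type over $M$ — here one must check that a non-algebraic $*$-type over $M_0$ extends to a non-algebraic $*$-type over $M$ (any nonforking-style heir, or just any coheir, will do, but the cleanest route is: take any $a\models p_0$ in $\cC$ with $tp(a/\cC)$ a coheir of $p_0$; then $a\notin M$ since $p_0$ non-algebraic, and $tp(a/M)\in S_*(M)$ because $tp(a/M\cup P^\cC)=tp(a/P^M)$ is still finitely satisfiable in $M_0$, hence definable over $P^{M_0}\subseteq P^M$). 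The converse $(2)\Rightarrow(1)$ is trivial.

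Next, $(2)\Rightarrow(3)$: given a non-algebraic $p\in S_*(M)$ with $M$ saturated of size $\lam$, let $a\models p$ and let $N$ be a model of $T$ with $M\cup\{a\}\subseteq N$ and $P^N=P^M$ — this is exactly where we invoke Fact \ref{fact:satexist} (full existence over the saturated model $P^M$), applied to the complete set $M\cup\{a\}$ (it is complete since $tp(a/M)\in S_*(M)$). Then $M\prec N$, $P^M=P^N$, and $a\in N\setminus M$, so $(M,N)$ is a Vaughtian pair over $P$. For $(3)\Rightarrow(5)$ one passes from a Vaughtian pair $(M_0,N_0)$ to a two-cardinal model by the usual Löwenheim–Skolem / elementary chain trick: build an increasing elementary chain $M_0=M_0'\prec N_0'=M_1'\prec N_1'=M_2'\prec\cdots$ of copies of the pair along a long index set and take the union, arranging $|P|$ to stay fixed; more simply, one takes an elementary chain realizing the pair's ``gap'' $\lam^+$ many times to get a model of type $(\kappa,\lam^+)$ for suitable $\kappa\ge|T|$. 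For $(5)\Rightarrow(1)$: a two-cardinal model $M$ of cardinality $>|T|$, say of type $(\kappa,\mu)$ with $|T|\le\kappa<\mu$, has some $a\in M$ with $a\notin\acl(P^M)$ (else $|M|\le|P^M|+|T|=\kappa<\mu$, contradiction) — but we need $tp(a/P^M)$, or rather $tp(a/M')$ for an appropriate submodel $M'$, to be a $*$-type; here I would use Lemma \ref{lem:aleph1-sat}/Corollary \ref{co:modeliso} to find a submodel $M'\prec M$ of size $\le\kappa$ with $P^{M'}=P^M$ and $a\notin M'$, noting that $tp(a/M')\in S_*(M')$ automatically because $M'$ is a model with $P^{M'}=P^M=P^\cC\cap M$ and the relevant definitions are over $P^{M'}$ — actually the cleanest statement is that for $a\in M$, $M'\prec M$ with $P^{M'}=P^M$, we have $tp(a/M')\in S_*(M')$ iff $M'\cup\{a\}$ is complete iff $acl(M'a)\cap P\subseteq M'$, and one must ensure $M'$ can be chosen so that this holds while still $a\notin M'$; building $M'$ as a union of a short elementary chain that absorbs $acl$ of whatever $P$-elements appear is the right device.

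Finally, for the ``decorated'' equivalences: $(4)\Rightarrow(3)$, $(6)\Rightarrow(5)$, $(7)\Rightarrow(6)$, $(8)\Rightarrow(5)$ are all trivial weakenings, so only $(1)$ or $(2)\Rightarrow(4),(7),(8)$ need work, and these are all the same argument run with care about cardinalities, using $\lam=\lam^{<\lam}$ to guarantee saturated models of size $\lam$ exist and that the constructions in $(2)\Rightarrow(3)$ and $(3)\Rightarrow(5)$ can be carried out keeping $P$ saturated of size exactly $\lam$ and the ambient model of size exactly $\lam$ or $\lam^+$. Concretely: starting from a non-algebraic $p\in S_*(M)$ with $M$ saturated of size $\lam$ (clause $(2)$), apply Fact \ref{fact:satexist} to get $N\models T$ with $M\cup\{a\}\subseteq N$, $P^N=P^M$, $|N|\le\lam$; saturating further (an elementary extension of size $\lam$, keeping $P$ fixed, which is possible by Fact \ref{fact:satexist} again or by a direct construction since $P^M$ is already saturated) gives the Vaughtian pair of clause $(4)$; iterating the pair $\lam^+$ times gives a two-cardinal model of type $(\lam,\lam^+)$ as in $(7)$, and a routine saturation of that model in its own cardinality $\lam$ (not $\lam^+$) yields $(8)$. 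The main obstacle I expect is not any single implication but the bookkeeping in $(5)\Rightarrow(1)$ and $(3)\Rightarrow(1)$: extracting from a ``raw'' Vaughtian pair or two-cardinal model an honest \emph{non-algebraic $*$-type over a model}, since membership in $S_*$ is a delicate closure condition — one really has to choose the smaller model $M'$ to be $acl$-closed relative to $P$ (equivalently, complete), and verify $a$ survives outside it; the saturation hypothesis and Corollary \ref{co:modeliso} on $\lam$-isolation of types over $P^M$ are what make this possible.
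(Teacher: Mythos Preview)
Your proposal has the right overall shape, but there are two genuine problems and one misplaced worry.

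\medskip

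\textbf{The coheir argument for $(1)\Rightarrow(2)$ does not work as written.} You claim that if $q=\tp(a/M)$ is a coheir of $p_0\in S_*(M_0)$, then $q\in S_*(M)$ because ``$\tp(a/P^\cC)$ is definable over $P^{M_0}$''. But membership in $S_*(M)$ requires that $M\cup\{a\}$ be complete, i.e.\ (Fact~\ref{obs:complete_characterization}) that $\tp(a\bar m/P^\cC)$ be definable over $P^M$ for \emph{every} $\bar m\in M$, not just for $\bar m$ empty. Finite satisfiability in $M_0$ gives you nothing about joint types with new parameters $\bar m\in M\setminus M_0$. Concretely, if $(\exists z\in P)\psi(x,\bar m,z)\in q$, finite satisfiability only tells you that for each finite $D\subseteq P^M$ some $a_0\in M_0$ satisfies $\exists z\in P\,\psi(a_0,\bar m,z)\wedge\bigwedge_{d\in D}\neg\psi(a_0,\bar m,d)$; nothing forces a single $d\in P^M$ with $\psi(x,\bar m,d)\in q$. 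The paper avoids this entirely: rather than extending $M_0$ and then the type, it passes to a saturated model of $\Th(M_0\cup\{a\},Q)$ in the language expanded by a predicate $Q$ for $M_0$, and invokes Lemma~\ref{lem:complete_QE} (completeness depends only on the first-order theory of the set). This gives $(M',a')$ with $M'\cup\{a'\}$ complete in one stroke, and simultaneously makes $P^{M'}$ saturated, yielding (iv) directly.

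\medskip

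\textbf{The iteration for $(7)$ and $(8)$ is missing its engine.} Saying ``iterate the pair $\lam^+$ times'' hides the only nontrivial point: given a Vaughtian pair $(M_0,M_1)$ of \emph{saturated} models of size $\lam$ with $P^{M_0}=P^{M_1}$, how do you produce $M_2$ with $(M_1,M_2)$ again such a pair? The paper's answer is Theorem~\ref{thm:satiso}: $M_0\cong_P M_1$, so the isomorphism transports the pair. You also need to handle limit stages of cofinality $<\lam$, where the union is no longer saturated; the paper re-saturates using Fact~\ref{fact:satexist} (the union is complete with saturated $P$-part). Neither ingredient appears in your sketch.

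\medskip

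\textbf{Your ``main obstacle'' is not one.} The implications $(3)\Rightarrow(1)$ and $(5)\Rightarrow(1)$ are immediate from Observation~\ref{obs:complete}: if $(M,N)$ is a Vaughtian pair and $a\in N\setminus M$, then $P^N\subseteq M\cup\{a\}\subseteq N$, so $M\cup\{a\}$ is complete and $\tp(a/M)\in S_*(M)$ is non-algebraic. No $\acl$-closure bookkeeping is needed; models are automatically complete. For $(5)\Rightarrow(3)$, downward L\"owenheim--Skolem over $P^M$ gives $M'\prec M$ with $P^{M'}=P^M$ and $|M'|=|P^M|<|M|$, hence a Vaughtian pair.
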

\begin{proof}
Let $T$ be non-nulldimensional, and let $M, a$ witness this (that is, $\tp(a/M) \in S_*(A)$ non-algebraic. Let $(M',a') \equiv (M,a)$ saturated of cardinality $\lam$ (so we add a new predicate for $M$ and take a saturated model of the theory of $A = M\cup\set{a}$ in the expanded language). In particular, the reduct of this model to the original language satisfies that $P^{M'}$ is saturated, and $A' = M'\cup\set{a'}$ is complete, since $A' \equiv A$. By Fact \ref{fact:satexist} there is a model $M''$ with $P^{M''} = P^{M'}$ containing $a'$; so (i) $\implies$(iv). 

A  similar (and easier) argument shows that (iii) $\implies$ (iv). Obviously, (iv) $\implies$ (iii), (iv) $\implies$ (ii), and (ii) $\implies$ (i). This proves that (i) -- (iv) are equivalent. So far we have not used that there exists $\lam = \lam^{<\lam}$ (although the assumption that $T^P$ has saturated models will normally imply this condition anyway). 

 Clearly, (viii) implies (vi), (vii) implies both (vi) and (v), which, in turn, imply (i). We now prove that (i) implies (viii). As in the proof of (i) $\implies$ (iv), we can find a Vaughtian pair $M_0,M_1$ of models of cardinality $\lam$ with $M_0$ saturated. Moreover, since $\lam = \lam^{<\lam}$, we may assume that $M_1$ is also saturated. By Theorem \ref{thm:satiso}, $M_0, M_1$ are isomorphic over $P$. Hence there is $M_2$ saturated such that $(M_1, M_2)$ form a Vaughtian pair. This argument allows us to construct by induction a strictly increasing sequence \lseq{M}{i}{\lam^+} of saturated models with $P^{M_i} = P^{M_0}$ (for $\delta$ limit of cofinality $<\lam$, note that the union $M'_\delta = \cup_{i<\delta} M_i$ is still a model, hence a complete set with a saturated $P$-part; by Fact \ref{fact:satexist}, it can be extended to a saturated model $M_\delta$ without changing $P$). The union of this chain of models is the desired two-cardinal model. Note that a union of length $\lam^+$ of $\lam$-saturated models is itself $\lam$-saturated, as required.



\end{proof}

\begin{re}
 The argument for (i) $\implies$ (viii) is  a particular case of Chang's two-cardinal theorem (our assumptions make our lives easy; note that e.g., Hypothesis \ref{asm:1} is used implicitly in our proof).
\end{re}

\begin{re}\label{re:absolute}
 The notion of nulldimensionality is absolute. Moreover, since it is equivalent to not having a Vaughtian pair, by Discussion 1.2 in \cite{Sh234}, it is also equivalent to Shelah's notion of ``absolutely no two-cardinal model''. 
\end{re}

\begin{de}
\begin{enumerate}
\item 
A subclass $K$ of models of $T$ is called categorical over $P$ if  for every $M_1,M_2 \in K$,  $P^{M_1} = P^{M_2} \implies M_1 \cong_P M_2$. 
\item A subclass $K$ of models of $T$ is called absolutely categorical over $P$ if for every forcing extension $V[G]$, the class $K$ (as interpreted in $V[G]$) is categorical over $P$. 
\end{enumerate}
 
\end{de}

\begin{re}
 What we mean in clause (ii) above is really that $K$ is a template (a definition) of a class of models, not necessarily the class itself. For example, $K$ can be the class of saturated (or $|T|^+$-saturated) models of $T$; this class can have different interpretations in different models of set theory (for example, it is possible that $T$ has no saturated models in $V$, or that a model that appears to be saturated in $V$ stops being saturated in a forcing extension). 
\end{re}

\begin{co}\label{co:satcat}
Assume that for some cardinal $\lam>|T|$ we have $\lam = \lam^{<\lam}$. 
Let $K$ be the class of all models of $T$ with $P^M$ saturated of cardinality $\lam$ for some $\lam = \lam^{<\lam}$. 
Then TFAE:
\begin{enumerate}
\item $T$ is nulldimensional.
\item For $M\models T$  if $P^M$ is saturated of cardinality $\lam$, then so is $M$.
\item $K$ is  categorical over $P$.
\end{enumerate}

\end{co}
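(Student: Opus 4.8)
The plan is to prove Corollary \ref{co:satcat} by combining the equivalences in Theorem \ref{thm:nulldim} with the uniqueness of saturated models over $P$ established in Theorem \ref{thm:satiso}. All three implications are short given the machinery already developed, so the main work is just assembling the pieces in the right order and being careful about the $\lambda$-constructibility step.

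First, $(1) \implies (2)$: assume $T$ is nulldimensional, and let $M \models T$ with $P^M$ saturated of cardinality $\lam = \lam^{<\lam}$. The goal is to show $|M| = \lam$, which together with $\lam$-constructibility (Corollary following Theorem \ref{thm:satiso}) will give saturation; actually it is cleaner to argue directly: if $|M| > \lam$ then $|T| \le |P^M| < |M|$, so $M$ is a two-cardinal model of cardinality $>|T|$, contradicting nulldimensionality via Theorem \ref{thm:nulldim} (clause (5)). Hence $|M| = |P^M| = \lam$. Now since $M$ is $\lam$-constructible over $P^M$ (by the Corollary to Theorem \ref{thm:satiso}, as $P^M$ is saturated of cardinality $\lam$) and every $\lam$-constructible model over a $\lam$-saturated set of cardinality $\lam$ is $\lam$-saturated — this is the content of $\lam$-primariness, using that $\lam$-constructible $+$ $\lam$-saturated base yields $\lam$-saturated, which is exactly the combination packaged in the notion of $\lam$-primary — we get that $M$ is $\lam$-saturated, i.e., saturated of cardinality $\lam$. (Alternatively one invokes Theorem \ref{thm:satiso}(1): embed $M$ elementarily over $P$ into a genuinely saturated model $M^*$ of cardinality $\lam$; since $|M| = \lam = |M^*|$ and every type over a subset of size $<\lam$ realized in $M^*$ must be realized in $M$ — because otherwise one extends $M$ properly without changing $P$, again a Vaughtian pair — $M$ is itself saturated.)

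Next, $(2) \implies (3)$: let $M_1, M_2 \in K$ with $P^{M_1} = P^{M_2}$, where this common $P$-part is saturated of cardinality $\lam$ (with $\lam = \lam^{<\lam}$). By $(2)$, both $M_1$ and $M_2$ are saturated of cardinality $\lam$. By Theorem \ref{thm:satiso}(2), $M_1 \cong_P M_2$. Hence $K$ is categorical over $P$.

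Finally, $(3) \implies (1)$: I argue the contrapositive. Suppose $T$ is not nulldimensional. By Theorem \ref{thm:nulldim}, clause $(1) \iff (4)$, $T$ has a Vaughtian pair $(M, N)$ over $P$ with $P^M = P^N$ saturated of cardinality $\lam$. Then $M \neq N$, $M \elem N$, and $P^M = P^N$; in particular $M, N$ have the same $P$-part. To get a failure of categoricity of $K$, I want two members of $K$ with the same $P$-part that are not isomorphic over $P$. One clean way: by Theorem \ref{thm:nulldim} clause $(8)$, there is a two-cardinal $\lam$-saturated model $M^*$ of type $(\lam, \lam^+)$; its $P$-part is saturated of cardinality $\lam$ (being $\lam$-saturated of cardinality $\lam$), so $M^* \in K$, while $|M^*| = \lam^+$. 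On the other hand $P^{M^*}$, regarded as a complete set, extends (by Fact \ref{fact:satexist}) to a model $M^{**} \models T$ with $P^{M^{**}} = P^{M^*}$ and $|M^{**}| = \lam$ saturated, so $M^{**} \in K$. Since $|M^*| \neq |M^{**}|$ they cannot be isomorphic over $P$, so $K$ is not categorical over $P$. This completes the cycle of implications, and the main obstacle — which is genuinely minor here — is getting the cardinality bookkeeping in $(1)\implies(2)$ exactly right so that "$P^M$ saturated of size $\lam$" forces "$M$ saturated of size $\lam$" rather than merely "$M$ has a saturated elementary extension", which is where nulldimensionality (equivalently, no Vaughtian pair) does its real work.
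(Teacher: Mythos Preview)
Your primary argument for $(1)\Rightarrow(2)$ contains a genuine error. You claim that ``every $\lam$-constructible model over a $\lam$-saturated set of cardinality $\lam$ is $\lam$-saturated'' and attribute this to ``the content of $\lam$-primariness''. But Definition \ref{dfn:primary}(iii) is just a definition: $\lam$-primary \emph{means} $\lam$-constructible \emph{and} $\lam$-saturated; it is not a theorem that the first implies the second over a saturated base. Indeed this implication is false in general --- if $T$ is not nulldimensional, one can easily build a non-saturated $\lam$-constructible model over a saturated $P$-part. You also misapply the Corollary following Corollary \ref{co:modeliso}: it asserts that a \emph{saturated} $M$ is $\lam$-primary over $P^M$, so citing it for an arbitrary $M$ is circular. (It is true that any $M$ of size $\lam$ is $\lam$-constructible over $P^M$, by Corollary \ref{co:modeliso}(ii) directly, but this does not help.)

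Your parenthetical alternative, however, is correct and is exactly the paper's route: use Fact \ref{fact:satexist} to produce a genuinely saturated $M'$ of cardinality $\lam$ with $P^{M'}=P^M$, embed $M$ into $M'$ over $P$ via Theorem \ref{thm:satiso}(i), and observe that a proper image would yield a Vaughtian pair, contradicting nulldimensionality; hence the embedding is onto and $M$ is saturated. Your arguments for $(2)\Rightarrow(3)$ and $(3)\Rightarrow(1)$ are fine and agree with the paper (the paper invokes Theorem \ref{thm:nulldim}(vii) for the latter, and your use of clause (viii) together with Fact \ref{fact:satexist} spells out the same idea). So the proof stands once you drop the flawed primary argument and promote the alternative.
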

\begin{proof}
(i) $\implies$ (ii): Assume $T$ is nulldimensional, and let  $M \models T$ be such that $P^M$ is saturated of cardinality $\lam = \lam^{<\lam} > |T|$. By Fact \ref{fact:satexist}, there exists $M' \models T$ saturated of cardinality $\lam$ with $P^M = P^{M'}$. Note that since $T$ is nulldimensional, $|M| = \lam$. By Theorem \ref{thm:satiso}(i), $M$ can be elementarily embedded into $M'$ over $P$. By nulldemensionality, $M = M'$. 

(ii) $\implies$ (iii) by Theorem \ref{thm:satiso}(ii). 

(iii) $\implies$ (i) By Theorem \ref{thm:nulldim}(vii).

\end{proof}

Assuming  stability, a stronger characterization can be established. The following theorem shows that, assuming full stability, nulldimensionality can, in some sense, be seen as a certain analogue (over $P$) of unidimensionality in classical model theory (at least in relation to questions of categoricity). 


\begin{lem}\label{lem:nullthenpluscat}
 If the class of $T^+$-saturated models is absolutely categorical over $P$, then it is nulldimensional. 
\end{lem}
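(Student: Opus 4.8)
The plan is to argue by contraposition: assuming $T$ is not nulldimensional over $P$, I will exhibit — in a suitable forcing extension — two $|T|^+$-saturated models of $T$ with the same $P$-part but of different cardinalities, which contradicts the assumed absolute categoricity over $P$ of the class $K$ of $|T|^+$-saturated models of $T$.

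First I would move to a convenient forcing extension. By Remark \ref{re:absolute}, non-nulldimensionality of $T$ is equivalent to having a Vaughtian pair over $P$ and is therefore absolute, so $T$ remains non-nulldimensional in every forcing extension; and by hypothesis, in every forcing extension $V[G]$ the class $K$ (as interpreted there) is categorical over $P$. I would choose $V[G]$ so that $2^{|T|} = |T|^+$ holds in $V[G]$ — for instance by collapsing $2^{|T|}$ to $|T|^+$ with a forcing closed enough to add no new subsets of $|T|$ — and set $\lambda := (|T|^+)^{V[G]}$. Then $\lambda > |T|$ and $\lambda = \mu^+ = 2^\mu$ with $\mu = |T|$, so $\lambda^{<\lambda} = \lambda$. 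The rest of the argument takes place inside $V[G]$.

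Now, since $T$ is not nulldimensional and $\lambda = \lambda^{<\lambda} > |T|$, Theorem \ref{thm:nulldim} (the implication (i) $\Rightarrow$ (viii)) produces a two-cardinal $\lambda$-saturated model $M_1 \models T$ of type $(\lambda,\lambda^+)$, so $|M_1| = \lambda^+$, $|P^{M_1}| = \lambda$, and $M_1$ is $\lambda$-saturated, hence $|T|^+$-saturated because $\lambda \ge |T|^+$; thus $M_1 \in K$. A short verification shows $N := P^{M_1}$ is $\lambda$-saturated: any $T^P$-type over a subset $A$ of $N$ with $|A| < \lambda$, together with the formulas asserting that its variables lie in $P$, is a consistent type of $T$ over $A$ (it is realized in $\cC^P \subseteq \cC$, using Hypothesis \ref{asm:1}), hence is realized in the $\lambda$-saturated $M_1$, necessarily by elements of $P^{M_1} = N$. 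Therefore $N$ is a saturated model of $T^P$ of cardinality $\lambda = \lambda^{<\lambda}$, and — exactly as in the proof of Corollary \ref{co:satcat} — Fact \ref{fact:satexist} yields a saturated model $M_0 \models T$ of cardinality $\lambda$ with $P^{M_0} = N$; since $\lambda \ge |T|^+$, also $M_0 \in K$. But now $M_0, M_1 \in K$ have the common $P$-part $N$ while $|M_0| = \lambda < \lambda^+ = |M_1|$, so they are not isomorphic, let alone isomorphic over $P$ — contradicting that $K$ is categorical over $P$ in $V[G]$. Hence $T$ is nulldimensional.

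I do not anticipate a genuine difficulty here: the substance of the argument is supplied entirely by Theorem \ref{thm:nulldim} and Fact \ref{fact:satexist}. The points that need a little care are (a) the forcing bookkeeping — checking that both non-nulldimensionality and the absolute-categoricity hypothesis descend to $V[G]$, and that the two models produced there are $|T|^+$-saturated relative to the $|T|^+$ computed in $V[G]$ — and (b) the auxiliary observation that the $P$-part of the $\lambda$-saturated two-cardinal model is again saturated, which is what allows Fact \ref{fact:satexist} to be invoked for $M_0$.
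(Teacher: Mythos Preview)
Your proof is correct and follows essentially the same route as the paper: pass to a forcing extension where some $\lambda=\lambda^{<\lambda}>|T|$ exists, invoke Theorem~\ref{thm:nulldim}(viii) to obtain a $\lambda$-saturated two-cardinal model, and combine with Fact~\ref{fact:satexist} to produce two $|T|^+$-saturated models with the same $P$-part but different cardinalities. The only cosmetic difference is your choice of $\lambda$: you force $2^{|T|}=|T|^+$ and take $\lambda=|T|^+$, whereas the paper forces $2^\mu=\mu^+$ for some $\mu>|T|^+$ (so that no sets of size $\le |T|^+$ are disturbed) and takes $\lambda=\mu^+$; both choices work, and your verification that $P^{M_1}$ is saturated (needed for Fact~\ref{fact:satexist}) is a detail the paper leaves implicit.
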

\begin{proof}
 Working in some forcing extension that does not change sets of cardinality $\le |T|^+$, but where there is $\mu > |T|^+$ with $2^{\mu} = \mu^+$, we get that $\lam = \mu^+$ satisfies the assumption in Theorem \ref{thm:nulldim}. If $T$ is not nulldimensional, clause (viii) of Theorem \ref{thm:nulldim} (together with Fact \ref{fact:satexist}) implies that that there are two $\lam$-saturated models with the same $P$-part of different cardinalities; so, since $\lam > |T|$,  the class  of $|T|^+$-saturated models of $T$ is not absolutely categorical.

\end{proof}

\begin{theorem}
 Let $T$ be fully stable. Then $T$ is nulldimensional if and only if the class of $|T|^+$-saturated models of $T$ is absolutely categorical over $P$.
\end{theorem}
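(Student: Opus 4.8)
The direction $(\Leftarrow)$ is nothing new: it is precisely Lemma \ref{lem:nullthenpluscat}, and it does not even use full stability. So the plan is to prove $(\Rightarrow)$. First I would get rid of the word ``absolutely'': nulldimensionality is absolute by Remark \ref{re:absolute}, and full stability is a forcing-absolute property of $T$, since by Theorem \ref{thm:stablerank} it is equivalent to a bound on a rank computed from finitary data (for each finite $\Delta_1,n$ one asks for finite $\Delta_2,m$ with $R^n_A(\bar x = \bar x,\Delta_1,\Delta_2,2) \le m$, while $A$ ranges over sets whose completeness depends only on their first-order theory). Hence it suffices to show, working in a fixed universe: if $T$ is fully stable and nulldimensional, then any two $|T|^+$-saturated $M_1,M_2 \models T$ with $P^{M_1} = P^{M_2} =: N$ are isomorphic over $P$.

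Next I would record two facts about $N$. (a) $N$ is $|T|^+$-saturated, hence $|N| \ge |T|^+$: by Observation \ref{obs:complete}, $N = P^{M_i} \elem P^\cC$, so by Hypothesis \ref{asm:1} a $T^P$-type over a subset $B \subseteq N$ with $|B| \le |T|$, read as a $T$-type containing $P(x)$, is realized in the $|T|^+$-saturated $M_i$, necessarily inside $P^{M_i} = N$. (b) $|M_1| = |M_2| = |N| =: \lambda$: if some $M_i$ had cardinality $>|N|$, then (as $|N| \ge |T|$) downward L\"owenheim--Skolem over $N$ would yield $M' \prec M_i$ with $N \subseteq M'$, $M' \ne M_i$ and $P^{M'} = N$, i.e.\ a Vaughtian pair over $P$, contradicting nulldimensionality (Theorem \ref{thm:nulldim}).

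The heart of the argument uses full stability exactly once. Being fully stable, $T$ is in particular fully stable over $N$; and $N$ is complete (Observation \ref{obs:complete}), so Theorem \ref{th:primary} --- for uncountable $T$, its routine $|T|$-analogue --- provides $M_0$ with $N \subseteq M_0$, $P^{M_0} = N$, $|M_0| \le |N| + \aleph_0 = \lambda$, and $M_0$ locally constructible over $N$, say $M_0 = N \cup \{d_i : i < \delta\}$ with each $\tp(d_i / N \cup \{d_j : j < i\})$ locally isolated. The key point is that such a local construction is portable: a locally isolated type is $|T|^+$-isolated (Observation \ref{obs:localimpliestplus}), hence it can be copied inside any $|T|^+$-saturated model with $P$-part $N$. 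Concretely, I would build an elementary embedding $g \colon M_0 \hookrightarrow M_1$ with $g \rest N = id$ by recursion: at stage $i$ restrict $\tp(d_i / N \cup \{d_j : j<i\})$ to an isolating subset $r_i$ involving $\le |T|$ parameters, observe that $g(r_i)$ is consistent over a $\le|T|$-sized subset of $M_1$, realize it by $|T|^+$-saturation, and let $g(d_i)$ be a realization; as $r_i$ implies the full type, $g$ stays partial elementary, and one takes unions at limit stages. The resulting $g$ fixes $N = P^{M_0} = P^{M_1}$, so it is over $P$; and if $g(M_0) \ne M_1$, then $(g(M_0), M_1)$ is a Vaughtian pair over $P$, impossible by nulldimensionality. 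Hence $g \colon M_0 \cong_P M_1$, and symmetrically $M_0 \cong_P M_2$, so $M_1 \cong_P M_2$.

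The one genuinely delicate point I anticipate is the absoluteness bookkeeping in the first paragraph --- being careful that full stability really transfers to all forcing extensions via the rank characterization of Theorem \ref{thm:stablerank} --- together with, for uncountable $T$, supplying the $|T|$-version of Theorem \ref{th:primary} yielding a locally constructible model over $N$. Everything else is routine once one notices that locally isolated types are $|T|^+$-isolated, so that the local construction of $M_0$ transfers verbatim into each $M_i$, and then nulldimensionality (no Vaughtian pairs over $P$) upgrades the resulting embeddings to isomorphisms over $P$.
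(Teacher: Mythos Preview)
Your argument is correct in outline and follows the same overall strategy as the paper: produce a ``small'' model over $N = P^{M_i}$ that is constructible enough to embed into any $|T|^+$-saturated model with $P$-part $N$, then use nulldimensionality (no Vaughtian pairs) to force the embedding to be onto. The difference lies in where the small model comes from. The paper does not appeal to Theorem \ref{th:primary} at all; instead it quotes an external result (Theorem 4.6 of \cite{usvyatsov2024stability}) which, for any fully stable $M$ with $P^M$ $|T|^+$-saturated, produces a $|T|^+$-primary $M_0 \elem M$ over $P^M$. Nulldimensionality then gives $M_0 = M$ directly, so each $|T|^+$-saturated model is itself $|T|^+$-primary over its $P$-part, and categoricity follows immediately from primality. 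This route works uniformly for all $|T|$ and sidesteps both your separate construction of $M_0$ and the embedding-by-recursion step.

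Your version, by contrast, is more self-contained --- it stays within the present paper --- but at the cost of relying on Theorem \ref{th:primary}, which is stated and proved only for countable $T$. You flag the uncountable case as ``routine'', but be aware that this is not entirely free: the inductive construction in Lemma \ref{lem:li}(ii) keeps the accumulated type finite at every stage precisely because the enumeration has length $\omega$, and both Lemma \ref{lem:li}(i) and Lemma \ref{lem:rankeven} are stated for finite partial types. Pushing this through for $|T|$ uncountable requires handling infinite partial types at limit stages (via Fact \ref{rank} to pass to finite approximations of the same rank), which is doable but not quite a one-liner. On the other hand, your explicit discussion of why full stability and nulldimensionality are forcing-absolute is a point the paper leaves implicit (it simply works ``in $V[G]$'' and applies the external theorem there), so in that respect your write-up is more careful.
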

\begin{proof}
 Assume $T$ is nulldimensional, and let $M \models T$ be $|T|^+$-saturated (potentially in some forcing extension $V[G]$). By Theorem 4.6 in \cite{usvyatsov2024stability}, there is (in $V[G]$) $M_0 \elem M$, $P^{M_0} = P^M$, $M_0$ is $|T|^+$-primary over $P^M$. Since $T$ is nulldimensional, $M_0 = M$. Hence, if $M' \models T$ (again, in $V[G]$) is $|T|^+$-saturated with $P^{M'} = P^M$, there is an elementary embedding $f$ of $M$ into $M'$ over $P$. Again, by nulldimensionaly, the image of $f$ is $M'$, so it is an isomorphism over $P^M$. 
 
The other direction is Lemma  \ref{lem:nullthenpluscat} (and does not use stability). 
 
\end{proof}


\begin{re}
 In an upcoming preprint \cite{Us25}, the author improves the result above, replacing categoricity for the class of $|T|^+$-saturated models with categoricity over a $|T|^+$-saturated predicate.  
\end{re}

Using Shelah's results from \cite{Sh234}, one can weaken the stability assumptions for countable theories. Again, the reader is referred to \cite{Sh234} or \cite{Alz-thesis} for definitions of the relevant notions and further details.

\begin{theorem}
 Let $T$ be countable, and assume that every u.l.a. $\mathcal P^-(n)$-system in $T$ is stable over $P$. Then $T$ is nulldimensional if and only if the class 
os $\aleph_1$-saturated models of $T$ is absolutely categorical.
\end{theorem}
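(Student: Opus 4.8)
The plan is to mimic the proof of the preceding theorem (the fully stable case), replacing the two inputs that came from full stability with their analogues available under the weaker hypothesis that every u.l.a.\ $\mathcal P^-(n)$-system is stable over $P$. Recall that in that proof we used two facts: first, that nulldimensionality implies every $\aleph_1$-saturated ($=|T|^+$-saturated, since $T$ is countable) model $M$ coincides with an $\aleph_1$-primary model $M_0$ over $P^M$; and second, the back-and-forth/embedding machinery (Theorem~\ref{thm:satiso}), which is purely general and requires no stability. So the only thing to supply is: \emph{if $T$ is countable and every u.l.a.\ $\mathcal P^-(n)$-system is stable over $P$, then over any $\aleph_1$-saturated (equivalently, over any model, since $P^M$ is then an $\aleph_1$-saturated model of $T^P$, which is a $0$-dimensional u.l.a.\ system) one can build an $\aleph_1$-primary model}. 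This is exactly what Shelah establishes in \cite{Sh234} (the existence property, indeed existence of $\aleph_1$-primary models, over u.l.a.\ $\mathcal P^-(n)$-systems; see also \cite{Alz-thesis}), so I would cite it directly rather than reprove it.

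Concretely, for the forward direction I would argue: assume $T$ is nulldimensional, and let $M \models T$ be $\aleph_1$-saturated, possibly in a forcing extension $V[G]$ (this changes nothing, as $T$ is countable and the hypothesis on u.l.a.\ systems is about finite combinatorial data and types, which are absolute). Then $P^M \models T^P$ is $\aleph_1$-saturated, so it is (the universe of) a $0$-dimensional u.l.a.\ $\mathcal P^-(1)$-system, and by Shelah's existence theorem there is an $\aleph_1$-primary model $M_0 \models T$ over $P^M$ with $P^{M_0} = P^M$; we may take $M_0 \elem M$ by realizing the construction inside $M$ using saturation. Since $T$ is nulldimensional, no type over $M_0$ in $S_*(M_0)$ is non-algebraic, so $M_0 = M$ (any element of $M\setminus M_0$ would realize a non-algebraic element of $S_*(M_0)$, using $P^{M_0}=P^M$ and completeness of $M_0$ as a subset of $\cC$). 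Now given another $\aleph_1$-saturated $M' \models T$ with $P^{M'} = P^M$, the same argument gives $M'$ $\aleph_1$-primary over $P^M$, and by $\aleph_1$-primeness (atomicity) there is an elementary embedding $f\colon M \hookrightarrow M'$ over $P$; by nulldimensionality applied to $M'$ (the image of $f$ is a model containing $P^{M'}$, and $M'$ has no new dimensions over it) $f$ is onto, so $M \cong_P M'$. Hence the class of $\aleph_1$-saturated models is categorical over $P$ in every forcing extension, i.e.\ absolutely categorical.

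The reverse direction does not use the stability hypothesis at all and is already handled by Lemma~\ref{lem:nullthenpluscat} (with $|T|^+ = \aleph_1$): if the class of $\aleph_1$-saturated models is absolutely categorical over $P$, pass to a forcing extension with a suitable $\lam = \lam^{<\lam} > \aleph_1$ and apply Theorem~\ref{thm:nulldim}(viii); a two-cardinal $\lam$-saturated model would give two $\aleph_1$-saturated models with the same $P$-part of different cardinalities, contradicting categoricity. So the whole proof reduces to: (forward) cite Shelah's $\aleph_1$-primary existence over u.l.a.\ systems, then run the nulldimensionality collapse and the back-and-forth exactly as in the fully stable theorem; (backward) quote Lemma~\ref{lem:nullthenpluscat}.

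The main obstacle I anticipate is purely bibliographic/expository rather than mathematical: making sure the statement of Shelah's theorem that I invoke is precisely ``there is an $\aleph_1$-primary model over every u.l.a.\ $\mathcal P^-(n)$-system'' and not merely ``the existence property holds'', since I need $\aleph_1$-\emph{primeness} (local/$\aleph_1$-atomicity) to get the embedding $f$ in the categoricity step, not just the existence of some model with the right $P$-part. This is indeed what \cite{Sh234} proves (and what \cite{Alz-thesis} exposits in detail), and it is also the $n=1$ shadow of the argument already used here for the fully stable case via \cite{usvyatsov2024stability}, so no new idea is needed — one just has to phrase the citation carefully. A secondary point to be careful about is the absoluteness of the u.l.a.-stability hypothesis across forcing extensions; since $T$ is countable and a u.l.a.\ $\mathcal P^-(n)$-system's instability is witnessed by a tree of consistent formulas (coding $>|A|^{|T|}$ types), this is absolute in the same way nulldimensionality is (Remark~\ref{re:absolute}), so the forward direction genuinely works in every $V[G]$.
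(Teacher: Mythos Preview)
Your proposal is correct and follows essentially the same route as the paper: cite Shelah's construction result from \cite{Sh234} to obtain a suitably constructible model over $P^M$, embed it into any $\aleph_1$-saturated $M$ via the construction, collapse by nulldimensionality, and quote Lemma~\ref{lem:nullthenpluscat} for the converse. The only refinement the paper makes over your sketch is bibliographic precision on exactly the point you flagged as an obstacle: rather than requiring an $\aleph_1$-\emph{primary} model, the paper cites Theorem~6.2 of \cite{Sh234} (and \cite{Alz-thesis}) for a \emph{locally constructible} model $M_0$ over $N\models T^P$, and then observes via Observation~\ref{obs:localimpliestplus} that locally constructible implies $\aleph_1$-constructible, which is already enough to embed $M_0$ into any $\aleph_1$-saturated model over $P$ --- so saturation of $M_0$ itself is never needed.
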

\begin{proof}
By Theorem 6.2 in \cite{Sh234} (see also Theorems 3.42 and 8.7 in \cite{Alz-thesis}), if $T$ is countable and nulldimensional, then over every $N\models T^P$ there is a locally constructible model $M_0 \models T$. In particular, such $M_0$ is $\aleph_1$-constructible over $N$ (see definitions in section 6 and Observation \ref{obs:localimpliestplus}), therefore can be elementarily embedded over $N$ into any $|T|^+$-saturated model of $T$. So one can repeat the proof of the previous Theorem using this $M_0$ instead of the primary model used there. 
\end{proof}

\medskip


\begin{lem}
 \begin{enumerate}
\item If $T$ satisfies Hypothesis \ref{asm:1}, then $N$ is stable over $P$ for every $N \prec \cC^P$. 
\item If $T$ is absolutely categorical over $P$, then $T$ is nulldimensional. 
\item If $T$ is countable and categorical over $P$, then it is nulldimensional. 
\item If $T$ is nulldimensional, then  $M$ is stable over $P$ for every $M \prec \cC$.
\item If some (equivalently, every) $M \prec \cC$ is unstable over $P$, then there is a forcing extension $V[G]$ and a saturated $N\models T^P$ of cardinality $\lam = \lam^{<\lam} > |T|$, such that
there are $2^\lam$ models $M_i \models T$ pairwise non-isomorphic over $P$ with $P^{M_i} = N$. 

In particular, $T$ is not absolutely categorical over $P$ (but in this case, this follows already from clauses \emph{(ii)} and \emph{(iv)}).
\item Same conclusion if $T$ is countable, nulldimensional, and there is an unstable u.l.a. $\mathcal P^-(n)$-system (in particular, $T$ is not absolutely categorical over $P$).
\end{enumerate}

\end{lem}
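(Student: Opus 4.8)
The plan is to treat the six clauses essentially independently, reducing (i)--(iv) to material already in the paper and pointing (v)--(vi) at Shelah's non-structure machinery from \cite{Sh234}.

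For (i), I would first check that every $N\prec\cC^P$ is a complete set: given a formula $\psi(\bar x,\bar y)$ and $\bar b\in N\subseteq P$, Hypothesis \ref{asm:1}(ii) makes the $0$-definable (in $\cC$) relation ``$\bar x,\bar y\in P$ and $\psi(\bar x,\bar y)$'' already $0$-definable in $\cC^P$, so the truth of $\exists\bar x\in P\,\psi(\bar x,\bar b)$ in $\cC$ is reflected inside $N$ by elementarity -- which is exactly completeness. Since completeness depends only on the elementary type and $N\models\forall x\,P(x)$, every $N'\equiv N$ likewise satisfies $N'\prec\cC^P$. Now if $\tp(\bar c/N')\in S_*(N')$ then $N'\bar c$ is complete, so by Fact \ref{obs:complete_characterization} the type $\tp(\bar c/P^\cC)$ is definable over $N'$; as $N'\subseteq P^\cC$, that definable extension determines $\tp(\bar c/N')$. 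Hence $|S_*(N')|$ is at most the number of defining schemata over $N'$, namely $\le|N'|^{|T|}$, so $N$ is stable over $P$.

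For (ii)--(iv): if $T$ is not nulldimensional then, nulldimensionality being absolute (Remark \ref{re:absolute}), this persists into a forcing extension $V[G]$ in which some $\lambda=\lambda^{<\lambda}>|T|$ exists (a routine forcing). There, Theorem \ref{thm:nulldim} yields a two-cardinal model of type $(\lambda,\lambda^+)$ whose $P$-part $N$ is saturated, while Fact \ref{fact:satexist} and downward L\"owenheim--Skolem (keeping all of $N$) yield a model of cardinality $\lambda$ with $P$-part $N$; these are not isomorphic over $P$, so $T$ is not categorical over $P$ in $V[G]$, hence not absolutely categorical -- this is (ii). For (iii), if $T$ is countable and categorical over $P$ then (as recorded after Definition \ref{de:2card}) it has no two-cardinal model, whereas a Vaughtian pair over $P$ would -- by the classical two-cardinal theorem for countable theories applied to $P(x)$, using that $P$ is infinite -- produce a model of type $(\aleph_0,\aleph_1)$; so $T$ has no Vaughtian pair over $P$, which by Remark \ref{re:absolute} is nulldimensionality. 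For (iv), given $M\prec\cC$, any $M'\equiv M$ is a model of $T$, so nulldimensionality forces every $p\in S_*(M')$ to be algebraic; hence $|S_*(M')|$ is bounded by the number of small tuples from $M'$, i.e.\ $\le|M'|^{|T|}$, and since this holds for all $M'\equiv M$, the set $M$ is stable over $P$.

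For (v)--(vi), note first that ``some $M\prec\cC$ is unstable over $P$'' is equivalent to ``every $M\prec\cC$ is'', since all such $M$ realize $\mathrm{Th}(\cC)$ and stability over $P$ is an invariant of the elementary type. For (v), pass (after forcing if necessary to secure $\lambda=\lambda^{<\lambda}>|T|$) to a saturated $M'\equiv M$ of cardinality $\lambda$; instability and Fact \ref{fct:satstable} give $|S_*(M')|=2^{\lambda}$, and $N:=P^{M'}$ is saturated of cardinality $\lambda$ (equivalently, via the rank of Theorem \ref{thm:stablerank} one obtains a binary tree of $*$-types over $M'$ of height $\lambda$). The remaining -- and main -- task is to turn this into $2^{\lambda}$ models over $N$ pairwise non-isomorphic over $P$: one codes subsets of $\lambda$ (or pairwise non-embeddable linear orders of size $\lambda$) into complete sets with $P$-part $N$ using the unstable behaviour, extends each to a model of $T$ over $N$ by Fact \ref{fact:satexist}, and recovers the code from the resulting model up to bounded ambiguity. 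This recovery step is the genuine obstacle; it is the ``over $P$'' analogue of the classical passage from instability to maximally many models, and I expect it to follow the pattern of Shelah's consistent non-structure arguments in \cite{Sh234}. The last sentence of (v) is then immediate, and -- as the statement notes -- already follows from (ii) and (iv), since an unstable $M\prec\cC$ makes $T$ non-nulldimensional by (iv), hence not absolutely categorical by (ii). Clause (vi) is proved the same way, feeding the given unstable u.l.a.\ $\mathcal P^-(n)$-system -- in a countable, nulldimensional $T$, i.e.\ exactly Shelah's setting -- into his consistent non-structure theorem, built over a saturated $N\models T^P$ of cardinality $\lambda$; here, $T$ being nulldimensional, the failure of absolute categoricity is a genuinely new conclusion rather than a restatement of (ii) and (iv).
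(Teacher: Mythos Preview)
Your proposal is correct and follows essentially the same route as the paper: (i) via definability of $*$-types giving the counting bound, (ii)--(iii) via the two-cardinal/Vaughtian-pair criterion, (iv) by observing that instability forces a non-algebraic $*$-type, and (v)--(vi) by deferring to the non-structure results in \cite{Sh234}. The paper's proof is far terser (one-liners for (ii)--(iv), bare citations for (v)--(vi)); the only point it adds beyond your argument is an alternative for (i) via Fact \ref{fact:satexist} and Theorem \ref{thm:satiso}(ii), but its main argument for (i) is exactly your counting of defining schemata.
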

\begin{proof}
 \begin{enumerate}
\item 
Follows almost immediately from Fact \ref{fact:satexist} and Theorem \ref{thm:satiso}(ii). But there is also a straightforward counting argument: if $N \prec \cC^P$ saturated, let $p \in S_*(N)$, and let $a \models p$. Since $p \in S_*(N)$, the set $B = N \cup \set{a}$ is complete. So $\tp(a/P^\cC)$ is definable over $P^B = N$. In particular, $p$ is definable over $N$. So all types in $S_*(N)$ are definable over $N$, hence there are at most $|N|^|T|$ such types. 
%
\item Easy (use the ``no two-cardinal model'' criterion).
\item Same as previous clause. 
\item If some model $M$ of $T$ is unstable, then for some model $M'$ of $T$, we have $|S_*(M')|>|M'|$. In particular, there is a non-algebraic type in $S_*(M')$.

%


\item See section 2 of \cite{Sh234}.

\item Scattered through \cite{Sh234} (section 2 for $n=2$, section 4 for $n=3$, and section 9 for $n>3$). 

\end{enumerate}

\end{proof}

Let us  summarize what we know about the connections between nulldimensionality, stability, and categoricity. 
\begin{co}\label{co:nullcatequiv}
 \begin{enumerate}
\item If $T$ is categorical over $P$, then $T$ is nulldimensional, every model of $T^P$, and every model of $T$ are stable over $P$.  
\item If $T$ is absolutely categorical over $P$, then $T$ is nulldimensional, and every u.l.a. $\mathcal P^-(n)$-system is stable over $P$ (but see the Discussion below).  
\item If $T$ is nulldimensional and fully stable, then the class of $|T|^+$-saturated models $M$ of $T$ 
is absolutely categorical over $P$. 
\item If $T$ is countable, nulldimensional, and every u.l.a. $\mathcal P^-(n)$-system is stable over $P$, then the class of $\aleph_1$-saturated models of $T$ is absolutely categorical over $P$.
\item If the class of $|T|^+$-saturated models of $T$ 
is absolutely categorical over $P$, then $T$ is nulldimensional, every model of $T^P$, and every model of $T$ are stable over $P$.  
\item If $T$ is countable and the class of $\aleph_1$-saturated models is absolutely categorical, then $T$ is nulldimensional, and every u.l.a. $\mathcal P^-(n)$-system is stable over $P$ (at least for $n\le 3$).  
\end{enumerate}

\end{co}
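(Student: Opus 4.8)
The plan is to dispatch the six clauses one by one, in each case merely assembling results already established in this section; no genuinely new model-theoretic input is needed. It is convenient to group the clauses: (3) and (4) derive absolute categoricity of a class of saturated models from nulldimensionality plus a stability hypothesis, while (1), (2), (5), (6) go in the opposite direction, deriving nulldimensionality (and stability) from some form of categoricity over $P$. Clauses (3) and (4) require nothing beyond a citation: clause (3) is exactly the forward implication of the theorem that, for fully stable $T$, nulldimensionality is equivalent to absolute categoricity of the class of $|T|^+$-saturated models; and clause (4) is the forward implication of its countable analogue, under stability of all u.l.a.\ $\mathcal{P}^-(n)$-systems.

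For clauses (1) and (5) I would first extract nulldimensionality and then get stability essentially for free. In clause (5), nulldimensionality is precisely Lemma \ref{lem:nullthenpluscat}. In clause (1), categoricity over $P$ excludes two-cardinal models: by Definition \ref{de:2card} a two-cardinal model has type $(\kappa,\lambda)$ with $|T|\le\kappa<\lambda$, and from a Vaughtian pair over $P$ one produces (by the two-cardinal transfer of Theorem \ref{thm:nulldim}, i.e.\ the relevant case of Chang's theorem in our setting) models of unboundedly many cardinalities sharing a fixed $P$-part, contradicting categoricity over $P$; hence $T$ has no Vaughtian pair over $P$, so by Remark \ref{re:absolute} it is nulldimensional (for countable $T$ this is just the preceding Lemma). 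Once $T$ is nulldimensional, the preceding Lemma gives that every $M\prec\cC$ is stable over $P$, and that every $N\prec\cC^P$ is stable over $P$ already from Hypothesis \ref{asm:1}; since stability over $P$ of a set depends only on its elementary theory, ``every model of $T$'' and ``every model of $T^P$'' follow.

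For clauses (2) and (6), nulldimensionality comes first again — from the preceding Lemma (``absolutely categorical over $P$ implies nulldimensional'') in clause (2), and from Lemma \ref{lem:nullthenpluscat} (using $|T|^+=\aleph_1$ for countable $T$) in clause (6). The conclusion that every u.l.a.\ $\mathcal{P}^-(n)$-system is stable over $P$ is then the contrapositive of Shelah's consistent non-structure results, as packaged in the preceding Lemma: a countable, nulldimensional $T$ carrying an unstable u.l.a.\ $\mathcal{P}^-(n)$-system has a forcing extension with $2^\lambda$ pairwise non-$P$-isomorphic models of a fixed saturated $P$-part, hence is not absolutely categorical over $P$. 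This is exactly why clause (6) carries the caveat ``at least for $n\le 3$'' (the argument is written out in \cite{Sh234} for $n=2,3$, with $n>3$ only sketched in its section 9), and why clause (2) carries the hedge ``but see the Discussion below'': for uncountable $T$ there is at present no corresponding non-structure theory, so the stability-of-systems conclusion of (2) is only complete in the countable case.

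The one step that is not pure bookkeeping is the passage in clause (1), for $T$ not assumed countable, from categoricity over $P$ to the absence of a Vaughtian pair over $P$: one must run the two-cardinal transfer argument (equivalently, invoke the appropriate form of Chang's two-cardinal theorem in the present framework, exactly as in the proof of Theorem \ref{thm:nulldim}) to obtain models of arbitrarily large cardinality with the same $P$-part. This is standard but is the place where a short argument rather than a citation is needed; everything else in the Corollary is a direct appeal to the theorems and lemmas of this section.
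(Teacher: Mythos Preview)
Your proposal is correct and follows the same strategy as the paper, which dispatches the Corollary in one sentence by pointing back to the immediately preceding Lemma and the two theorems characterizing nulldimensionality via absolute categoricity of saturated models; your write-up simply unpacks those citations clause by clause.

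One small correction of emphasis: your reading of the hedge ``but see the Discussion below'' in clause~(ii) is off. The Discussion that follows the Corollary is not about uncountable $T$; it is about the fact that Shelah's non-structure theorem for unstable u.l.a.\ $\mathcal P^-(n)$-systems is fully written out in \cite{Sh234} only for $n\le 3$ (and, for clause~(vi), additionally about whether the many models produced can be taken $\lambda$-saturated). So the caveat in (ii) is the same $n>3$ incompleteness that forces the parenthetical ``at least for $n\le 3$'' in (vi), not a countability issue. That said, your observation that Lemma~(vi) of the preceding Lemma is stated only for countable $T$ is also a genuine limitation, just not the one the paper is flagging. Your handling of clause~(i) via the two-cardinal transfer is fine, though note that the simpler route is: non-nulldimensional $\Leftrightarrow$ Vaughtian pair (Remark~\ref{re:absolute}), and a Vaughtian pair yields a two-cardinal model (Theorem~\ref{thm:nulldim}(v)), which by downward L\"owenheim--Skolem immediately gives two models of different cardinalities with the same $P$-part.
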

\begin{proof}
 The only statement that was not  stated explicitly earlier is clause (vi), which, again, follows from Shelah's non-structure theorems (see the discussion below).
\end{proof}

\begin{dsc}
 Some clarification is in order regarding the last clause of Corollary \ref{co:nullcatequiv}. In Theorem 4.3 of \cite{Sh234}, it is explicitly stated and proven, that, if $n=3$, and there is an unstable $\mathcal P^-(3)$-system, then one can force the existence of ``many'' $\lam$-saturated models (for some $\lam>|T|$) non-isomorphic over $P$. However, in Theorem 9.2 in \cite{Sh234}, where the non-structure result is stated for an arbitrary $n$, saturation is not mentioned explicitly, and the proof is very much incomplete. So really in clause (vi) above only the cases for $n \le 3$ are fully proven in \cite{Sh234}. However, we believe that the proof of \cite[9.2]{Sh234} does indeed go through as claimed, and that it, too, gives non-structure for the class of somewhat saturated models. We shall state this as a conjecture below. 
\end{dsc}

\begin{con}
 Let $T$ be countable; then the class of $\aleph_1$-saturated models of $T$ 
is absolutely categorical over $P$ if and only if $T$ is nulldimensional, and every u.l.a. $\mathcal P^-(n)$-system is stable over $P$.
\end{con}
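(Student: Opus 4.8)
The plan is to treat the two implications separately. The backward implication --- that ``$T$ nulldimensional and every u.l.a.\ $\mathcal P^-(n)$-system stable over $P$'' implies ``the class of $\aleph_1$-saturated models of $T$ is absolutely categorical over $P$'' --- is exactly the theorem stated immediately before this conjecture (Shelah's structure side, \cite[6.2]{Sh234}, see also \cite[3.42,\,8.7]{Alz-thesis}): under the stability-of-systems hypothesis, nulldimensionality is equivalent to absolute categoricity of the $\aleph_1$-saturated class, so the forward part of that equivalence delivers precisely what is needed, with nothing new to prove.

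For the forward implication, assume that the class of $\aleph_1$-saturated models of $T$ is absolutely categorical over $P$; we must establish (a) that $T$ is nulldimensional, and (b) that every u.l.a.\ $\mathcal P^-(n)$-system in $T$ is stable over $P$. Part (a) is immediate from Lemma \ref{lem:nullthenpluscat}: since $T$ is countable, $|T|^+=\aleph_1$, so that lemma applies verbatim. (Recall its proof: pass to a forcing extension in which some $\lam=\mu^+=2^\mu>\aleph_1$ exists; were $T$ not nulldimensional, Theorem \ref{thm:nulldim}(viii) together with Fact \ref{fact:satexist} would produce two $\lam$-saturated --- hence $\aleph_1$-saturated --- models with equal $P$-part but different cardinalities.) Thus the genuine content of the conjecture is part (b), which I would attack by contraposition.

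So suppose some u.l.a.\ $\mathcal P^-(n)$-system $\mathbf s$ in $T$ is \emph{unstable} over $P$. The target is: in a forcing extension $V[G]$ preserving cardinals up to some $\lam=\lam^{<\lam}>\aleph_0$ (and with, say, $2^\lam=\lam^+$ for bookkeeping), there is a family $\set{M_i:i<2^\lam}$ of $\lam$-saturated models of $T$, pairwise non-isomorphic over $P$, all with the same saturated $P$-part of cardinality $\lam$. Since $\lam$-saturated models are $\aleph_1$-saturated, such a family refutes absolute categoricity of the $\aleph_1$-saturated class. For $n=2$ this is the construction of \cite[\S2]{Sh234}, and for $n=3$ it is \cite[4.3]{Sh234}, where the produced models are explicitly $\lam$-saturated; for general $n$ the relevant statement is \cite[9.2]{Sh234}, whose published proof is incomplete and does not address saturation. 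So the real task is to complete \cite[9.2]{Sh234} and to run it with saturated building blocks. I would follow the $n=3$ template: index $2^\lam$ branches $\eta$ of a tree; for each, build $M_\eta=\bigcup_{\xi<\lam^+}M_\eta^\xi$ as a continuous increasing chain of $\lam$-saturated models with fixed $P$-part $N$ (saturated, $|N|=\lam$); at successor steps amalgamate a fresh copy of $\mathbf s$ over $M_\eta^\xi$ using the $(n-1)$-dimensional amalgamation coming from u.l.a.-ness, coding one bit of $\eta$ into a dimension/non-forking invariant of $M_\eta$ over $N$; at limit steps of cofinality $<\lam$, re-saturate the union via Fact \ref{fact:satexist} without disturbing $N$ or the accumulated invariant, exactly as in \cite[4.3]{Sh234}. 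One then argues that the invariant is preserved by isomorphisms over $P$, so distinct branches give non-$P$-isomorphic models, and counts $2^\lam$ of them.

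The delicate points are entirely on the structure-theory side, which is why the statement is still a conjecture: (i) verifying that u.l.a.-ness of $\mathbf s$ and its instability propagate through the higher $(n-1)$-dimensional amalgamation, so that the invariant genuinely has $2^\lam$ possible values; and (ii) checking that re-saturation at limits does not collapse that invariant. Both are routine for $n\le 3$, but for larger $n$ they require the full apparatus of $n$-dimensional independent systems of models over $P$, which is precisely what \cite[9.2]{Sh234} leaves unfinished. I expect (i) to be the main obstacle, and completing it --- together with the bookkeeping that guarantees the resulting $2^\lam$ models are genuinely $\lam$-saturated --- is what a proof of the conjecture ultimately comes down to.
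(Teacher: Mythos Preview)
The statement you are addressing is labeled a \emph{Conjecture} in the paper, not a theorem; the paper offers no proof, only the Discussion immediately preceding it explaining why the statement remains open. Your proposal is accordingly not a proof either, and you say so explicitly. What you have written is essentially a faithful expansion of that Discussion: the backward implication is the content of the theorem just before the conjecture; the forward implication splits into nulldimensionality (handled by Lemma~\ref{lem:nullthenpluscat}, exactly as you indicate) and stability of all u.l.a.\ $\mathcal P^-(n)$-systems. For the latter, you and the paper agree that the cases $n\le 3$ follow from \cite[\S2,\,4.3]{Sh234} with the saturated-model conclusion made explicit, while the general case depends on completing \cite[9.2]{Sh234} and verifying that its construction produces $\lam$-saturated models.

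Your outline of how that completion might go is reasonable but remains a plan, not an argument; the obstacles you flag---propagation of instability through higher-dimensional amalgamation, and preservation of the coding invariant under re-saturation at limits---are exactly the gaps the paper alludes to. In short: there is no proof in the paper to compare your proposal against, and your assessment of what is known versus what is missing matches the paper's own.
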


It would be interesting to know which ones of the implications in Corollary \ref{co:nullcatequiv} are reversible, and whether there are real gaps between the notions above. For example: is there a characterization of categoricity (or absolute categoricity) over $P$ for a countable theory $T$ using nulldimensionality and stability?

%

\bigskip
We conclude the paper with a few simple examples. 

\begin{ex}
\begin{enumerate}
\item

 Let $T$ be the complete theory of an infinite abelian group $G$, and $P$ a predicate picking out a subgroup. We have noted in the discussion preceding Corollary \ref{co:hodges}  that this example fits into our framework, and that our results (really, Lachlan's results) imply that $T$ is fully stable over $P$ and has full extension over $P$. 
 
 Now, if $P$ has finite index in $G$, then $T$ is categorical over $P$ (therefore nulldimensional). $T$ is generally not rigid over $P$, since the cosets can be permuted by an automorphism of $G$ while fixing $P$. 
 
 If $P$ has infinite index, $T$ is not nulldimensional (and not categorical): over every model $M$, there is a non-algebraic type weakly orthogonal to $P$, the type of a new coset element. On the other hand, the type of a new element in an existing coset is in $S_*(M)$. 
 
\item

Let $T$ be the theory of an equivalence relation $E$ with infinitely many infinite equivalence classes, $P$ picks exactly one element from each class. It is not hard to see that the hypotheses of this paper are satisfied. Like in the first example, $T$ is stable, hence is fully stable over $P$, and, by Lachlan's Theorem, has full existence. In fact, $T$ falls on the ``low'' side of all the classical classification theory dividing lines. Yet, given an uncountable model $N$ of $T^P$ (which is just an uncountable set), there are $2^{|N|}$ models of $M_i$ of $T$ (up to isomorphism fixing $N$) with  $P^{M_i} = N$. This is 
because every class of $E_0$ can be chosen to be either countable or uncountable (and an isomorphism over $P$ does not allow to exchange the classes). 

\item 

If, on the other hand, $Q = \cC\setminus P^\cC$ is our predicate in the previous example, then $T$ is categorical (hence unidimensional) over $Q$. 

\item

If, instead of an equivalence relation, we had a generic function from $Q$ onto $P$ (so every pre-image is infinite; or, rather, in order to fit the example into our framework, we replace the function with a binary relation $R$ defining a bipartite graph between $P$ and $Q$ with the appropriate properties), this would define an equivalence relation $E$ on $Q$ just as in Example (ii) above, and the conclusions would be the same. 

Considering $T$ over $Q$, however, poses a problem. It appears to be similar to Example (iii) above: e.g., there are no Vaughtian pairs over $Q$.  At the same time, the structure on $Q$ without the graph $R$ is just a set, and it is easy to see that for any infinite $\lam$ there are $2^\lam$ models over any model of $T^Q$. It is also easy to construct $2^\lam$ saturated models non-isomorphic over $Q$. This is because in this case Hypothesis \ref{asm:1} fails, so $T$ over $Q$ does not fall into our framework (and, as expected based on results in \cite{PiSh130}, this failure leads to non-structure). 
%

 \end{enumerate}

\end{ex}

\bibliography{common.bib}

\def\cprime{$'$}
\begin{thebibliography}{Mac97}

\bibitem[Afs14]{MR3343525}
Bijan Afshordel.
\newblock Generic automorphisms with prescribed fixed fields.
\newblock {\em J. Symb. Log.}, 79(4):985--1000, 2014.

\bibitem[Alz24]{Alz-thesis}
Anass Alzurba.
\newblock Shelah's proof of gaifman's conjecture.
\newblock {\em Master's Thesis}, The Hebrew University of Jerusalem, 2024.

\bibitem[CH99]{Chatzidakis1999ModelTO}
Zo{\'e} Chatzidakis and Ehud Hrushovski.
\newblock Model theory of difference fields.
\newblock {\em Transactions of the American Mathematical Society},
  351:2997--3071, 1999.

\bibitem[Cha23]{Chatzidakis2020RemarksAT}
Zo\'{e} Chatzidakis.
\newblock Remarks around the nonexistence of difference closure.
\newblock {\em Model Theory}, 2(2):405--427, 2023.

\bibitem[Gai74]{gaifman}
Haim Gaifman.
\newblock Operations on relational structures, functors and classes. {I}.
\newblock In {\em Proceedings of the {T}arski {S}ymposium ({P}roc. {S}ympos.
  {P}ure {M}ath., {V}ol. {XXV}, {U}niv. {C}alifornia, {B}erkeley, {C}alif.,
  1971)}, Proc. Sympos. Pure Math., Vol. XXV, pages 21--39. Published for the
  Association for Symbolic Logic by the American Mathematical Society,
  Providence, RI, 1974.

\bibitem[Hod99]{Hod-cat1}
Wilfrid Hodges.
\newblock Relative categoricity in abelian groups.
\newblock In {\em Models and computability ({L}eeds, 1997)}, volume 259 of {\em
  London Math. Soc. Lecture Note Ser.}, pages 157--168. Cambridge Univ. Press,
  Cambridge, 1999.

\bibitem[Hod02]{Hod-cat3}
Wilfrid Hodges.
\newblock Relative categoricity in linear orderings.
\newblock In {\em Logic and algebra}, volume 302 of {\em Contemp. Math.}, pages
  235--248. Amer. Math. Soc., Providence, RI, 2002.

\bibitem[Hod11]{hodges-gaifman}
Wilfrid Hodges.
\newblock Proof of {G}aifman's conjecture for relatively categorical abelian
  groups.
\newblock {\em An. \c{S}tiin\c{t}. Univ. ``Ovidius'' Constan\c{t}a Ser. Mat.},
  19(2):101--130, 2011.

\bibitem[HY09]{Hod-cat2}
Wilfrid Hodges and Anatoly Yakovlev.
\newblock Relative categoricity in abelian groups. {II}.
\newblock {\em Ann. Pure Appl. Logic}, 158(3):203--231, 2009.

\bibitem[KZ14]{KiZil}
Jonathan Kirby and Boris Zilber.
\newblock Exponentially closed fields and the conjecture on intersections with
  tori.
\newblock {\em Ann. Pure Appl. Logic}, 165(11):1680--1706, 2014.

\bibitem[Lac72]{Lachlan1972}
A.~Lachlan.
\newblock A property of stable theories.
\newblock {\em Fundamenta Mathematicae}, 77(1):9--20, 1972.

\bibitem[Mac97]{Macintyre1997GenericAO}
Angus Macintyre.
\newblock Generic automorphisms of fields.
\newblock {\em Ann. Pure Appl. Log.}, 88:165--180, 1997.

\bibitem[Pil83]{Pil-cat}
Anand Pillay.
\newblock {$\aleph _{0}$}-categoricity over a predicate.
\newblock {\em Notre Dame J. Formal Logic}, 24(4):527--536, 1983.

\bibitem[PS85]{PiSh130}
Anand Pillay and Saharon Shelah.
\newblock Classification theory over a predicate, {I}.
\newblock {\em Notre Dame J. Formal Logic}, 26(4):361--376, 1985.

\bibitem[She86]{Sh234}
Saharon Shelah.
\newblock Classification over a predicate, {II}.
\newblock In {\em Around classification theory of models}, volume 1182 of {\em
  Lecture Notes in Math.}, pages 47--90. Springer, Berlin, 1986.

\bibitem[SU]{ShUs322b}
Saharon Shelah and Alexander Usvyatsov.
\newblock Stability over a predicate and the {G}aifman property.
\newblock {\em preprint}.

\bibitem[SU22]{ShUs322a}
Saharon Shelah and Alexander Usvyatsov.
\newblock Classification over a predicate - the general case, part 1 -
  structure theory.
\newblock {\em arXiv: Logic}, 2022.

\bibitem[Usv]{Us25}
Alexander Usvyatsov.
\newblock On $\lam$-existence over a predicate.
\newblock {\em preprint}.

\bibitem[Usv24]{usvyatsov2024stability}
Alexander Usvyatsov.
\newblock Stability over a predicate and prime closure.
\newblock {\em arXiv: Logic}, 2024.

\end{thebibliography}
\bibliographystyle{alpha}

\Addresses

\end{document}